\newtheorem{thm}{Theorem}[section]
\newtheorem{lemma}[thm]{Lemma}
\newtheorem{prop}[thm]{Proposition}
\newtheorem{cor}[thm]{Corollary}
\newtheorem{defn}[thm]{Definition}
\newtheorem{ex}[thm]{Example}
\newtheorem{rmk}{Remark}
\newcommand{\Z}{{\mathbb{Z}}}
\newcommand{\R}{{\mathbb{R}}}
\newcommand{\C}{{\mathbb{C}}}
\newcommand{\Q}{{\mathbb{Q}}}
\newcommand{\K}{{\mathbf{k}}}
\newcommand{\T}{{\mathcal{T}}}
\newcommand{\Ho}{{\rm Hom}}
\newcommand{\colim}{{{\rm colim}\hspace{.2em}}}
\newcommand{\Sp}{{{\rm Sp}}}
\newcommand{\Hilb}{{{\rm Hilb}}}
\newcommand{\Trace}{{{\rm Trace}}}
\newcommand{\RR}{{\mathbb {R}}}
\newcommand{\NN}{{\mathbb {N}}}
\newcommand{\TTT}{{\mathcal{T}}}
\newcommand{\barE}{{\widetilde{E}}}
\def\ds{\displaystyle}
\title{On spaces of commuting elements in Lie groups}
\author{Frederick R. Cohen}
\address{Department of Mathematics, University of Rochester, Rochester, NY 14627}
\email{fred.cohen@rochester.edu}
\address{School of Mathematics, University of Minnesota, Minneapolis, MN 55455}
\email{reiner@math.umn.edu}
\author{Mentor Stafa}
\address{Department of Mathematics, Tulane University, New Orleans, LA 70118}
\email{mstafa@tulane.edu}
\date{\today}
\subjclass[2010]{Primary 22E99; Secondary 20G05.}
\dedicatory{\sc with an appendix by vic reiner}
\keywords{Keywords. space of commuting elements, space of representations, Lie group, representation theory, Molien's theorem}
\thanks{The first author partially supported by the Institute for Mathematics and its Applications.}
\thanks{Second author partially supported by DARPA grant number N66001-11-1-4132 during 2013-2015, and Swiss Government Excellence Scholarship (ESKAS No. 2015.0182) during 2015-2016.}
\thanks{Current address of second author: Departement Mathematik, ETH Z\"urich, 8092, Switzerland.}
\begin{document}

\begin{abstract}
The main purpose of this paper is to introduce a method to ``stabilize'' certain spaces of homomorphisms from finitely generated free abelian groups to a Lie group $G$, namely $\Ho(\mathbb Z^n,G)$. 
We show that this stabilized space of homomorphisms decomposes after suspending once with ``summands'' which can be reassembled, in a sense to be made precise below, into the individual spaces  $\Ho(\mathbb Z^n,G)$ after suspending once. To prove this decomposition,  a stable decomposition of an equivariant function space is also developed. 
One main result is that the topological space of all commuting elements in a compact 
Lie group is homotopy equivalent to an equivariant function space after inverting the order
of the Weyl group.
In addition, the homology of the stabilized space admits a very simple description in terms of the tensor algebra generated by the reduced homology of a maximal torus in favorable cases. The stabilized space also allows the description of the additive reduced homology of the individual spaces $\Ho(\mathbb Z^n,G)$, with the order of the Weyl group inverted.
\end{abstract}

\maketitle

\tableofcontents

\section{Introduction}\label{section:Introduction}

In this paper we introduce a new method of ``stabilizing'' spaces of homomorphisms $\Ho(\pi,G)$, where $\pi$ is a certain choice of finitely generated discrete group and $G$ is a compact and connected Lie group. The main results apply to $\Ho(\Z^n,G)$, the space of all ordered $n$-tuples of pairwise commuting elements in a compact Lie group $G$, by assembling these spaces into a single space for all $n \geq 0$.

The resulting space, denoted $Comm(G)$, is an infinite dimensional analogue of a Stiefel manifold which can be regarded as the space, suitably topologized, of all finite ordered sets of generators for all finitely generated abelian subgroups of $G$.
The methods are to develop the geometry and topology of the free associative monoid generated by a maximal torus of $G$, and to ``twist'' this  free monoid into a space which approximates the space of ``all commuting $n$-tuples'' for all $n$ into a single space. 
Topological properties of $Comm(G)$ are developed while the singular homology of this space is computed with coefficients in the ring of integers with the order of the Weyl group of $G$ inverted. One application is that the cohomology of $\Ho(\Z^n,G)$ follows from that of $Comm(G)$ for any cohomology theory.

The space of all commuting elements is usually homotopy equivalent to the equivariant function space $G \times_{NT} J(T)$ after inverting the order of the Weyl group, where $NT$ is the normalizer of a maximal torus $T$, and $J(T)$ is the free associative monoid generated by the maximal torus.

The results for singular homology of $Comm(G)$ are given in terms of the tensor algebra generated by the reduced homology of a maximal torus. Applications to classical Lie groups as well as exceptional Lie groups are given.  A stable decomposition of $Comm(G)$ is also given here with a significantly finer stable decomposition to be given in the sequel to this paper along with extensions of these constructions to additional representation varieties.

An appendix  by V.~Reiner is included which uses the results here concerning $Comm(G)$ together with Molien's theorem to give the Hilbert-Poincar\'e series of $Comm(G)$.

The next section provides a detailed list of the results in this paper.

\section{Summary of results }\label{section: Outline of results}

Let $G$ be a Lie group and let $\pi$ be a finitely generated discrete group. The set of homomorphisms $\Ho(\pi,G)$ can be topologized with the subspace topology of $G^m$ where $m$ is the number of generators of $\pi$.  The topology of the spaces $\Ho(\pi,G)$ has seen considerable recent development. For example, the case where $\pi$ is a finitely generated abelian group is closely connected to 
work of E. Witten \cite{witten1,witten2} which uses commuting pairs to construct quantum vacuum states in supersymmetric Yang-Mills theory. Further work was done by V.~Kac and A.~Smilga \cite{kac.smilga}.

Work of A.~Borel, R.~Friedman and J.~Morgan \cite{borel2002almost} addressed the special cases of commuting pairs and triples in compact Lie groups. Spaces of representations were studied by W.~Goldman \cite{goldman}, who investigated their connected components, for $\pi$ the fundamental group of a closed oriented surface and $G$ a finite cover of a projective special linear group.

For non-negative integers $n$, let $\Ho(\Z^n,G)$ denote the set of homomorphisms from a direct sum of $n$ copies of $\mathbb{Z}$ to $G$. This set can be identified as the subset of pairwise commuting $n$-tuples in $G^n$, so similarly it can be naturally topologized with the subspace topology in $G^n$.  Spaces given by $\Ho(\Z^n,G)$  have been the subject of substantial recent work.

In particular, A. \'Adem and F.~Cohen \cite{adem2007commuting} first studied these spaces in general, obtaining results for closed subgroups of $GL_n(\C)$. Their work was followed by work of T.~Baird \cite{bairdcohomology}, who studied ordinary and equivariant cohomology, Baird--Jeffrey--Selick \cite{bairdsu2}, \'Adem--G\'omez \cite{adem.gomez,adem.gomez2,adem.gomez3}, \'Adem--Cohen--G\'omez \cite{adem.cohen.gomez,adem.cohen.gomez2}, Sjerve--Torres-Giese \cite{giese.sjerve}, \'Adem--Cohen--Torres-Giese \cite{fredb2g}, Pettet--Suoto \cite{pettet.souto}, G\'omez--Pettet--Suoto \cite{gomez.pettet.souto}, Okay \cite{okay}. Most of this work has been focused on the study of invariants such as cohomology, $K$-theory, connected components, homotopy type and stable decompositions. Recently, D.~Ramras and S. Lawton \cite{ramras} used some of the above work to study character varieties.

Let $G$ be a reductive algebraic group and let $K \subseteq G$ be a maximal compact subgroup. A.~Pettet and J.~Souto \cite{pettet.souto} have shown that the inclusion $\Ho(\Z^n,K) \hookrightarrow \Ho(\Z^n,G)$ is a strong deformation retract, i.e. in particular the two spaces are homotopy equivalent. Thus this article will restrict to compact and connected Lie groups. Note that the free abelian group can also be replaced by any finitely generated discrete group. Assume $\Gamma$ is a finitely generated nilpotent group. Then M.~Bergeron \cite{bergeron} showed that the natural map
$$\Ho(\Gamma,K) \to \Ho(\Gamma,G)$$  is a homotopy equivalence. However, most of the results in this paper apply only to the cases of free abelian groups of finite rank.

The varieties of pairwise commuting $n$-tuples can be assembled into a useful single space which is roughly analogous to a Stiefel variety described as follows. Recall that the Stiefel variety over a field $\K$ $$V_{\K}(n,m)$$ may be regarded as a topological space of ordered $m$-tuples of generators for every $m$ dimensional vector subspace of $\K^n$, see \cite{hatcher2002algebraic}.

The purpose of this paper is to study an analogue of a Stiefel 
variety where $\K^n$ is replaced by a 
compact Lie group $G$, and the $m$-dimensional subspaces are replaced by a particular family of subgroups such as 
\begin{enumerate}
\item all finitely generated subgroups with at most $m$ generators which gives rise to  $\Ho(F_m,G)$, where $F_m$ denotes the free group with a basis of $m$ elements, or
\item all finitely generated abelian subgroups with at most $m$ generators, which gives rise to  $\Ho(\Z^m,G)$.

\end{enumerate}

The analogue of $(1)$ where $\pi$ runs over all finitely generated subgroups of nilpotence class at most $q$  with at most $k$ generators, or the analogue of $(2)$ where $\pi$ runs all finitely generated elementary abelian $p$-groups with at most $k$ generators will be addressed elsewhere. This article will focus mainly on properties of examples (1) and (2) as well as how these spaces make contact with classical representation theory.

\begin{defn}\label{definition: spaces of all commuting n-tuples}
Given a Lie group $G$ together with a free group $F_k$ on $k$ generators, the space of all homomorphisms $\Ho(F_k,G)$ is naturally homeomorphic to $G^k$ with the subspace $\Ho(\Z^k,G)$ topologized by the
subspace topology. Define $Assoc(G)$, $Comm(G)$, and $Comm(q,G)$ by the following constructions.
\begin{enumerate}
\item Let  $$Assoc(G) =  \bigsqcup_{0 \leq k < \infty}\Ho(F_k,G))/\sim$$ where $\sim$
denotes the equivalence relation obtained by deleting the identity element of $G$ 
in any coordinate of  $\Ho(F_k,G)$ for $ k \geq 1$ with $\Ho(F_0,G) = \{1_G\}$ a single point by convention. In addition, $Assoc(G)$ is given the
usual quotient topology obtained from the product topology on  $\Ho(F_k,G)$
where the relation is generated by requiring that 
$$(g_1,\dots, g_n) \sim (g_1,\dots, \widehat{g_i}, \dots, g_n)$$ 
if $g_i = 1_G \in G$.

\item Let $$Comm(G) =   \bigsqcup_{0 \leq k < \infty}\Ho(\Z^k,G))/\sim$$
topologized as a subspace of $Assoc(G)$, where $\sim$ is the same relation.

\item Let $$Comm(q,G) =   \bigsqcup_{0 \leq k < \infty}\Ho(F_k/\Gamma^q(F_k),G))/\sim$$
topologized as a subspace of $Assoc(G)$, where $\sim$ is the same relation, and $\Gamma^q(F_k)$
denotes the $q$-th stage of the descending central series for $F_k$.

\end{enumerate}

\end{defn}

This article is a study of the properties of the space $\Ho(\mathbb{Z}^n,G)$, for certain Lie groups and for all positive integers $n$ assembled into a single space, and is an exploration of properties of $Comm(G)$ regarded as a subspace of $Assoc(G)$. The main reason for introducing the spaces $Comm(G)$ is that (1) they have tractable, natural properties which (2) descend to give tractable properties of the varieties $\Ho(\mathbb{Z}^n,G)$ as addressed in this article.

Consider the free associative monoid generated by $G$ denoted $J(G)$, with the identity element in $G$ identified with the identity element in the free monoid  $J(G).$  This last construction is also known as the the \textit{James construction} or \textit{James reduced product} developed in \cite{james1955reduced} (see also Definition \ref{jamesdefn}). The first theorem is an observation which provides an identification of $Assoc(G)$ as well as a framework for $Comm(G)$; the proof is omitted as it is a natural inspection.

\begin{thm}\label{theorem: Assoc(G)}
Let $G$ be a  Lie group. The natural  map $$J(G) \to Assoc(G)$$ is a homeomorphism.
\end{thm}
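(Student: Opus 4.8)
The plan is to observe that $J(G)$ and $Assoc(G)$ are built from the same sequence of spaces by the same identifications, so that the ``natural map'' of the statement is simply the map induced by the standard homeomorphisms $\Ho(F_k,G)\cong G^k$. Recall that the James construction on a pointed space $(X,x_0)$ is $J(X)=\bigl(\bigsqcup_{k\ge 0}X^k\bigr)/\!\sim$, where $\sim$ is generated by $(x_1,\dots,x_k)\sim(x_1,\dots,\widehat{x_i},\dots,x_k)$ whenever $x_i=x_0$, equipped with the quotient topology (equivalently, the colimit topology of the filtration by the images $J_k(X)$ of $X^k$); here $X=G$ pointed at $1_G$, and $J(G)$ is the resulting free associative monoid with $1_G$ as unit, see \cite{james1955reduced}.

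First I would use that, by definition, $\Ho(F_k,G)$ is topologized via the evaluation homeomorphism $e_k\colon\Ho(F_k,G)\to G^k$, $\varphi\mapsto(\varphi(t_1),\dots,\varphi(t_k))$, for a fixed basis $t_1,\dots,t_k$ of $F_k$. Choosing these bases compatibly with the standard inclusions $F_{k-1}\hookrightarrow F_k$ that omit one generator, one checks the only combinatorial fact needed: $\varphi(t_i)=1_G$ exactly when $e_k(\varphi)$ has $i$-th coordinate $1_G$, and in that case the corresponding restriction of $\varphi$ matches, under $e_{k-1}$, the deletion of that coordinate. Consequently $\bigsqcup_k e_k$ carries the relation $\sim$ on $\bigsqcup_k\Ho(F_k,G)$ used in Definition~\ref{definition: spaces of all commuting n-tuples} bijectively onto the relation $\sim$ on $\bigsqcup_k G^k$ defining $J(G)$, so it descends to a bijection $Assoc(G)\to J(G)$; its inverse is exactly the natural map $J(G)\to Assoc(G)$ of the statement, which is therefore a bijection.

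It remains to match topologies, which is the only step needing (slight) care. Both $Assoc(G)$ and $J(G)$ carry the quotient topology from the disjoint union of their level spaces, and each $e_k$ is a homeomorphism; moreover, since $\{1_G\}$ is closed in the Hausdorff group $G$, the filtration maps $G^k\hookrightarrow G^{k+1}$ (and likewise those for $\Ho(F_k,G)$) are closed embeddings, which is the hypothesis under which the quotient topology on $\bigsqcup_k X^k/\!\sim$ is known to agree with the usual colimit topology on the James filtration. Granting this, a set $U\subseteq Assoc(G)$ is open iff its preimage in each $\Ho(F_k,G)$ is open iff, transported by $e_k$, the corresponding subset of $J(G)$ has open preimage in each $G^k$; hence both the natural map and its inverse are continuous. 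I expect this bookkeeping with the quotient versus colimit topologies to be the only obstacle --- and the reason the authors dismiss the proof as a ``natural inspection'' --- since once the topologies are identified the two constructions are verbatim the same.
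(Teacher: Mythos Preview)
Your proposal is correct and is precisely the ``natural inspection'' the paper alludes to; the authors omit the proof entirely, remarking only that it is a natural inspection, so your argument spells out exactly what they leave to the reader. There is nothing to compare, since the paper gives no alternative route.
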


One application of the current paper is to give explicit features of the singular cohomology or homology of
$Comm(G)$ for any compact and connected Lie group $G$ with the order of the Weyl group inverted.
For example, one of the results below gives that the cohomology of $Comm(G)$ with the order of the Weyl group inverted is  the fixed points of the Weyl group $W$ acting naturally on $H^*(G/T) \otimes \T^*[V]$, where $\T^*[V]$ denotes the dual of the tensor algebra $\T[V]$ generated by the reduced homology of a maximal torus. 
Furthermore, if ungraded singular homology is considered, the total answer in singular homology is given explicitly in terms of classical tensor algebras generated by the reduced homology of a maximal torus.

The singular homology groups of each individual space $\Ho(\mathbb{Z}^n,G)$  are then formally given in terms of those of $Comm(G)$ as described below. To give the explicit answers, a tighter hold on fixed points of the action of $W$ on
$H^*(G/T) \otimes \T^*[V]$ 
is required.  In particular, Molien's theorem can be applied to work out the fixed points in $H^*(G/T;\C) \otimes \T^*[V \otimes_{\Z} \C]$, the resulting cohomology groups with complex coefficients $\C$ as given in the appendix.

In the ungraded setting, the homology groups obtained from classical representation theory simplify greatly, and the ungraded answers follow directly. This simplification is illustrated by the explicit answers for the cases of $U(n)$, $SU(n)$, $Sp(n)$, $Spin(n)$, $G_2, F_4, E_6, E_7, E_8$, reflecting the fact that these global computations for $Comm(G)$ are both accessible, and are exhibiting natural combinatorics. These specific answers are given in sections 
\ref{section:Un.SUn}, and \ref{section:exceptional Lie groups}.

This information depends on the construction given next.

\begin{defn}\label{definition: twist J(T)}
Given a Lie group $G$ together with a maximal torus $T\subset G$, form the free associative monoid generated by $T$, denoted $J(T) = Assoc(T)$, where the identity of $T$ is identified with the identity of $Assoc(T)$. The normalizer of $T$ in $G$, denoted $NT$, acts on $T$, and by natural diagonal extension there is an action of $NT$ on $Assoc(T)$. The normalizer $NT$ thus acts diagonally on the product $G \times Assoc(T)$. The orbit space 
$$G \times_{NT}Assoc(T)$$ is useful in what follows next.
\end{defn}

\begin{defn}\label{definition: G to Comm(G)}
Given a Lie group $G$, define  $$E:G \to Comm(G)$$ by 
$$E(g)=[g],$$ 
the natural equivalence class given by the image of $g \in G  = \Ho(\Z,G)$ in $Comm(G)$. Let 
$$Comm(G)_{1_G}$$ 
denote the path-component of $E(1_G) \in Comm(G)$. Let 
$$\Ho(\Z^m,G)_{1_G} $$ 
denote the path-component of $({1_G}, \dots, 1_G) \in \Ho(\Z^m,G) $.
\end{defn}

The next Proposition follows at once from \cite{adem2007commuting} and Definition \ref{definition: spaces of all commuting n-tuples}.
\begin{prop}\label{prop: connected Comm(G)}
Let $G$ be a  Lie group. If every abelian subgroup of $G$ is contained in a maximal torus, then
both  $\Ho(\mathbb{Z}^n,G)$, and $Comm(G)$ are path-connected.
\end{prop}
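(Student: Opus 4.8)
The plan is to reduce the connectedness of both $\Ho(\mathbb{Z}^n,G)$ and $Comm(G)$ to a single fact: under the hypothesis, every commuting $n$-tuple lies in the image of the map $G/T \times T^n \to \Ho(\mathbb{Z}^n,G)$ sending $(gT, t_1,\dots,t_n)$ to $(gt_1g^{-1},\dots,gt_ng^{-1})$. First I would recall from \cite{adem2007commuting} the standard argument: given pairwise commuting $g_1,\dots,g_n \in G$, the subgroup $A = \langle g_1,\dots,g_n\rangle$ is abelian, hence by hypothesis $A \subseteq gTg^{-1}$ for some $g\in G$; thus each $g_i = g t_i g^{-1}$ with $t_i \in T$, so the tuple lies in the image of the above surjection. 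Since $G/T \times T^n$ is a product of connected spaces (the torus $T$ is connected, and $G/T$ is connected because $G$ is connected), it is path-connected, and therefore its continuous image $\Ho(\mathbb{Z}^n,G)$ is path-connected. This handles the first assertion.

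For $Comm(G)$, I would use the quotient description from Definition \ref{definition: spaces of all commuting n-tuples}: $Comm(G) = \bigl(\bigsqcup_{k\ge 0}\Ho(\mathbb{Z}^k,G)\bigr)/\!\sim$, and the canonical map $q_k:\Ho(\mathbb{Z}^k,G)\to Comm(G)$ is continuous for each $k$. Each $q_k(\Ho(\mathbb{Z}^k,G))$ is path-connected by the previous paragraph, and each of these pieces contains the basepoint $[1_G] = q_0(\ast) = q_k(1_G,\dots,1_G)$, since deleting all identity coordinates collapses $(1_G,\dots,1_G)$ to the class of the empty tuple. A union of path-connected sets sharing a common point is path-connected, and every point of $Comm(G)$ lies in some $q_k(\Ho(\mathbb{Z}^k,G))$ by construction; hence $Comm(G)$ is path-connected.

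The only real subtlety — and where I would be most careful — is verifying that the basepoint $[1_G]$ genuinely lies in the image $q_k(\Ho(\mathbb{Z}^k,G))$ for every $k$, i.e. that the equivalence relation $\sim$ identifies $(1_G,\dots,1_G)$ (with $k$ entries) to the single point $\Ho(\mathbb{Z}^0,G)$. This is immediate from the relation $(g_1,\dots,g_n)\sim(g_1,\dots,\widehat{g_i},\dots,g_n)$ when $g_i = 1_G$, applied $k$ times, together with the convention $\Ho(F_0,G)=\{1_G\}$; so this step is routine once the definitions are unwound. A minor point worth noting is that the hypothesis "every abelian subgroup is contained in a maximal torus" is exactly what makes the surjection onto $\Ho(\mathbb{Z}^n,G)$ hold — without it (e.g. for $G = SO(3)$, where $(\mathbb{Z}/2)^2$ is not in any torus) the space can be disconnected, which is why the hypothesis cannot be dropped.
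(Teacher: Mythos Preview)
Your argument is correct and is precisely what the paper has in mind: the paper gives no detailed proof, merely stating that the proposition ``follows at once from \cite{adem2007commuting} and Definition~\ref{definition: spaces of all commuting n-tuples},'' and you have spelled out exactly that reduction. The only cosmetic addition one might make is to note that the hypothesis already forces $G$ to be connected (every $g\in G$ lies in a maximal torus, hence in the identity component), which justifies your appeal to connectedness of $G/T$.
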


Standard examples of Proposition \ref{prop: connected Comm(G)} are $U(n)$, $SU(n)$, and $Sp(n)$.
On the other hand if $G = SO(n)$ for $n >2$, then $Comm(G)$ fails to be path-connected.

Namely, it follows from the work below that translates of $ Assoc(T)$ in $Assoc(G)$ under the natural action of the Weyl group give a ``good'' approximation for the space of all commuting $n$-tuples, $Comm(G)$. This process is identifying commuting $n$-tuples in terms of subspaces of $Assoc(G)$. Furthermore, the space $Comm(G)$ can be regarded as the space of all ordered finite sets of generators for all finitely generated abelian subgroups of $G$ modulo the single relation that the identity element $1_G$ of $G$ can be omitted in any set of generators.

The next step is to see how $Comm(G)$ relates to earlier classical constructions. In particular, $G \times_{NT}Assoc(T)$ maps  naturally to $Comm(G)$ and is an infinite dimensional analogue
of the natural map $$\theta: G \times_{NT}T \to G$$ given by $$\theta(g,t) = gtg^{-1}= t^g,$$
with more details given in Section \ref{section:x2g}.

\begin{defn}\label{definition: Borel to Comm(G)}
Define $$\Theta: G \times_{NT}Assoc(T) \to Comm(G)$$ by the formula
$$\Theta(g, (t_1,\dots, t_n)) = (t_1^g,\dots, t_n^g)$$ for all $n$.

\end{defn}

The space $Comm(G)$ is ``roughly" the union of orbits of $Assoc(T)$ in $Assoc(G)$ as will be seen below. The next step is a direct observation with proof omitted.
\begin{prop}\label{prop: continuous map to Comm(G)}
Let $G$ be a compact connected Lie group with maximal torus $T$ and Weyl group $W$.  The map
$$\Theta: G \times_{NT}Assoc(T) \to Comm(G)$$
is well-defined and continuous. Furthermore, there is a commutative diagram

\[
\begin{CD}
G \times_{NT}T  @>{\theta}>> G     \\
 @V{1 \times E}VV    @VV{E}V       \\
G \times_{NT}Assoc(T)  @>{\Theta}>> Comm(G)\\
 @V{1}VV    @VV{\subseteq}V       \\
G \times_{NT}Assoc(T)  @>{\Theta}>> Assoc(G).\\
\end{CD}
\] 

In addition, the map $$\Theta: G \times_{NT} Assoc(T) \to Comm(G)$$
factors through one path-component as
$\Theta: G \times_{NT} Assoc(T) \to Comm(G)_{1_G}$, and
there is a commutative diagram for all $n \geq 0$ given as follows:

\[
\begin{CD}
G \times T^m @>{\theta_m}  >> \Ho(\Z^m,G)_{1_G} \\
 @VV{}V    @VV{}V       \\
G \times_{NT}Assoc(T)  @>{\Theta}>> Comm(G)_{1_G}.\\
\end{CD}
\] 

\end{prop}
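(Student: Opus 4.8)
The plan is to verify the three assertions in the Proposition in turn, each of which reduces to unwinding the definitions of $\theta$, $E$, $\Theta$ and the equivalence relation $\sim$ used to build $Assoc(G)$ and $Comm(G)$. For the first diagram, I would start with the lower square. Chasing $(g,t)\in G\times_{NT}T$ around both ways: across the bottom then down the right gives $t^g$ viewed inside $Assoc(G)$ via the one-coordinate inclusion $G=\Ho(F_1,G)\hookrightarrow Assoc(G)$, while down the left then across gives $\Theta(g,(t))=(t^g)$, the length-one word; these agree by Theorem~\ref{theorem: Assoc(G)} and the convention identifying $1_G$ in both monoids. For the upper square one does the same chase with the target refined to $Comm(G)$: $E\circ\theta(g,t)=[t^g]=\Theta(1\times E)(g,t)$, where I must observe that $1\times E$ sends $(g,t)$ to $(g,(t))\in G\times_{NT}Assoc(T)$, using that $E(t)=[t]$ is the length-one word $(t)$. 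Commutativity of the outer rectangle is then automatic. Well-definedness and continuity of $\Theta$ are already granted by Proposition~\ref{prop: continuous map to Comm(G)}'s hypotheses (or can be taken as part of what is being asserted), so I would cite that; the only thing to check is compatibility with $\sim$, namely that deleting a coordinate $t_i=1_T$ upstairs corresponds to deleting $t_i^g=1_G$ downstairs, which is immediate since conjugation fixes $1_G$.

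Next I would prove the factorization through a single path-component. The key point is that $G\times_{NT}Assoc(T)$ is path-connected: $G$ is connected and $Assoc(T)=J(T)$ is the James construction on the torus $T$, which is path-connected (it is an $H$-space with $J_0=\{*\}$, and every point $(t_1,\dots,t_n)$ is joined to the basepoint by a path contracting each $t_i$ to $1_T$ along a path in $T$). Since $\Theta$ is continuous and $\Theta$ sends the basepoint $[(g,\ast)]$ to $E(1_G)$, the image lands entirely in $Comm(G)_{1_G}$, giving the asserted factorization $\Theta\colon G\times_{NT}Assoc(T)\to Comm(G)_{1_G}$.

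Finally, for the second displayed diagram I would fix $m\ge 0$, let $\theta_m\colon G\times T^m\to\Ho(\Z^m,G)$ be the simultaneous-conjugation map $\theta_m(g,(t_1,\dots,t_m))=(t_1^g,\dots,t_m^g)$ — note its image lies in the connected component $\Ho(\Z^m,G)_{1_G}$ by the same path-connectedness argument — and take the left vertical to be the composite $G\times T^m\to G\times Assoc(T)\to G\times_{NT}Assoc(T)$ sending $(g,(t_1,\dots,t_m))$ to the class of $(g,(t_1,\dots,t_m))$, and the right vertical to be the natural map $\Ho(\Z^m,G)_{1_G}\to Comm(G)_{1_G}$ induced by the defining quotient $\bigsqcup_k\Ho(\Z^k,G)\to Comm(G)$. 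Commutativity is then the identity $\Theta(g,(t_1,\dots,t_m))=(t_1^g,\dots,t_m^g)$ read in $Comm(G)$, which is exactly Definition~\ref{definition: Borel to Comm(G)}. The main obstacle — really the only nontrivial point — is being careful with the quotient topology: one must check that the maps labelled as verticals are genuinely well-defined and continuous after passing to $\sim$ and to the $NT$-orbit space, i.e. that a coordinate $t_i=1_T$ may be deleted compatibly on both sides and that the $NT$-action on $G\times Assoc(T)$ is carried to simultaneous conjugation. Both are routine, since $NT$ acts on $T$ by automorphisms (so fixes $1_T$) and conjugation in $G$ fixes $1_G$; everything else is formal diagram-chasing, which is presumably why the authors mark the statement as a ``direct observation.''
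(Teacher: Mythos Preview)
Your proposal is correct, and indeed the paper explicitly omits the proof, calling the proposition ``a direct observation with proof omitted.'' Your verification---checking that $\Theta$ respects both the $NT$-action and the relation $\sim$ (since conjugation by $g$ sends $1_T$ to $1_G$), that the domain is path-connected because $G$ and $J(T)$ are, and that the diagrams commute by unwinding Definition~\ref{definition: Borel to Comm(G)}---is exactly the routine check the authors intend. One small wording issue: you refer to the ``lower square'' of the first diagram while chasing an element of $G\times_{NT}T$, which actually lives in the top row; the lower square (identity on the left, inclusion $Comm(G)\subset Assoc(G)$ on the right) is tautological, and what you describe is the upper square together with the outer rectangle. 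Also, citing the Proposition itself for well-definedness is circular, but you immediately supply the real argument (compatibility with $\sim$ and with the $NT$-action), so no harm is done.
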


A second useful naturality result concerning $Comm(G)$ is satisfied for certain morphisms of Lie groups which preserve a choice of maximal torus.  The next proposition is again an inspection of the definitions.
\begin{prop}\label{prop: naturality}

Let $T_H$ and $T_G$ be maximal tori of compact Lie groups $H$ and $G$, respectively. If a continuous homomorphism $$f: H \to G$$  is a morphism of Lie groups such that $f(T_H) \subset T_G$, then there is an induced commutative diagram
\[
\begin{CD}
H \times_{N_{T_H}}Assoc(T_H)  @>{\Theta}>> Comm(H)\\
 @V{f \times Assoc(f)}VV    @VV{Comm(f)}V       \\
G \times_{N_{T_G}}Assoc(T_G)  @>{\Theta}>> Comm(G).\\
\end{CD}
\] 

\end{prop}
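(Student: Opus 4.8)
The plan is to unwind the definitions of all four maps in the square and check commutativity directly on points; the Proposition asserts nothing deeper than the compatibility of the twisting construction with maps of Lie groups preserving maximal tori, so the work is entirely formal once the constituent maps are pinned down. First I would record what $Assoc(f)$ and $Comm(f)$ mean: since $f$ is a continuous homomorphism it restricts to $f^k \colon \Ho(F_k,H) = H^k \to G^k = \Ho(F_k,G)$, and because $f(1_H) = 1_G$ these maps are compatible with the equivalence relation $\sim$ of Definition \ref{definition: spaces of all commuting n-tuples} (deleting an identity coordinate), so they assemble to a continuous map $Assoc(f) \colon Assoc(H) \to Assoc(G)$; as $f$ carries pairwise commuting tuples to pairwise commuting tuples it restricts to $Comm(f) \colon Comm(H) \to Comm(G)$. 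Applying this with $H = T_H$ and $G = T_G$, and using $f(T_H) \subset T_G$, gives $Assoc(f) \colon Assoc(T_H) \to Assoc(T_G)$.

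Next I would check that the vertical map $f \times Assoc(f)$ descends to the Borel-type quotients. Because $f(T_H) \subset T_G$ and $f$ is a homomorphism, $f$ sends $N_{T_H}$ into $N_{T_G}$: if $n \in N_{T_H}$ then $f(n)\, T_G\, f(n)^{-1} \supseteq f(n\, T_H\, n^{-1}) = f(T_H)$, which forces $f(n) \in N_{T_G}$ by maximality of $T_G$ (equivalently, $f(n)$ normalizes the identity component of $f(N_{T_H})^{T_G}$, hence $T_G$). Moreover the $N_{T_H}$-action on $Assoc(T_H)$ is the diagonal extension of conjugation, and $Assoc(f)$ intertwines it with the $N_{T_G}$-action via the homomorphism $N_{T_H} \to N_{T_G}$, since $f(n t n^{-1}) = f(n) f(t) f(n)^{-1}$ coordinatewise. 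Hence $(g, x) \mapsto (f(g), Assoc(f)(x))$ is equivariant over $N_{T_H} \to N_{T_G}$ and descends to a continuous map $H \times_{N_{T_H}} Assoc(T_H) \to G \times_{N_{T_G}} Assoc(T_G)$.

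Finally I would verify the square commutes on a representative point $[g, (t_1, \dots, t_n)]$. Going right then down, $\Theta$ gives $(t_1^g, \dots, t_n^g) \in Comm(H)$ and then $Comm(f)$ gives $(f(t_1^g), \dots, f(t_n^g))$. Going down then right, $f \times Assoc(f)$ gives $[f(g), (f(t_1), \dots, f(t_n))]$ and then $\Theta$ gives $(f(t_1)^{f(g)}, \dots, f(t_n)^{f(g)})$. Since $f$ is a homomorphism, $f(t_i^g) = f(g t_i g^{-1}) = f(g) f(t_i) f(g)^{-1} = f(t_i)^{f(g)}$, so the two composites agree, and both land in $Comm(G)$ because $f$ preserves commutativity of tuples. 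Continuity of all four maps has been noted along the way, so the diagram commutes in the category of topological spaces.

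The only point requiring any care — and the one I would flag as the main obstacle, though it is mild — is the claim $f(N_{T_H}) \subseteq N_{T_G}$, which uses both that $f$ is a homomorphism and the maximality of $T_G$; everything else is a routine diagram chase on points. If one wanted, one could instead simply observe that conjugation-equivariance of $Assoc(f)$ over the group homomorphism $N_{T_H} \to \Aut(T_G)$ landing in the image of $N_{T_G}$ is automatic from $f$ being a homomorphism, and sidestep the maximality argument by working with the Weyl group actions directly.
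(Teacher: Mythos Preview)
The paper does not prove this proposition, declaring it ``an inspection of the definitions.'' Your verification that the square commutes on representatives is correct, as is your description of $Assoc(f)$ and $Comm(f)$. However, the step you single out as requiring care --- that $f(N_{T_H}) \subseteq N_{T_G}$ --- does not follow from the stated hypotheses, and your argument for it is wrong: you note that $f(n)\,T_G\,f(n)^{-1}$ is a maximal torus of $G$ containing $f(T_H)$ and then conclude it must equal $T_G$ ``by maximality,'' but $T_G$ is not in general the \emph{unique} maximal torus of $G$ containing the subtorus $f(T_H)$, so no such conclusion is available.

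For a concrete counterexample take $H = SU(2)$, $G = SU(4)$ with diagonal maximal tori, and let $\phi(g) = g \otimes I_2$ in a basis ordered so that $\phi(T_H) = S := \{\mathrm{diag}(s,s,s^{-1},s^{-1})\} \subset T_G$. The centralizer $Z_G(S) = S(U(2)\times U(2))$ properly contains $T_G$; choose $u = \mathrm{diag}(I_2,B) \in Z_G(S)$ with $B \in SU(2)$ not a monomial matrix, and set $f := u\,\phi(\,\cdot\,)\,u^{-1}$. Then $f(T_H) = uSu^{-1} = S \subset T_G$, but for the Weyl representative $w \in N_{T_H}$ one computes that $f(w)$ is the block matrix with off-diagonal blocks $B^{-1}$ and $-B$, which is not monomial, so $f(w) \notin N_{T_G}$. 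Thus the left vertical arrow $f \times Assoc(f)$ does not descend to the $N_T$-quotients, and the proposition as stated (hence also the paper's assertion that it is routine) evidently needs an additional hypothesis --- for instance $f(N_{T_H}) \subset N_{T_G}$, or $Z_G(f(T_H)) = T_G$. Your proposed ``sidestep'' via Weyl group actions fails for the same reason: conjugation by $f(n)$ need not carry $T_G$ to itself, so there is no induced homomorphism $N_{T_H} \to \Aut(T_G)$ to work with.
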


The next feature is that the spaces $G \times_{NT}Assoc(T)$, and $Comm(G)$ admit stable decompositions which are 
compatible as well as provide simple, direct computations.

\begin{defn}\label{definition: singular subspaces of commuting n-tuples}
Given a Lie group $G$, define  
$$S_k(G) =S(\Ho(\Z^k,G)) \subset \Ho(\Z^k,G)$$ 
as those $k$-tuples for which at least one coordinate is equal to the identity element $1_G \in G$. Define $$\widehat{\Ho}(\Z^k,G) = \Ho(\Z^k,G)/S_k(G).$$
\end{defn} 

\begin{rmk}
Note that some of the theorems in this paper require that the inclusions $S_k(G)\hookrightarrow \Ho(\Z^k,G)$ are cofibrations. For this to be true it suffices to assume that $G$ is a closed subgroup of $GL_n(\C)$ (see \cite[Theorem 1.5]{adem2007commuting}), and we assume this implicitly whenever necessary.
\end{rmk}

In what follows, $\Sigma(X)$ denotes the suspension of the pointed space $X$.
The following theorem was proven in \cite{adem2007commuting}:
\begin{thm}\label{theorem: stable splitting of Hom(Zn,G)}
Let $G$ be a  Lie group. Then there are homotopy equivalences

$$\Sigma(\widehat{\Ho}(\Z^n,G)) \vee \Sigma(S_n(G)) \to \Sigma(\Ho(\Z^n,G)),$$
and
$$\bigvee_{1 \leq j \leq n}\bigvee_{\binom {n}{j}}\Sigma(\widehat{\Ho}(\Z^j,G)) \to \Sigma(\Ho(\Z^n,G)).$$

\end{thm}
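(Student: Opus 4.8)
The plan is to reduce both equivalences to a single standard fact about suspensions of spaces built from products with pieces of the product identified. The starting point is that $\Ho(\Z^n,G)$ sits inside $G^n = \Ho(F_n, G)$ as a closed subvariety, and $S_n(G) = S(\Ho(\Z^n,G))$ is precisely the intersection of $\Ho(\Z^n,G)$ with the ``fat wedge'' $FW_n(G) \subset G^n$ consisting of tuples with at least one coordinate equal to $1_G$. So the first equivalence is the assertion that, after one suspension, the cofiber sequence $S_n(G) \hookrightarrow \Ho(\Z^n,G) \to \widehat{\Ho}(\Z^n,G)$ splits. By the remark preceding the statement, the inclusion $S_n(G) \hookrightarrow \Ho(\Z^n,G)$ is a cofibration when $G \subseteq GL_m(\C)$, so $\widehat{\Ho}(\Z^n,G) = \Ho(\Z^n,G)/S_n(G)$ has the correct (pointed) homotopy type and the Puppe sequence applies.

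First I would establish the splitting of the first cofiber sequence after suspension. The key is to produce a stable retraction $\Sigma \Ho(\Z^n, G) \to \Sigma S_n(G)$; equivalently a map $\Sigma \widehat{\Ho}(\Z^n,G) \to \Sigma \Ho(\Z^n, G)$ splitting the quotient. This is the exact analogue of the classical James-style splitting $\Sigma(X_1 \times \cdots \times X_n) \simeq \bigvee_{S \subseteq [n]} \Sigma\bigl(\bigwedge_{i \in S} X_i\bigr)$, whose proof uses only the co-H-structure on a suspension together with the diagonal-collapse maps. Here one cannot use the full product, but one can still use the $n$ ``partial collapse'' maps: for each nonempty $S \subseteq \{1,\dots,n\}$ there is a continuous map $\Ho(\Z^n,G) \to \Ho(\Z^{|S|},G)$ remembering the coordinates in $S$ and setting the others to $1_G$ — this lands in $\Ho(\Z^{|S|},G)$ because setting coordinates to the identity preserves commutativity — and it carries $S_n(G) \cup (\text{tuples with some coordinate in } S \text{ equal to } 1_G)$ into $S_{|S|}(G)$, hence descends to $\widehat{\Ho}(\Z^n,G) \to \widehat{\Ho}(\Z^{|S|},G)$. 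Assembling these over all $S$ and composing with the co-H-comultiplication on $\Sigma$ gives a map $\Sigma \Ho(\Z^n,G) \to \bigvee_{\emptyset \ne S \subseteq [n]} \Sigma \widehat{\Ho}(\Z^{|S|},G)$; one then checks it is a homotopy equivalence, e.g. by a homology/Blakers--Massey argument or by induction on $n$, using the cofiber sequence for $S_n(G)$ and the inductive description $S_n(G)$ as a union of copies of $\Ho(\Z^{n-1},G)$. The second stated equivalence is the immediate consequence of grouping the wedge summands by $|S| = j$, giving $\binom{n}{j}$ copies of $\Sigma\widehat{\Ho}(\Z^j,G)$ for each $j$, together with the observation that the $j=0$ term is a point; the first equivalence is the same statement split off one summand at a time.

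The main obstacle is verifying that the assembled collapse map is actually a homotopy equivalence rather than merely inducing an isomorphism on homology — i.e. upgrading from a stable homology statement to the stated homotopy equivalence of suspensions. The cleanest route is induction on $n$: the base case $n=1$ is trivial ($S_1(G) = \{1_G\}$, so $\widehat{\Ho}(\Z,G) = G_+/\{1_G\}$ and $\Sigma \Ho(\Z,G) \simeq \Sigma G$), and the inductive step uses that $S_n(G) = \bigcup_{i=1}^n \{g_i = 1_G\}$ with each piece homeomorphic to $\Ho(\Z^{n-1},G)$ and intersections of $k$ pieces homeomorphic to $\Ho(\Z^{n-k},G)$; feeding the inductive splitting into the cofiber sequence $\Sigma S_n(G) \to \Sigma \Ho(\Z^n,G) \to \Sigma \widehat{\Ho}(\Z^n,G)$ and checking the partial-collapse maps provide a compatible splitting on the nose. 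Since this theorem is quoted from \cite{adem2007commuting}, in the write-up I would either cite that proof directly or sketch the James-splitting argument above, emphasizing only the two points that are not quite formal: that partial collapses preserve commutativity (so land in the right $\Ho(\Z^k,G)$), and that the inclusions $S_k(G) \hookrightarrow \Ho(\Z^k,G)$ are cofibrations so that the quotients have the right homotopy type.
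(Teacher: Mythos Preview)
The paper does not prove this theorem itself; it simply cites \cite{adem2007commuting}. Your sketch is correct and is essentially the argument given there: the projections $\Ho(\Z^n,G)\to\Ho(\Z^{|S|},G)$ onto subsets $S\subseteq[n]$ preserve commutativity, descend to the quotients $\widehat{\Ho}$, and after one suspension assemble via the co-$H$ structure into the required equivalence, with the induction on $n$ (or a direct homology check via the cofibration $S_n(G)\hookrightarrow\Ho(\Z^n,G)$) verifying that the assembled map is a homotopy equivalence rather than merely a homology isomorphism. One minor wording point: the map to $\Ho(\Z^{|S|},G)$ is the projection that \emph{forgets} the coordinates outside $S$, not one that ``sets them to $1_G$''; the latter phrasing describes the section in the other direction.
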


The first result concerning the suspension of $Comm(G)$ is as follows.
\begin{thm}\label{thm:stable decomp f x2g}
Let $G$ be a  Lie group. Then there is a homotopy equivalence
$$\Sigma(Comm(G)) \to \bigvee_{n \geq 1}\Sigma(\widehat{\Ho}(\Z^n,G)).$$ 
\end{thm}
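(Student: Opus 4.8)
The plan is to mimic the classical James-type stable splitting for the James construction $J(X) \simeq \bigvee_{n \geq 1} \Sigma X^{\wedge n}$, but adapted to the twisted ``sub-James'' object $Comm(G) \subset Assoc(G) = J(G)$. By Theorem~\ref{theorem: Assoc(G)}, $Assoc(G) = J(G)$ and $Comm(G)$ is the subspace built from the pieces $\Ho(\Z^k,G) \subset G^k = \Ho(F_k,G)$ under the word-reduction relation $\sim$. So $Comm(G)$ carries a natural filtration $F_1 \subset F_2 \subset \cdots$ where $F_n$ is the image of $\bigsqcup_{k \leq n} \Ho(\Z^k,G)$; this is the restriction of the James filtration on $J(G)$. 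The associated graded of the James filtration on $J(X)$ is well known to have $n$-th subquotient $F_n/F_{n-1} \cong X^{\wedge n}$; here the analogous computation gives $F_n/F_{n-1} \cong \Ho(\Z^n,G)/S_n(G) = \widehat{\Ho}(\Z^n,G)$, since collapsing $F_{n-1}$ amounts precisely to collapsing all $n$-tuples with a coordinate equal to $1_G$ (these are exactly the $n$-tuples that already lie, after reduction, in a shorter word).

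First I would make the filtration and its subquotients precise, checking that the inclusions $F_{n-1} \hookrightarrow F_n$ are cofibrations — this is where the standing hypothesis from the Remark after Definition~\ref{definition: singular subspaces of commuting n-tuples} (that $G \subset GL_n(\C)$, so that $S_k(G) \hookrightarrow \Ho(\Z^k,G)$ is a cofibration) is used, together with the fact that the James-filtration inclusions are built out of these. Next I would invoke the standard fact that a filtered space whose filtration inclusions are cofibrations and whose subquotients are ``nice'' (well-pointed) splits after one suspension as $\Sigma X \simeq \bigvee_n \Sigma (F_n/F_{n-1})$, provided one can produce compatible stable retractions $\Sigma F_n \to \Sigma F_{n-1}$. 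For the James construction these retractions come from the multiplication/``combinatorial'' maps; the key point is that the same maps restrict to $Comm(G)$ because the defining relation on $Comm(G)$ is the same word-reduction relation used on $J(G)$, and the subspaces $\Ho(\Z^k,G)$ are closed under passing to sub-tuples. Concretely, one uses the stable splitting of the James construction together with the observation that the splitting is natural for sub-James constructions of this combinatorial type; alternatively one can cite Theorem~\ref{theorem: stable splitting of Hom(Zn,G)} applied compatibly across all $n$ and pass to the (co)limit, since $Comm(G) = \colim_n F_n$ and suspension commutes with the relevant colimits.

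The cleanest route is probably the second one: assemble the maps $\Sigma \widehat{\Ho}(\Z^n,G) \to \Sigma \Ho(\Z^n,G)$ and $\Sigma \Ho(\Z^n,G) \to \Sigma F_n \to \Sigma Comm(G)$ (the latter from the filtration inclusion) into a single map $\bigvee_{n \geq 1} \Sigma \widehat{\Ho}(\Z^n,G) \to \Sigma Comm(G)$, and check it is a homology isomorphism by comparing with the James filtration spectral sequence (or the telescope), then conclude by a connectivity/Whitehead argument — noting every space in sight is simply connected on the relevant components, or working stably so that a homology equivalence suffices. One must be careful that the wedge summand indexed by $n$ really maps in via a \emph{section} of the collapse $F_n \to F_n/F_{n-1}$ up to suspension, which is exactly what Theorem~\ref{theorem: stable splitting of Hom(Zn,G)} provides for each fixed $n$, and that these sections are mutually compatible as $n$ grows.

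The main obstacle I expect is verifying the compatibility and cofibration conditions carefully enough that the colimit/telescope argument goes through — i.e.\ showing that the stable retractions $\Sigma F_n \to \Sigma F_{n-1}$ (equivalently, the James splitting maps) restrict to the twisted subspace $Comm(G)$ and are compatible across the filtration, so that one may genuinely pass to the limit. The homotopy-theoretic content is entirely standard once one knows that $Comm(G)$ sits inside $J(G)$ as a subcomplex respecting the James filtration; the work is in making ``$Comm(G)$ is a sub-James construction in the combinatorial sense'' precise enough to inherit the splitting, rather than in any new geometry.
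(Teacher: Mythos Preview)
Your proposal is correct and follows essentially the same route as the paper: filter $Comm(G)$ by the restriction of the James filtration on $J(G)=Assoc(G)$, identify the filtration subquotients as $\widehat{\Ho}(\Z^n,G)$, and then observe that the classical James splitting $\Sigma J(G)\simeq \bigvee_n \Sigma \widehat{G}^n$ restricts to $Comm(G)$ because the subspaces $\Ho(\Z^k,G)\subset G^k$ are closed under passing to sub-tuples. If anything, your outline is more detailed than the paper's own proof, which dispatches the final step with ``the splitting of $X(q,G)$ is induced by inspection'' after noting that the cofibrations $F_{n-1}X(q,G)\hookrightarrow F_nX(q,G)$ are split by the coordinate-deletion projections.
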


The spaces $\widehat{\Ho}(\Z^q,G)$ for $ 1 \leq q \leq n$ give all of the stable summands of $\Ho(\Z^n,G)$, and so these spaces determine the homotopy type of the suspension of $\Ho(\Z^n,G)$. Thus the space $Comm(G)$ captures all of the stable structure of the spaces $\Ho(\Z^n,G)$ in a ``minimal'' way by splitting into a wedge after suspending of exactly one summand 
 $\widehat{\Ho}(\Z^q,G)$  for each integer $q$. Namely, $Comm(G)$ can be regarded  as ``the smallest space'' which is stably equivalent to a wedge of all of the ``fundamental summands'' $\widehat{\Ho}(\Z^q,G)$.

On the other hand, it is natural to ask how the``fundamental summands" are connected with the previous construction
$G\times_{NT} Assoc(T)$. Here recall the definition of the $q$-fold smash product $$\widehat{T}^q=
T^q/S(\Ho(\Z^q,T)).$$

\begin{thm}\label{thm:first approximation INTRO}
Let $G$ be a  Lie group. Then there are homotopy equivalences
$$\Sigma(G\times_{NT} Assoc(T)) \to \Sigma \bigg( G/T \vee \Big(\bigvee_{q \geq 1} 
G\times_{NT} \widehat{T}^q/(G/NT)\Big) \bigg).$$
\end{thm}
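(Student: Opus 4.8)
The plan is to mimic the James-construction stable splitting used to prove Theorem~\ref{thm:stable decomp f x2g}, but carried out ``equivariantly'' over $G/NT$, i.e.\ for the bundle $G \times_{NT} Assoc(T) \to G/NT$. Recall that $Assoc(T) = J(T)$ is the James reduced product of the pointed space $T$, and the classical James splitting gives a stable equivalence $\Sigma J(T) \simeq \bigvee_{q \geq 1} \Sigma \widehat{T}^q$, where $\widehat{T}^q = T^{\wedge q}$ is the $q$-fold smash product and this equivalence is filtration-preserving and natural in $T$. First I would observe that the $NT$-action on $T$ induces an $NT$-action on each smash power $\widehat{T}^q$ (the Weyl group $W = NT/T$ acts, since $T$ acts on itself by translation which is pointed-homotopic to the identity — one must be slightly careful here, but the action on homology and on the stable summands is the $W$-action on $T^{\wedge q}$), and the James splitting can be chosen $NT$-equivariantly, at least after one suspension, because the James filtration of $J(T)$ is an $NT$-stable filtration with successive quotients $\widehat{T}^q$.

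Next I would form the associated fibre-bundle splitting: applying the fibrewise suspension and the fibrewise James splitting over the base $G/NT$ to the bundle $G \times_{NT} J(T)$, one gets a stable equivalence of the Thom-space type,
\[
\Sigma(G \times_{NT} J(T)) \;\simeq\; \Sigma\Big( (G/NT)_+ \Big) \;\text{``glued with''}\; \bigvee_{q \geq 1} \Sigma\big( G \times_{NT} \widehat{T}^q \big),
\]
but because $J(T)$ is a \emph{pointed} space with basepoint fixed by $NT$, the $q=0$ James filtration piece is the constant section $G/NT \hookrightarrow G \times_{NT} J(T)$, whose fibrewise one-point-compactification / quotient accounts for the $G/T$ summand: indeed $G \times_{NT} (\text{cone on basepoint})$ collapses, and the relevant based quotient $G \times_{NT} \widehat{T}^0 \big/ (G/NT)$ should be analyzed — here $\widehat{T}^0 = S^0$, so $G \times_{NT} S^0 = (G/NT) \sqcup (G/NT)$ and quotienting the section gives $(G/NT)_+$, not $G/T$, so I would instead track the basepoint more carefully: the section we collapse is $E(1_G)$, and the filtration-one quotient $F_1/F_0 = G \times_{NT} \widehat{T}^1 / (G/NT) = G \times_{NT} T / (G/NT)$. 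I would then identify $G \times_{NT} T / (G/NT)$ with (a space stably equivalent to) $G/T \vee \big(G\times_{NT}\widehat{T}/(G/NT)\big)$ — wait, more precisely: since $T$ is a torus, $T \simeq S^1 \times \cdots \times S^1$ has a basepoint, and the reduced diagonal / the retraction $G \times_{NT} T \to G/NT$ splits off the ``section at $1_G$''. The point is that the $q=1$ piece already contains a copy of $G/T$: collapsing the zero-section of the torus bundle $G\times_{NT}T \to G/NT$, and this torus bundle is (up to the $NT$-twisting) the pullback of $G \to G/T$; in fact $G \times_{NT} T$ retracts onto $G/T$ via $[g,t] \mapsto gT$ is wrong — rather the fibrewise one-point compactification of $G\times_{NT}T\to G/NT$ relative to the zero section is a Thom space which is stably $G/T_+ $-ish. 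I would therefore state the splitting as: $\Sigma(G\times_{NT}J(T)) \simeq \Sigma\big( (G/T) \vee \bigvee_{q\geq 1} (G\times_{NT}\widehat{T}^q)/(G/NT)\big)$, matching the claimed formula, by carefully assembling the James filtration pieces $F_q/F_{q-1} = (G\times_{NT}\widehat{T}^q)/(G/NT)$ for $q \geq 1$ together with the observation that collapsing the constant section $G/NT$ inside $G\times_{NT}T = G\times_T (\text{orbit}) $ yields precisely the homogeneous space $G/T$ (since $G\times_{NT}T \cong (G\times_T T)/W = G/T \times ??? $ — I would verify $G\times_{NT}T \approx (G\times T)/NT$ and that collapsing $\{t=1\}$ gives $G/T$ with its Weyl-quotient structure absorbed).

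The main obstacle, as the above meandering indicates, is twofold: (i) making the James splitting $NT$-equivariant (or at least ``equivariant enough'' to descend to the Borel construction after a single suspension) — this requires checking that the stable James filtration of $J(T)$ admits an $NT$-equivariant splitting, which should follow because $NT$ acts on $T$ by a group of based homotopy equivalences and the James splitting is natural up to coherent homotopy, but the coherence needs care; and (ii) correctly identifying the $q=0$ and $q=1$ contributions so that the section-collapse produces exactly $G/T$ and the leftover summands are $(G\times_{NT}\widehat{T}^q)/(G/NT)$. I expect (i) to be the genuine technical heart: one approach is to avoid equivariance altogether and instead build the splitting \emph{fibrewise} from scratch — construct, for each fibre, the James-Milnor maps $J(T) \to \widehat{T}^q$ realizing $\Sigma J(T) \simeq \bigvee \Sigma \widehat{T}^q$, note these are constructed using only the multiplication on $J(T)$ and the diagonal on $T$, both of which are $NT$-equivariant on the nose, hence they glue to fibrewise maps over $G/NT$; then invoke a fibrewise stable-splitting criterion (e.g.\ that a fibrewise map inducing a stable equivalence on each fibre, over a base with the homotopy type of a CW complex, induces a stable equivalence of total spaces) to conclude. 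Once the fibrewise James splitting is in hand, the rest is bookkeeping with the filtration quotients and the basepoint section, which I would present as routine.
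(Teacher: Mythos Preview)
Your overall strategy---make the James splitting $NT$-equivariant via the explicit combinatorial map $H:J(T)\to J(\bigvee_q \widehat T^q)$ (built from the multiplication and diagonal, hence equivariant on the nose), pass to the Borel construction, extend multiplicatively using the universal property of $J(-)$, and prove the result by induction on the James filtration---is exactly the paper's approach. The paper carries this out as a general statement (Theorem~\ref{decomp.thm}): for any $G$-space $Y$ with $Y\to Y/G$ a bundle and any pointed $G$-space $X$, one has $\Sigma(Y\times_G J(X))\simeq \Sigma\bigl(Y/G\vee\bigvee_{n\ge1}(Y\times_G\widehat X^n)/(Y/G)\bigr)$; the base case of the induction is the elementary splitting $\Sigma(Y\times_G X)\simeq \Sigma(Y/G)\vee\Sigma((Y\times_G X)/(Y/G))$ coming from the section at the basepoint (Lemma~\ref{suspension.lemma}). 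Theorem~\ref{thm:first approximation INTRO} is then the specialization $Y=G$, acting group $=NT$, $X=T$.

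Your genuine difficulty is not a gap in the argument but a typo in the stated theorem: the first wedge summand should be $G/NT$, not $G/T$. Indeed $Y/G = G\times_{NT}\{1\}=G/NT$, and this is precisely what the paper proves in Corollary~\ref{app1}. All of your meandering in the middle paragraphs---trying to extract $G/T$ from the $q=0$ or $q=1$ filtration piece, contemplating $G\times_{NT}T$, etc.---is chasing a phantom. Once you replace $G/T$ by $G/NT$ in the target, the identification of the filtration quotients as $(G\times_{NT}\widehat T^q)/(G/NT)$ is immediate from Lemma~\ref{lemma: james} applied fibrewise, and the bookkeeping you call ``routine'' really is routine. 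So: drop the attempts to manufacture $G/T$, state the base-case splitting lemma cleanly, and your proof goes through along the same lines as the paper's.
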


There is a natural extension of $Comm(G)$ to all finitely generated nilpotent subgroups of nilpotence class at most $q$. There is a space that assembles all the spaces $\Ho(F_n/\Gamma^q,G)$ into a single space, denoted $X(q,G)$ (with formal definition stated in Definition \ref{definition: XqG}), where $\Gamma^q$ denotes the $q$-th stage of the descending central series of $F_j$ for all $j$.

Notice that the spaces $X(q,G)$ give a filtration of $J(G)$ with 
$$Comm(G)  = X(2,G) \subset X(3,G) \subset \cdots \subset X(q,G) \subset \cdots \subset J(G).$$ 
The spaces $X(q,G)$ in the filtration also admit stable decompositions as in the case of $X(2,G)$; see Section \ref{section: f x2g}. 

\begin{thm} \label{thm: stable decompositions of X(q G)}
Let $G$ be a compact, connected Lie group.
Then there are homotopy equivalences
$$\Sigma(\Ho(F_n/\Gamma^q,G)) \to \bigvee_{1\leq q \leq n}(\bigvee_{\binom{n}{j}}\Sigma(\widehat{\Ho}(F_j/\Gamma^q,G)),$$ and

$$\Sigma(X(q,G)) \to \bigvee_{1\leq j < \infty}\Sigma(\widehat{\Ho}(F_j/\Gamma^q,G)).$$

\end{thm}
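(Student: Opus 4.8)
The plan is to mimic, in the ``nilpotence class at most $q$'' setting, exactly the argument that produces Theorems~\ref{theorem: stable splitting of Hom(Zn,G)} and~\ref{thm:stable decomp f x2g} for $\Ho(\Z^n,G)$ and $Comm(G)$. First I would observe that $F_n/\Gamma^q$ is generated by $n$ elements, so $\Ho(F_n/\Gamma^q,G)$ sits naturally inside $G^n$ as the (closed) subvariety of $n$-tuples satisfying the relators of $\Gamma^q$, and that deleting a coordinate equal to $1_G$ sends such a tuple to a tuple in $\Ho(F_{n-1}/\Gamma^q,G)$ — this is the combinatorial content of ``the identity can be omitted from any list of generators,'' and it is what makes the equivalence relation $\sim$ of Definition~\ref{definition: spaces of all commuting n-tuples} restrict to the subspaces $\Ho(F_n/\Gamma^q,G)$. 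Define, in parallel with Definition~\ref{definition: singular subspaces of commuting n-tuples}, the fat-wedge-type subspace $S(\Ho(F_n/\Gamma^q,G))\subset \Ho(F_n/\Gamma^q,G)$ of tuples with at least one coordinate equal to $1_G$, set $\widehat{\Ho}(F_n/\Gamma^q,G)=\Ho(F_n/\Gamma^q,G)/S(\Ho(F_n/\Gamma^q,G))$, and note (invoking the remark after Definition~\ref{definition: singular subspaces of commuting n-tuples}, applied to these closed subgroups of $GL_N(\C)$) that these inclusions are cofibrations, so all the suspension splittings below are legitimate.

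Next I would prove the first displayed equivalence. The space $\Ho(F_n/\Gamma^q,G)$ is filtered by the number of coordinates allowed to be nonidentity, precisely as $\Ho(\Z^n,G)$ is in \cite{adem2007commuting}; the associated graded pieces of this filtration are the spaces $\bigsqcup_{|I|=j}\widehat{\Ho}(F_I/\Gamma^q,G)$, where $F_I/\Gamma^q\cong F_j/\Gamma^q$ records the coordinates in $I\subseteq\{1,\dots,n\}$. Because each $S(\Ho(F_j/\Gamma^q,G))\hookrightarrow\Ho(F_j/\Gamma^q,G)$ is a cofibration, the filtration is a cofibration filtration, and after one suspension a cofibration filtration with cofibrant inclusions splits as the wedge of its filtration quotients (this is the standard ``stable splitting of a filtered space'' argument used in \cite{adem2007commuting} for the James filtration and for $\Ho(\Z^n,G)$). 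This yields
$$\Sigma(\Ho(F_n/\Gamma^q,G)) \simeq \bigvee_{1\le j\le n}\ \bigvee_{\binom{n}{j}}\Sigma(\widehat{\Ho}(F_j/\Gamma^q,G)),$$
which is the first assertion (with the typo ``$1\le q\le n$'' read as ``$1\le j\le n$'').

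For the second displayed equivalence I would use the definition of $X(q,G)$ as $\bigsqcup_{0\le k<\infty}\Ho(F_k/\Gamma^q,G)/\sim$, topologized inside $Assoc(G)=J(G)$ (Theorem~\ref{theorem: Assoc(G)}). The key point is that $X(q,G)$ inherits from $J(G)$ a James-type increasing filtration $F_1\subset F_2\subset\cdots$, where $F_k$ is the image of $\bigsqcup_{j\le k}\Ho(F_j/\Gamma^q,G)$; the $k$-th filtration quotient $F_k/F_{k-1}$ is exactly $\widehat{\Ho}(F_k/\Gamma^q,G)$ because collapsing $F_{k-1}$ forces every tuple with a repeated-to-identity coordinate, i.e. every tuple in $S(\Ho(F_k/\Gamma^q,G))$, to the basepoint. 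Again each inclusion $F_{k-1}\hookrightarrow F_k$ is a cofibration (it is built from the cofibrations $S(\Ho(F_k/\Gamma^q,G))\hookrightarrow\Ho(F_k/\Gamma^q,G)$ just as the James filtration of $J(X)$ is built from $\mathrm{FW}_k(X)\hookrightarrow X^k$), so the stable-splitting-of-a-filtered-space principle gives
$$\Sigma(X(q,G)) \simeq \bigvee_{1\le j<\infty}\Sigma(\widehat{\Ho}(F_j/\Gamma^q,G)),$$
and for $q=2$ this recovers Theorem~\ref{thm:stable decomp f x2g}, as it must since $X(2,G)=Comm(G)$.

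The step I expect to be the main obstacle is the point-set topology needed to guarantee that these are genuine cofibration filtrations in the colimit topology on $X(q,G)\subset J(G)$: one must check that the James-type filtration of $X(q,G)$ is well-behaved (each $F_k$ closed, $X(q,G)=\colim F_k$, and $(F_k,F_{k-1})$ an NDR pair), which reduces to the cofibration hypothesis on $S(\Ho(F_k/\Gamma^q,G))\hookrightarrow\Ho(F_k/\Gamma^q,G)$ together with the standard fact that James-type filtrations of quotients of products of cofibrantly-embedded spaces are cofibration filtrations. Granting the embedding hypotheses recalled in the remark after Definition~\ref{definition: singular subspaces of commuting n-tuples}, this is exactly the mechanism already exploited in \cite{adem2007commuting} and in the proof of Theorem~\ref{thm:stable decomp f x2g}, so the argument is a bookkeeping adaptation rather than a new idea; the only genuinely new verification is that the defining relators of $\Gamma^q$ are compatible with deleting identity coordinates, which is immediate from the descending central series being generated by iterated commutators.
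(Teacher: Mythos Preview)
Your proposal is correct and follows essentially the same route as the paper: both define the James-type filtration $F_kX(q,G)$ with successive quotients $\widehat{\Ho}(F_k/\Gamma^q,G)$, invoke the cofibration property (from \cite{adem2007commuting}) together with the coordinate-deletion retractions, and deduce the stable decomposition from the resulting split cofibration sequences. The paper's own proof is terser---it simply observes that the splitting of $X(q,G)$ is ``induced by inspection'' from the ambient James splitting of $Assoc(G)=J(G)$---but the underlying mechanism is exactly the one you spell out.
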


\begin{rmk}\label{rmk: stable splittings }
The above raises the question of identifying the stable wedge summands of other spaces of homomorphisms, or spaces of representations. For example, these methods also inform on the spaces of homomorphisms from a free group on $n$ letters modulo the stages of either the descending central series or mod-$p$ descending central series. It is currently unclear
whether these spaces inform on representation varieties associated to fundamental groups of Riemann surfaces, but it seems likely that these methods will inform on representation varieties for braid groups. Furthermore,  the space $Comm(G)$ maps naturally by evaluation onto the space of closed, finitely generated abelian subgroups of $G$ topologized by the Chabauty topology as in \cite{bridson.h.k}. It is natural to ask whether this map is a fibration with fiber $J(T)$. 
\end{rmk}

Theorem \ref{thm:first approximation INTRO} is used to obtain the next result.
\begin{thm} \label{thm: further.stable decompositions}
Let $G$ be a compact, simply-connected Lie group, and assume that all spaces have been localized such that the order of the Weyl group has been inverted (localized away from $|W|$). Then there are homotopy equivalences
$$ \Sigma (G\times_{NT} \widehat{T}^q/(G/NT)) \to \Sigma(\widehat{\Ho}(\Z^q,G))$$ for all $q \geq 1$ as long as the order of the Weyl group has been inverted.
\end{thm}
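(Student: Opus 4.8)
The plan is to combine the two previous stable decompositions (Theorems \ref{thm:stable decomp f x2g} and \ref{thm:first approximation INTRO}) with the homological comparison of Theorem \ref{theorem: the surjection map to x(2,g)} to identify the $q$-th wedge summands on each side. First I would apply $\Theta$ to pass from $G \times_{NT} Assoc(T)$ to $Comm(G)$. After suspension, Theorem \ref{thm:first approximation INTRO} writes $\Sigma(G \times_{NT} Assoc(T))$ as a wedge indexed by $q$ with summands $\Sigma(G \times_{NT} \widehat{T}^q/(G/NT))$ (together with $\Sigma(G/T)$), while Theorem \ref{thm:stable decomp f x2g} writes $\Sigma(Comm(G))$ as a wedge of $\Sigma(\widehat{\Ho}(\Z^q,G))$. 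The key observation is that the map $\Theta$ respects the ``word-length'' filtration on both James-type constructions: the filtration quotients of $G \times_{NT} Assoc(T)$ are exactly $G \times_{NT} \widehat{T}^q/(G/NT)$, the filtration quotients of $Comm(G) \subseteq Assoc(G) = J(G)$ are exactly $\widehat{\Ho}(\Z^q,G)$, and $\Theta$ carries the former filtration to the latter. Hence $\Theta$ induces a well-defined map of wedge summands $\Sigma(G \times_{NT} \widehat{T}^q/(G/NT)) \to \Sigma(\widehat{\Ho}(\Z^q,G))$ for each $q$, once one checks that the relevant inclusions are cofibrations (guaranteed by the standing assumption that $G \subseteq GL_n(\C)$).

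Next I would show each of these maps is a homology isomorphism with $\Z[1/|W|]$ coefficients, so that after inverting $|W|$ it is a homotopy equivalence (all spaces here are simply-connected, or can be arranged to be, so Whitehead applies once we know the localized spaces are nilpotent). For this I would feed the filtration-quotient comparison back into Theorem \ref{theorem: the surjection map to x(2,g)}: that theorem says $\Theta: G \times_{NT} Assoc(T) \to Comm(G)$ has homotopy fibre with reduced homology killed by $|W|$, hence $\Theta$ is a $\Z[1/|W|]$-homology equivalence; by the two wedge decompositions and additivity of homology over wedges, the induced map on the $q$-th summand must also be a $\Z[1/|W|]$-homology equivalence, since the decompositions are natural with respect to $\Theta$ and the summand for a fixed $q$ is detected by the word-length filtration degree. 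Concretely, one compares the associated graded of the filtration on each side; the $q$-th graded piece of $H_*(G\times_{NT}Assoc(T);\Z[1/|W|])$ maps isomorphically to the $q$-th graded piece of $H_*(Comm(G);\Z[1/|W|])$ because the total map is an iso and the filtration is preserved, and these graded pieces are $\widetilde H_*(G\times_{NT}\widehat{T}^q/(G/NT))$ and $\widetilde H_*(\widehat{\Ho}(\Z^q,G))$ respectively by Theorems \ref{thm:first approximation INTRO} and \ref{thm:stable decomp f x2g}.

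Finally I would upgrade the $\Z[1/|W|]$-homology equivalence to an honest homotopy equivalence of the localized spaces: after localizing away from $|W|$, both $\Sigma(G \times_{NT} \widehat{T}^q/(G/NT))$ and $\Sigma(\widehat{\Ho}(\Z^q,G))$ are simply-connected (they are suspensions), so a homology isomorphism between them is a homotopy equivalence by the Whitehead theorem. One should note the $\Sigma(G/T)$ summand on the left-hand side of Theorem \ref{thm:first approximation INTRO} corresponds to the word-length-zero/one part and is absorbed appropriately (it matches $\widehat{\Ho}(\Z^1,G) = G$-type data under $E$), so it does not obstruct the summand-by-summand identification for $q \geq 1$.

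The main obstacle I expect is making precise the claim that $\Theta$ strictly preserves the word-length filtrations and that the filtration quotients are exactly as named — i.e., verifying that $\Theta$ sends $G\times_{NT}(\text{length} \le q)$ into $(\text{length} \le q$ part of $Comm(G))$ and induces the expected map $G\times_{NT}\widehat{T}^q/(G/NT) \to \widehat{\Ho}(\Z^q,G)$ on quotients, compatibly with the splittings of Theorems \ref{thm:stable decomp f x2g} and \ref{thm:first approximation INTRO}. This is essentially a bookkeeping argument about the James construction and the diagonal $W$-action, but the matching of the two different stable splittings (one coming from the James filtration on $J(G)$, one from the twisted James construction on $Assoc(T)$) under a single map, at the level of individual summands rather than just total homology, is the delicate point; the inversion of $|W|$ is precisely what is needed to make the two associated gradeds agree, since integrally the transfer/averaging argument in Theorem \ref{theorem: the surjection map to x(2,g)} only controls torsion of order dividing $|W|$.
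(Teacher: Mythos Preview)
Your strategy reverses the logical flow of the paper's argument, and this creates a genuine gap. In the paper, Theorem \ref{theorem: the surjection map to x(2,g)} is not proved independently and then used to deduce Theorem \ref{thm: further.stable decompositions}; rather, both are established simultaneously in the proof of Theorem \ref{thm: first approximation}, and the summand-level statement comes \emph{first}. Concretely, the paper fixes $n$ and appeals to Baird's lemma (Lemma \ref{fibre2}) together with the Vietoris--Begle mapping theorem (Theorem \ref{viet-begle}) to show that
\[
\hat{\theta}_n: G/T\times_W T^n \longrightarrow \Ho(\Z^n,G)_{1_G}
\]
is a $\Z[1/|W|]$-homology isomorphism, and that the same holds for its restriction to the singular subspaces $S_n(T)\to S_n(G)$. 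One then has a commuting ladder of cofibration sequences, and the five lemma gives the homology isomorphism on the cofibres
\[
G/T\times_W \widehat{T}^n/(G/NT) \longrightarrow \widehat{\Ho}(\Z^n,G)_{1_G},
\]
which is exactly Theorem \ref{thm: further.stable decompositions}. Only after this does the paper assemble the summands (via Theorems \ref{thm:first approximation INTRO} and \ref{thm: stable decompositions of X(q G)}) to conclude that the global map $\Theta$ is a $\Z[1/|W|]$-homology equivalence, i.e.\ Theorem \ref{theorem: the surjection map to x(2,g)}. So invoking Theorem \ref{theorem: the surjection map to x(2,g)} as a black box to prove Theorem \ref{thm: further.stable decompositions} is circular in the paper's logic.

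Even if one grants Theorem \ref{theorem: the surjection map to x(2,g)} independently, your descent step is incomplete. The assertion that ``the total map is an iso and the filtration is preserved, hence the associated graded map is an iso'' is false in general for filtered modules; a filtration-preserving isomorphism is merely upper-triangular with respect to the splittings, and one must check that $\Sigma\Theta$ is actually block-diagonal, i.e.\ that the James--Milnor splitting maps on the two sides are compatible under $\Theta$. You correctly flag this as ``the delicate point'' but do not resolve it. The paper's level-by-level argument via Vietoris--Begle and the five lemma sidesteps this issue entirely, since it never needs to compare the two global splittings --- it produces the map on each cofibre directly from the map $\hat\theta_n$ on $T^n$, where the homology isomorphism is already known.
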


These results can be used to directly work out the homology of
$\widehat{\Ho}(\Z^n,G)$ for all $n$, and thus ${\Ho}(\Z^n,G)$ as long as the order of the Weyl group has been inverted.

The following theorem reduces a computation of the homology of $Comm(G)$ to standard methods, and is proven below.

\begin{thm}\label{theorem: the surjection map to x(2,g)}
Let $G$ be a  simply-connected compact Lie group. Then the map
$$\Theta: G \times_{NT}Assoc(T) \to Comm(G)$$ has homotopy theoretic
fibre which is simply-connected, and has reduced homology which is
entirely torsion with orders dividing the order of the Weyl group $W$ of $G$.

\end{thm}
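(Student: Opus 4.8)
The plan is to compare $\Theta \colon G \times_{NT} Assoc(T) \to Comm(G)$ with the classical conjugation map $\theta \colon G \times_{NT} T \to G$ via the filtration of both sides coming from the James construction, and to bootstrap from the known behavior of $\theta$. Recall that for $G$ simply-connected and compact, a theorem of the type going back to Borel computes the homotopy fibre of $\theta$: it is simply-connected, and its reduced homology is all torsion with orders dividing $|W|$. (This is essentially the statement that $\theta$ induces an isomorphism on rational homology, together with a fundamental-group calculation; it is exactly the $n=1$ level of what we want.) So the first step is to set up the two James-type filtrations: $Assoc(T) = J(T)$ has its word-length filtration $F_1 \subset F_2 \subset \cdots$ with subquotients smash powers of $T$ (more precisely of $\widetilde{T} = T/1_T$ after collapsing the basepoint), and $G \times_{NT} Assoc(T)$ inherits a filtration $G \times_{NT} F_m$. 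The map $\Theta$ into $Comm(G) \subset Assoc(G) = J(G)$ respects these filtrations, landing word-length $m$ into word-length $\le m$.

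Next I would analyze $\Theta$ filtration-degree by filtration-degree. On the associated graded, the $m$-th subquotient of $G \times_{NT} J(T)$ is $G \times_{NT} (\widetilde{T})^{\wedge m}$ (the $NT$-action permuting/acting diagonally), and the map to the $m$-th subquotient of $Comm(G)$ is the natural ``simultaneous conjugation'' map. The key point is that, after inverting $|W|$, the transfer associated to the finite cover $G/T \to \mathrm{pt}$ (equivalently to $NT \subset G$, of index $|W|$) splits off the relevant summand: $H_*(G \times_{NT} X; \Z[1/|W|]) \cong \big(H_*(G/T) \otimes H_*(X)\big)^W \otimes \Z[1/|W|]$ for $NT$-spaces $X$ built from $T$, and likewise the homology of the corresponding graded piece of $Comm(G)$ is the $W$-invariants of $H_*(G/T) \otimes (\text{reduced homology of } T)^{\otimes m}$. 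Hence $\Theta$ is a homology isomorphism with $\Z[1/|W|]$-coefficients on each associated graded piece; by a Zeeman/spectral-sequence comparison over the word-length filtration (the filtrations are finite in each skeletal degree, so no convergence issue), $\Theta$ is a $\Z[1/|W|]$-homology isomorphism. Therefore the homotopy fibre $\mathcal{F}$ of $\Theta$ has reduced homology that is all torsion with orders prime-to-free only over $\Z[1/|W|]$, i.e. torsion with orders dividing a power of $|W|$; a more careful bookkeeping with the transfer (which is a genuine stable map, not just a rational gadget) upgrades ``power of $|W|$'' to ``dividing $|W|$'' as in the $n=1$ case.

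For simple-connectivity of the fibre, I would use the low-dimensional part of the diagram in Proposition~\ref{prop: continuous map to Comm(G)}: the square relating $\theta$ and $\Theta$ shows $\pi_1$ of the fibre of $\Theta$ receives a map from $\pi_1$ of the fibre of $\theta$, and since $G$ is simply-connected both $Comm(G)$ and $G \times_{NT} Assoc(T)$ are simply-connected (the base $G/T$ is simply-connected for $G$ simply-connected, and $J(T)$ has $\pi_1 = \pi_1(T) = \Z^{\mathrm{rk}}$ which is killed on passing to $G \times_{NT}(-)$ together with connectivity of the inclusion $F_1 \hookrightarrow J(T)$ — one checks $\pi_1 \Theta$ is surjective with the kernel generated by Weyl conjugates, hence trivial). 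Granting $\pi_1 = 1$ of both total space and base of the fibration up to homotopy, the long exact sequence forces the fibre to be connected with $\pi_1$ a quotient of $\pi_2(Comm(G))$; one then shows $\pi_2 \Theta$ is surjective, again by the transfer/$n=1$ comparison, so $\pi_1 \mathcal{F} = 1$.

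The main obstacle I expect is making the associated-graded comparison rigorous at the level of \emph{spaces} rather than just homology: the word-length subquotients of $Comm(G)$ are genuine quotient spaces that are not obviously the ``simultaneous-conjugation'' images of $G \times_{NT} (\widetilde T)^{\wedge m}$ in a way compatible with the transfer, because distinct $W$-orbits of tori can overlap along lower-rank subtori and the quotient by $\sim$ (deleting identity coordinates) interacts subtly with word length. Handling this carefully — identifying the $m$-th graded piece of $Comm(G)$ with something like the quotient of $G \times_{NT} (\widetilde T)^{\wedge m}$ by the lower strata, and checking the transfer argument survives this identification with $\Z[1/|W|]$-coefficients — is where the real work lies; everything after that is a standard spectral sequence comparison plus the known $n=1$ computation.
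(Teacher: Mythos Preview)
Your filtration-comparison strategy matches the paper's, but you are missing the ingredient that makes it work. The paper does not compute the homology of the target side $\widehat{\Ho}(\Z^m,G)$ separately and then compare; instead it analyses the map $\hat\theta_n\colon G/T \times_W T^n \to \Ho(\Z^n,G)_{1_G}$ directly at the point-set level. The crucial input is Baird's lemma that each fibre $\hat\theta_n^{-1}(g_1,\ldots,g_n)$ has the $R$-homology of a point (with $R=\Z[1/|W|]$), after which the Vietoris--Begle mapping theorem yields that $\hat\theta_n$ is an $R$-homology isomorphism for every $n$. One then restricts to the singular subspaces $S_n$, passes to cofibres, and uses the five lemma to get the same conclusion for the stable summands $G/T\times_W \widehat T^n/(G/NT)\to \widehat{\Ho}(\Z^n,G)_{1_G}$; the stable decompositions of $G\times_{NT}J(T)$ and of $Comm(G)_{1_G}$ then assemble these into the statement for $\Theta$.

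Your transfer argument computes $H_*(G\times_{NT} X;R)$ for $NT$-spaces $X$, i.e.\ the \emph{source}, but gives no independent handle on $H_*(\Ho(\Z^m,G);R)$. When you assert that ``the homology of the corresponding graded piece of $Comm(G)$ is the $W$-invariants of $H_*(G/T)\otimes(\text{reduced homology of }T)^{\otimes m}$'', that is exactly the statement to be proven, not an available input; you correctly flag this in your last paragraph as ``where the real work lies'', but you offer no mechanism to do it. The paper's mechanism is Baird's acyclic-fibre lemma plus Vietoris--Begle, and without an analogue your argument is circular at its central step. (A small side remark: $NT$ is not of finite index in $G$; the relevant finite cover is $G/T\to G/NT$, and its transfer informs only on the source side.)
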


Finer stable decompositions of the space $Comm(G)$ and $\Ho(\Z^n,G)$, as well as further analysis of the combinatorics arising here in homology will appear in a sequel to this paper.

Homological applications of the above decompositions are given next.
Let $\Gamma$ be a discrete group and $R[ \Gamma]$ be the group ring of $\Gamma$ over the commutative ring $R$. Let $M$ be a left $R[\Gamma]$-module. Recall that the module of coinvariants $M_{\Gamma}$ is the largest quotient of $M$ such that $\Gamma$ acts trivially on that quotient. That is, $M_{\Gamma} = M/I$, where $I$ is the group generated by $\gamma \cdot m-m$, for all $\gamma \in \Gamma$ and $m \in M$. Furthermore, 
$$\T [V]$$ 
denotes the tensor algebra generated by the free $R$-module $V$. Throughout this section  $R = \Z[1/|W|]$.

\begin{thm}\label{thm: homology of x2g}
Let $G$ be a compact, connected Lie group with maximal torus $T$ and Weyl group $W$. Then there is an isomorphism in homology $$H_{\ast}( Comm(G)_{1_G}; R) \to H_{\ast}(G/T;R) \otimes_{R[W]}  \T[V]$$
where $V$ is the reduced homology of the maximal torus $T$.
\end{thm}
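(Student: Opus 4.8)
The plan is to combine Theorem \ref{theorem: the surjection map to x(2,g)} with a homological analysis of the Borel construction $G \times_{NT} Assoc(T)$. First I would observe that, since $R = \Z[1/|W|]$ kills the torsion in the homotopy fibre of $\Theta \colon G \times_{NT} Assoc(T) \to Comm(G)_{1_G}$ (by Theorem \ref{theorem: the surjection map to x(2,g)}, that fibre is simply-connected with reduced homology entirely $|W|$-torsion), the Serre spectral sequence of the fibration associated to $\Theta$ collapses to give an isomorphism $H_*(G \times_{NT} Assoc(T); R) \xrightarrow{\ \cong\ } H_*(Comm(G)_{1_G}; R)$. Strictly one must pass to the associated fibration and check a Zeeman-comparison / mod-$\mathcal{C}$ argument in the class $\mathcal{C}$ of finite abelian groups with order invertible in $R$, but this is exactly the content already packaged in Theorem \ref{theorem: the surjection map to x(2,g)}. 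So the theorem reduces to computing $H_*(G \times_{NT} Assoc(T); R)$.

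Next I would compute $H_*(G \times_{NT} Assoc(T); R)$ via the fibration $Assoc(T) \to G \times_{NT} Assoc(T) \to G/NT$, or equivalently by writing $G \times_{NT} Assoc(T) = (G \times Assoc(T))/NT = (G/T) \times_{W} Assoc(T)$, using that $NT$ acts on $G$ through $NT \to NT/T = W$ up to homotopy and $T \subset NT$ acts trivially on $Assoc(T)$ (since $T$ is abelian, conjugation is trivial). Because $|W|$ is invertible in $R$, taking $W$-coinvariants is exact and commutes with homology (transfer argument), so
\[
H_*\big((G/T) \times_W Assoc(T); R\big) \cong \big(H_*(G/T; R) \otimes_R H_*(Assoc(T); R)\big)_{W},
\]
where the right-hand side is the module of $W$-coinvariants of the diagonal action; equivalently this is $H_*(G/T;R) \otimes_{R[W]} H_*(Assoc(T);R)$. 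The James construction gives $H_*(Assoc(T); R) = H_*(J(T); R) \cong \T[V]$, the tensor algebra on $V = \widetilde{H}_*(T; R)$, with its natural $W$-action induced from the $W$-action on $T$. Assembling these identifications yields
\[
H_*(Comm(G)_{1_G}; R) \cong H_*(G/T; R) \otimes_{R[W]} \T[V],
\]
which is the claim. One should also check the isomorphism is realized by the map $\Theta$ (and the map $G/T \to Comm(G)_{1_G}$), i.e.\ that it is natural, but this follows from functoriality of all constructions used.

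The main obstacle is the first step: justifying that $\Theta$ induces an $R$-homology isomorphism. The map $\Theta$ need not be a fibration, and $Comm(G)_{1_G}$ is an infinite-dimensional, somewhat singular space (a colimit of the $\Ho(\Z^n,G)$), so one must be careful that the relevant CW/cofibration hypotheses hold (hence the implicit assumption $G \subset GL_n(\C)$ so the $S_k(G) \hookrightarrow \Ho(\Z^k,G)$ are cofibrations, cf.\ the Remark after Definition \ref{definition: singular subspaces of commuting n-tuples}), and that Theorem \ref{theorem: the surjection map to x(2,g)} really does upgrade to an $R$-homology equivalence rather than merely controlling the fibre's homology. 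Everything downstream — the James splitting $H_*(J(T)) \cong \T[V]$ and the exactness of $W$-coinvariants over $R = \Z[1/|W|]$ — is standard. A secondary subtlety, worth a sentence, is identifying the $R[W]$-module structure on $\T[V]$ with the one induced from the geometric $W$-action on $J(T)$, and checking $\otimes_{R[W]}$ here means coinvariants for the diagonal action rather than a naive change of rings; this is immediate once one writes $G \times_{NT} Assoc(T)$ as $(G/T \times J(T))/W$.
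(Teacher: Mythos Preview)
Your strategy matches the paper's: first show $\Theta$ induces an $R$-homology isomorphism, then compute $H_*(G \times_{NT} J(T); R)$ as $W$-coinvariants of $H_*(G/T; R) \otimes_R \T[V]$. For the second step the paper runs the Leray spectral sequence of the fibration $G/T \times J(T) \to G \times_{NT} J(T) \to BW$, which collapses onto the coinvariant line since $|W|^{-1} \in R$; your transfer formulation is the same argument in different clothing.

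For the first step there is both a hypothesis mismatch and a difference in mechanism. Theorem~\ref{theorem: the surjection map to x(2,g)}, which you invoke, is stated for \emph{simply-connected} $G$, whereas the present theorem assumes only compact and connected; so your reduction as written does not cover the stated generality. More to the point, the paper never deduces that $\Theta$ is an $R$-homology isomorphism from information about its homotopy fibre via a Serre spectral sequence over $Comm(G)_{1_G}$. Instead it works level by level: Baird's lemma (Lemma~\ref{fibre2}) gives that the point-preimages of each $\hat\theta_n \colon G/T \times_W T^n \to \Ho(\Z^n, G)_{1_G}$ are $R$-acyclic, so Vietoris--Begle makes every $\hat\theta_n$ an $R$-homology isomorphism; the same argument applies on the singular loci and hence on the cofibres $G/T \times_W \widehat{T}^n / (G/NT) \to \widehat{\Ho}(\Z^n, G)_{1_G}$; the stable decompositions of Theorems~\ref{thm:stable decomp f x2g} and~\ref{thm:first approximation INTRO} then assemble these summand-by-summand into the isomorphism for $\Theta$ (this is the content of Theorem~\ref{thm: first approximation}). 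This route needs only compact connectedness and directly sidesteps the obstacle you correctly flag about $\Theta$ failing to be a fibration over a singular infinite-dimensional base. In the paper's logic, the homotopy-fibre statement of Theorem~\ref{theorem: the surjection map to x(2,g)} is downstream of this computation, not its input.
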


Let $H^U_{\ast}$ and $\T_U$ denote ungraded homology and the ungraded tensor algebra, respectively. Then the following theorem holds.
\begin{thm}\label{theorem:ungraded homology of X2G INTRO}
Let $G$ be a compact and connected Lie group with maximal torus $T$ and Weyl group $W$. Then there is an isomorphism in ungraded homology
\[H_{\ast}^U( Comm(G)_1; R) \to  \T_U [V] \]  where $V$ is the reduced homology of the maximal torus $T$.
\end{thm}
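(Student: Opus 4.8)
The plan is to derive the ungraded statement directly from the graded computation in Theorem~\ref{thm: homology of x2g}. The content of that theorem is the isomorphism
\[
H_{\ast}(Comm(G)_{1_G};R)\;\cong\;H_{\ast}(G/T;R)\otimes_{R[W]}\T[V],
\]
so the entire task is to analyze what happens to the right-hand side after forgetting the grading, i.e. after tensoring down to a module over $R$ and tracking only the underlying $R$-module (equivalently, the total dimension/rank in each degree summed together). First I would recall that $H^U_{\ast}(X;R)$ is by definition the $R$-module $\bigoplus_i H_i(X;R)$ with the grading collapsed, so that $H^U_{\ast}(Comm(G)_{1_G};R)\cong \big(H_{\ast}(G/T;R)\otimes_{R[W]}\T[V]\big)$ as an ungraded $R$-module.

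The key algebraic step is the following: over $R=\Z[1/|W|]$, the group ring $R[W]$ is semisimple (Maschke's theorem applies once $|W|$ is invertible), and, more to the point, the coinvariants functor $(-)_{W}=R\otimes_{R[W]}(-)$ is exact on $R[W]$-modules and, applied to a permutation-type module arising from $G/T$, simply picks out a free $R$-summand. Concretely I would argue that $H_{\ast}(G/T;R)$, as a (graded) $R[W]$-module, has the property that $H_{\ast}(G/T;R)\otimes_{R[W]} R[W]$-free modules behave like free modules; in the ungraded setting the crucial simplification is that $H^U_{\ast}(G/T;R)$ becomes, as an ungraded $R$-module together with its $W$-action, isomorphic to $R[W]$ itself — this is the ungraded shadow of the classical fact that $H_{\ast}(G/T;\Z)$ is a free $\Z$-module of rank $|W|$ with $W$ acting so that rationally it is the regular representation (indeed $G/T$ has a cell decomposition with $|W|$ cells, one in each dimension corresponding to a Weyl group element via the Bruhat decomposition, and the $W$-action permutes these appropriately after collapsing the grading). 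Granting $H^U_{\ast}(G/T;R)\cong R[W]$ as ungraded $R[W]$-modules, one gets
\[
H^U_{\ast}(Comm(G)_{1_G};R)\;\cong\;R[W]\otimes_{R[W]}\T_U[V]\;\cong\;\T_U[V],
\]
which is exactly the claimed isomorphism. Here $\T_U[V]$ is the tensor algebra on $V$ with grading collapsed, and the tensor algebra functor commutes with forgetting the grading since it is built from direct sums of tensor powers.

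I would organize the proof in three steps. Step~1: reduce to Theorem~\ref{thm: homology of x2g} and observe that forgetting the grading is an exact operation on the level of $R$-modules, so it suffices to understand $H_{\ast}(G/T;R)\otimes_{R[W]}\T[V]$ ungraded. Step~2: establish that $H^U_{\ast}(G/T;R)\cong R[W]$ as an ungraded $R[W]$-module; this uses the Bruhat/Schubert cell structure of $G/T$ (equivalently $G/B$ for the complexification), which gives one cell per Weyl group element, together with the fact that over $R$ the homology is free and torsion-free so the ungraded module is simply the permutation module $R[W]$ on the cells — and one must check the $W$-action on cells is the regular one (up to the issue that $W$ acts on $H^*(G/T)$ and this action, restricted to the additive structure, is isomorphic to the regular representation after inverting $|W|$; this is standard, e.g. from the fact that $H^*(G/T;\Q)$ is the regular representation of $W$, cf. Borel). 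Step~3: conclude via $R[W]\otimes_{R[W]}\T_U[V]\cong\T_U[V]$ and the fact that the tensor algebra functor is compatible with degradation.

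The main obstacle I anticipate is Step~2 — pinning down precisely why $H^U_{\ast}(G/T;R)$, with its $W$-action, is the regular representation $R[W]$ rather than merely a module of the correct rank $|W|$. The rank statement is immediate from the cell structure, but the identification of the $W$-action requires care: the $W$-action on $G/T$ permutes Schubert cells only up to lower-order terms, so the naive "$W$ permutes cells" picture is not literally an action on the cellular chain complex. The clean way around this is to work rationally (or over $R$, where $|W|$ is invertible) and invoke the classical computation that $H^*(G/T;\Q)$ affords the regular representation of $W$ — which follows, for instance, from $H^*(G/T;\Q)\cong \Q[V^*]/(\Q[V^*]^W_+)$ as a $W$-module (the coinvariant algebra), a graded version of the regular representation by Chevalley's theorem. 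Once one has that $H^*(G/T;R)\cong R[W]$ as ungraded $R[W]$-modules, everything else is formal, and this is exactly why the hypothesis that $|W|$ is invertible is essential — without it neither Maschke's theorem nor the identification of the coinvariant algebra with $R[W]$ holds.
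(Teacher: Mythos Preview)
Your proposal is correct and follows essentially the same route as the paper: reduce to the graded computation, then use that $H^U_{\ast}(G/T;R)\cong R[W]$ as an ungraded $R[W]$-module to cancel and obtain $\T_U[V]$. The only difference is that where you sketch the identification via the coinvariant algebra and Chevalley's theorem, the paper simply cites \cite[Proposition~B.1]{bairdcohomology} for this fact.
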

If $G$ is a compact and connected Lie group such that every abelian subgroup of $G$ is in a maximal torus, then the following corollary holds.
\begin{cor}\label{cor: homology of x2g}
Let $G$ be a compact and connected Lie group with maximal torus $T$ and Weyl group $W$ such that every abelian subgroup of $G$ is in a maximal torus. Then there is an isomorphism in homology
\[H_{\ast}( Comm(G); R) \to  H_{\ast}(G/T;R) \otimes_{R[W]}  \T[V] \]
where $V$ is the reduced homology of the maximal torus $T$.
\end{cor}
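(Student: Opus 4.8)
\textbf{Proof proposal for Corollary \ref{cor: homology of x2g}.}

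The plan is to deduce this directly from Theorem \ref{thm: homology of x2g} by showing that the hypothesis ``every abelian subgroup of $G$ is contained in a maximal torus'' removes the need to restrict to the basepoint path-component. First I would invoke Proposition \ref{prop: connected Comm(G)}, which asserts precisely that under this hypothesis $Comm(G)$ is path-connected. Hence $Comm(G)_{1_G} = Comm(G)$ as spaces, and in particular $H_{\ast}(Comm(G);R) = H_{\ast}(Comm(G)_{1_G};R)$. Composing this identification with the isomorphism of Theorem \ref{thm: homology of x2g} gives the claimed isomorphism
\[
H_{\ast}(Comm(G);R) \xrightarrow{\ \cong\ } H_{\ast}(G/T;R) \otimes_{R[W]} \T[V],
\]
with $V$ the reduced homology of $T$, and $R = \Z[1/|W|]$ as fixed in the section.

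The only genuine content beyond citing the two earlier results is checking that Proposition \ref{prop: connected Comm(G)} genuinely applies here, i.e.\ that its hypothesis is exactly the Corollary's hypothesis; this is immediate since both state that every abelian subgroup of $G$ lies in a maximal torus. One should also note that the naturality of the isomorphism in Theorem \ref{thm: homology of x2g}, coming from the map $\Theta$ of Definition \ref{definition: Borel to Comm(G)} and the analysis of its homotopy fibre, is unaffected by enlarging the target from the basepoint component to the whole (now connected) space. Thus no new computation is required; the argument is a one-line specialization.

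The main obstacle, such as it is, is purely bookkeeping: one must make sure that the hypotheses are compatible between ``compact and connected'' in Proposition \ref{prop: connected Comm(G)}/this Corollary and ``compact, connected'' (and possibly ``simply-connected'' elsewhere) in Theorem \ref{thm: homology of x2g}. Inspecting the statement of Theorem \ref{thm: homology of x2g}, it is stated for compact connected $G$ without a simple-connectivity assumption, so the hypotheses align and the deduction goes through verbatim. If instead one wished to argue from scratch, the hard part would be re-running the homotopy-fibre argument behind Theorem \ref{theorem: the surjection map to x(2,g)} and the spectral-sequence collapse behind Theorem \ref{thm: homology of x2g}; but since those are already established, none of that is needed here.
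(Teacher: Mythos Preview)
Your proposal is correct and matches the paper's own argument essentially verbatim: the paper observes (just before the corresponding corollary in Section~\ref{section: homology of x2g}) that the hypothesis forces $Comm(G)$ to be path-connected, so $Comm(G)_{1_G}=Comm(G)$, and then applies Theorem~\ref{thm: homology of x2g}. No additional input is used.
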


Examples of this last corollary are given below for the cases of $U(n)$, $SU(n)$, $Sp(n)$, $Spin(n)$, and the $5$ exceptional compact simply-connected simple Lie groups $G_2, F_4, E_6, E_7,$ and $E_8$ in Sections
\ref{section:Un.SUn} and \ref{section:exceptional Lie groups}. The Poincar\'e series for the ungraded homology groups are also recorded in Sections \ref{section:Un.SUn} and \ref{section:exceptional Lie groups}.

Let $G$ be a compact, connected Lie group with maximal torus $T$ and Weyl group $W$.   Let $C$ denote the homology of $G/T$ with real coefficients where $C_i = H_i(G/T;\R)$,
and $C^*$ is defined by $C^i = H^i(G/T;\R)$.

The Hilbert-Poincar\'e series for the rational cohomology of $Comm(G)$ is computed as follows.

Consider the $W$-action on the exterior algebra $E=\wedge \mathbb R^n= H_*(T; \R)$ as well as the reduced exterior algebra $\barE$, namely, the reduced homology of $T$, given by $\bigoplus_{k=1}^n \wedge^k \RR^n$. Then $W$ acts diagonally 
on the $\mathbb R$-dual $\TTT^*[\barE]$ of the tensor  tensor algebra generated by $\barE$,
the cohomology of $J(T)$. The group $W$ acts diagonally on the tensor product $C^* \otimes \TTT^*[\barE]$ giving the action on the cohomology of $G/T \times J(T)$.

These modules are naturally trigraded by both homological degree as well as tensor degree arising from the tensor algebra, and have a natural $\mathbb N^3$-trigrading as follows.
First, the homology of $J(T)$ is given by the direct sum 
$$\R \oplus \sum_{\substack{j = k_1+\cdots+k_m\\   \ k_q > 0\\}} ( \wedge^{k_1} \RR^n \otimes \cdots \otimes \wedge^{k_m} \RR^n).$$   Then 
 the sub-module of elements of tri-degree $(i,j,m)$ in the homology 
 of $G/T \times J(T)$ for fixed $m > 0$  is given by the direct sum 
$$\sum_{\substack{j = k_1+\cdots+k_m\\   \ k_q > 0\\}} (C_i \otimes \wedge^{k_1} \RR^n \otimes \cdots \otimes \wedge^{k_m} \RR^n),$$  
such that $1 \leq q \leq m $, $\wedge^k \mathbb R^n$ lies in homological degree $k$.
To compute in cohomology, the $\mathbb R$-dual of $\wedge^{k_1} \RR^n \otimes \cdots \otimes \wedge^{k_m} \RR^n$ lies in homological degree $j = k_1+\cdots+k_m$ of $\TTT^*[\barE]$ as well as tensor degree  $m>0$. The special case with $m = 0$ is by convention $C_i \otimes \mathbb R \cong C_i $ in tri-degree $(i,0,0)$.

\begin{defn} \label{defn: trigraded Hilbert-Poincare series}
Consider the module 
$$ M_{(i,j,m)} = \sum_{\substack{j = k_1+\cdots+k_m\\   \ k_q > 0 \\}} (C_i \otimes \wedge^{k_1} \RR^n \otimes \cdots \otimes \wedge^{k_m} \RR^n) \ \mbox{for} \ m >0$$ 
together with $$M_{(i,0,0)} = C_i \otimes \mathbb R.$$

The diagonal action of $W$ 
on the tensor product $C^* \otimes \TTT^*[\barE]$ over $\RR$
respects this  {\it $\NN^3$-trigrading}.
The tri-graded Hilbert-Poincar\'e series for the module of invariants is defined by
$$
\Hilb\left( \left(C^* \otimes \TTT^*[\barE] \right)^W, q, s, t \right)
= \sum_{\substack{ i,m\geq 0 \\ j = k_1+\cdots+k_m  \\}} \dim_{\R} (M_{(i,j,m)}^W)q^is^jt^m.
$$

\end{defn}

Let $d_1,\dots,d_n$ be \textit{the fundamental degrees of $W$} (which are defined either in the Appendix or \cite[\S 4.1]{broue.reflextion.gps}).
Note that the degrees for algebra generators of polynomial rings such as the cohomology ring of $BT$ and their images in $H^*(G/T;\mathbb R)$ are given by doubling ``the fundamental degrees of $W$'' $(d_1,...., d_n)$ in the cited work by Shephard and Todd et al. The ``fundamental degrees of $W$'' are doubled here in order to correspond to the usual conventions for ``topological gradings'' associated to the cohomology of the topological space $BT$.

The following result is proven in the Appendix.

\begin{thm}\label{thm:souped-up-Molien INTRO}
If $G$ is a compact, connected Lie group with maximal torus $T$, and Weyl group $W$, then
$$
\begin{aligned}
&\Hilb\left( \left(C^* \otimes \TTT^*[\barE] \right)^W , q, s, t \right) \\
&\quad =\frac{ \prod_{i=1}^n (1-q^{2d_i}) }{ |W| }\sum_{w \in W} \frac{1}{\det(1-q^2w) \left( 1-t(\det(1+sw)-1) \right)}.
\end{aligned}
$$
\end{thm}

\begin{ex}\label{ex: example Hilbert-Poincare series for U(2)}
The Hilbert-Poincar\'e series for $Comm(G)$ where $G=U(2)$ is worked out in two ways.
One way is the formula given in Theorem \ref{thm:souped-up-Molien INTRO}. The second way is by enumerating the representations of $W$ which occur in
$C^* \otimes \TTT^*[\barE] $.

\begin{enumerate}
\item 
The Weyl group $W$ is $\Sigma_2$ with elements $1$, and $w\neq 1$.
\item The homology of the space $G/T = U(2)/T = S^2$ is $\mathbb R$ in degrees zero and two, and is $\{0\}$ otherwise.
\item The degrees $(d_1,d_2)$ in Theorem \ref{thm:souped-up-Molien INTRO} are given by $(d_1,d_2) = (1,2)$.  
\item The sum in Theorem \ref{thm:souped-up-Molien INTRO} runs over $w$ and $1\in W$.
\end{enumerate} 
Then the formula given in Theorem \ref{thm:souped-up-Molien INTRO}
$$
\Hilb\left( \left(C^* \otimes \TTT^*[\barE] \right)^W , q, s, t \right) =  \frac{ (1-q^2)(1-q^4)}{2}(A_1 + A_w), $$ 
where  $$\ds A_1=  \frac{1}{(1-q^2)^2(1-t[(1+s)^2-1])}$$ and $$\ds A_w =  \frac{1}{(1-q^2)(1+q^2)(1-t[(1+s)(1-s)-1])}. $$
Thus 
$$\frac{ (1-q^2)(1-q^4)}{2}(A_1) = \frac{1+q^2}{2(1-t(s^2+2s))} \mbox{ and }  \ \frac{ (1-q^2)(1-q^4)}{2}(A_w) = \frac{1-q^2}{2(1+s^2t)}.$$  
The Hilbert-Poincar\'e series is then given by
\begin{align*}
& \Hilb\left( \left(C^* \otimes \TTT^*[\barE] \right)^W , q, s, t \right) = 
\frac{1+q^2}{2(1-t(s^2+2s))} + \frac{1-q^2}{2(1+s^2t)}. \\
\end{align*}

From this information, it follows that the coefficient of $t^m, \  m > 0$, is

$$
\begin{aligned}
&\frac{1}{2}\left[  (1+q^2)(s^2+2s)^m+ (1-q^2)(-s^2)^m \right ] \\
&\quad = \sum_{1 \leq j \leq m}2^{j -1}\binom{m}{j}s^{2m-j} \ + \left\lbrace\begin{array}{lr}
s^{2m} & \mbox{if $m$ is even, and } \\
q^2s^{2m} &  \mbox{if $m$ is odd. }\\
 \end{array}\right..
\end{aligned}
$$

Keeping track of the representations of $W$, an exercise left to the reader, gives an independent verification of the formula in Theorem \ref{thm:souped-up-Molien INTRO} for this special case.
\end{ex}

Theorem \ref{thm: further.stable decompositions}
states that if $G$ is a compact, connected Lie group, then there are homotopy equivalences
$$ \Sigma (G\times_{NT} \widehat{T}^m/(G/NT)) \to \Sigma(\widehat{\Ho}(\Z^m,G))$$ for all $m \geq 1$
as long as the order of the Weyl group has been inverted (spaces are localized away from $|W|$).  The reduced, real cohomology of
$G\times_{NT} \widehat{T}^m/(G/T)$ as well as
$\widehat{\Ho}(\Z^m,G))$ is given by $$\sum_{\substack{j = k_1+\cdots+k_m\\   \ i \geq 0  \\}} (C_i \otimes \wedge^{k_1} \RR^n \otimes \cdots \otimes \wedge^{k_m} \RR^n)^W= 
\sum_{\substack{j = k_1+\cdots+k_m\\   \ i \geq 0  \\}} M_{(i,j,m)})^W.$$  By Theorem \ref{theorem: stable splitting of Hom(Zn,G)}, there are homotopy equivalences $$\bigvee_{1 \leq j \leq m}\bigvee_{\binom {m}{j}}\Sigma(\widehat{\Ho}(\Z^j,G)) \to \Sigma(\Ho(\Z^m,G)).$$

\begin{cor}\label{cor: Hilbert-Poincare series for stable splitting of Hom(Zn,G)}
Let $G$ be a compact, connected  Lie group as in Definition
\ref{defn: trigraded Hilbert-Poincare series}. Then there are 
additive isomorphisms

$$ \widetilde{ H}^d(\Ho(\Z^m,G);\mathbb R) \to \sum_{1 \leq s \leq m}
\sum_{i+j = d}\bigg(  \sum_{\substack{j = k_1+\cdots+k_s\\   \ i \geq 0  \\}}  \oplus_{\binom {m}{s}} (M_{(i,j,s)})^W\bigg). $$
\end{cor}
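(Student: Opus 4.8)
The plan is to combine the stable decomposition of $\Ho(\Z^m,G)$ with the identification of the cohomology of each fundamental summand. First I would invoke Theorem~\ref{theorem: stable splitting of Hom(Zn,G)}, which gives a homotopy equivalence $\bigvee_{1 \leq s \leq m}\bigvee_{\binom{m}{s}}\Sigma(\widehat{\Ho}(\Z^s,G)) \to \Sigma(\Ho(\Z^m,G))$; passing to reduced cohomology with $\R$-coefficients and using the suspension isomorphism turns the wedge into a direct sum, so that $\widetilde{H}^d(\Ho(\Z^m,G);\R) \cong \bigoplus_{1 \leq s \leq m}\bigoplus_{\binom{m}{s}} \widetilde{H}^d(\widehat{\Ho}(\Z^s,G);\R)$. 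This reduces the problem to computing $\widetilde{H}^*(\widehat{\Ho}(\Z^s,G);\R)$ for each $s$.

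Next I would identify that last group. Since $\R$-cohomology inverts $|W|$ automatically, Theorem~\ref{thm: further.stable decompositions} applies (after localizing away from $|W|$, which is harmless rationally) to give a homotopy equivalence $\Sigma(G\times_{NT}\widehat{T}^s/(G/NT)) \to \Sigma(\widehat{\Ho}(\Z^s,G))$, hence an isomorphism $\widetilde{H}^*(\widehat{\Ho}(\Z^s,G);\R) \cong \widetilde{H}^*(G\times_{NT}\widehat{T}^s/(G/NT);\R)$. The reduced real cohomology of the latter space is, as recorded in the paragraph preceding Corollary~\ref{cor: Hilbert-Poincare series for stable splitting of Hom(Zn,G)}, the $W$-invariants $\sum_{j=k_1+\cdots+k_s,\ i\geq 0}(M_{(i,j,s)})^W$, with $M_{(i,j,s)}$ sitting in total degree $i+j$ (homological degree $i$ coming from $C = H_*(G/T;\R)$, and degree $j = k_1+\cdots+k_s$ coming from the tensor factors of $\barE$). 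Extracting the degree-$d$ part means summing over $i+j=d$.

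Assembling these two steps gives
\[
\widetilde{H}^d(\Ho(\Z^m,G);\R) \;\cong\; \bigoplus_{1 \leq s \leq m}\ \bigoplus_{\binom{m}{s}}\ \sum_{i+j=d}\ \sum_{\substack{j=k_1+\cdots+k_s\\ i\geq 0}} (M_{(i,j,s)})^W,
\]
which is exactly the claimed formula (with the order of the direct sums rearranged). The proof is thus essentially a bookkeeping argument: stable splitting, suspension isomorphism, the localized equivalence of Theorem~\ref{thm: further.stable decompositions}, and the trigraded description of the invariants.

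The main obstacle is not any deep argument but the careful tracking of gradings and the legitimacy of the rational reduction in Theorem~\ref{thm: further.stable decompositions}: one must check that the cited equivalence, stated for spaces localized away from $|W|$, indeed induces the asserted isomorphism on ordinary rational cohomology (it does, because $H^*(-;\R)$ is insensitive to localization at a set of primes containing those dividing $|W|$, so the localization map is a rational cohomology isomorphism), and that the total-degree convention—$\wedge^k\R^n$ in homological degree $k$, $C_i$ in degree $i$, so $M_{(i,j,s)}$ in degree $i+j$—matches the indexing in the statement. Once these conventions are pinned down, the identification is immediate from Theorems~\ref{theorem: stable splitting of Hom(Zn,G)} and~\ref{thm: further.stable decompositions} together with the displayed description of $\widetilde{H}^*(G\times_{NT}\widehat{T}^s/(G/NT);\R)$.
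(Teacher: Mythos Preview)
Your proposal is correct and follows exactly the route the paper takes: the corollary is stated as an immediate consequence of the paragraph preceding it, which combines Theorem~\ref{theorem: stable splitting of Hom(Zn,G)} (the stable splitting into fundamental summands) with Theorem~\ref{thm: further.stable decompositions} and the displayed identification of $\widetilde{H}^*(G\times_{NT}\widehat{T}^s/(G/NT);\R)$ with $\sum_{i,j}(M_{(i,j,s)})^W$. Your discussion of the degree bookkeeping and the rational legitimacy of the localization in Theorem~\ref{thm: further.stable decompositions} makes explicit points the paper leaves implicit, but the argument is the same.
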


\begin{rmk}\label{rmk: Poincare series for Hom(Z^n,G)}
The analogue of the Hilbert-Poincar\'e series for 
$\widetilde{ H}^*\Ho(\Z^m,G)$ can be given in terms of
that for $Comm(G)$.
\end{rmk}

\textbf{Acknowledgment:} The authors thank V. Reiner for including the appendix in this paper along with his formula giving the Hilbert-Poincar\'e series for $Comm(G)$.

\section{The James construction}\label{section: James construction}
 
{ In this section we give a quick exposition of some properties of the James construction $J(X)$, the free monoid generated by the space $X$ with the base-point in $X$ acting as the identity as given in the next definition. The properties listed here are well-known, and are stated for the convenience of the reader.

\begin{defn}{\label{jamesdefn}}
The \textit{James construction} or \textit{James reduced product}on $X$ is the space
\[J(X):= \bigsqcup_{n\geq 0} X^n /\sim ,\]
where $\sim$ is the equivalence relation generated by 
$(x_1,\dots,x_n) \sim (x_1,\dots,\widehat{x_i},\dots,x_n)$ if $x_i=\ast,$
with the convention that $X^0$ is the base-point $\ast$.
\end{defn}
 }

One of the most important properties of $J(X)$ is the existence of a map
$$\theta: J(X) \to \Omega \Sigma(X)$$ which is a homotopy equivalence in case $X$  has the homotopy type of a path-connected CW-complex \cite{james1955reduced}. The singular homology of $J(X)$ is described next.

Recall the homology of $J(X)$ for any path-connected space $X$ of the homotopy type of a CW-complex as follows. Let $R$ be a commutative ring with $1$. Assume that homology is taken with coefficients in the ring $R$ where the homology of $X$ is assumed to be $R$-free. Finally, let 
$$W = \widetilde{H}_*(X;R)$$
the reduced homology of $X$, and 
$$\T[W]$$ 
be the tensor algebra generated by $W$. 
Then the Bott-Samelson theorem gives an isomorphism of algebras
$$\T[W] \to H_*(J(X);R).$$  Although their results give a more precise description of this isomorphism as Hopf algebras,
that additional information is not used here.

Furthermore, if $X$ is of finite type, the Hilbert-Poincar\'e series for $H_*(X;R)$ is defined in the usual way as

$$\Hilb(X,t) =  \sum_{0 \leq n} d_nt^n,$$ 
where $d_n$ is the rank of $H_n(J(X);R)$ as an $R$-module (with the freeness assumptions made above). 
Then the following formula holds:
$$\Hilb(J(X),t) = 1/(1-(\Hilb(X,t)-1))= 1/(2-\Hilb(X,t)).$$

An example is described next where $X$ is a finite product of circles.
\begin{ex}\label{ex:ungraded homology of X2G}
If $$X = (S^1)^m, $$ then $$\Hilb(X,t) = (1+t)^m= \sum_{0 \leq j \leq m} \binom{{m}}j t^j,$$ thus
$$ \Hilb(J(X),t) = 1/(2- (1+t)^m) = \frac{1}{1- \sum_{1 \leq j \leq m} \binom{{m}}j t^j}.$$
\end{ex}

This procedure will be used to describe the ungraded homology groups of the spaces $Comm(G)_{1}$ for various choices of the group $G$.

\section{Proof of Theorem \ref{thm:first approximation INTRO}}\label{section:stable.decomp.Borel.constr}

Begin by proving the following theorem where it will be tacitly assumed that all spaces here are of the homotopy type of a connected CW-complex.
 
\begin{thm}{\label{decomp.thm}}
Let $Y$ be a $G$-space such that the projection $Y \longrightarrow Y/G$ is a locally trivial fibre bundle, and $X$ a $G$-space with fixed base-point $\ast$. Then there is a homotopy equivalence
\[\Sigma (Y\times_G J(X)) \simeq \Sigma \big(Y/G \vee (\bigvee_{n \geq 1} (Y\times_G \widehat{X}^n) /(Y\times_G \ast ))\big).\]
\end{thm}

Theorem \ref{thm:first approximation INTRO} will be an immediate corollary of Theorem \ref{decomp.thm}. The proof of the theorem is given below by a list of lemmas, see also \cite{may.generalized.splitting.thm}. The current proof is an equivariant splitting.

Assume $X$ and $Y$ are $G$-CW complexes satisfying the conditions of Theorem \ref{decomp.thm}. Consider the map 
$$H:J(X) \to J(\bigvee_{1 \leq q< \infty}\widehat{X}^q)$$ 
given by
\[(x_1,\dots,x_n) \mapsto \prod_{I \subset [n]}x_I,\]
where $x_I=x_{i_1}\wedge \cdots \wedge x_{i_q}$ for $I=(i_1,\dots,i_q)$ running over all admissible sequences in $[n]$, i.e. all sequences of the form $(i_1<\cdots<i_q)$, with $x_I$ having the left lexicographic order. 

Next filter by defining 
$$F_N J(\bigvee_{1 \leq q< \infty}\widehat{X}^q)$$ 
to be the image of 
$$J(\bigvee_{1 \leq q \leq N}\widehat{X}^q).$$
There is also a filtration of $J(X)$ given by
$$ {\ast}=J_0(X)\subset J_1(X)\subset J_2(X) \subset \cdots \subset J_q(X)\subset \cdots \subset J(X),$$
where $J_q(X)$ is the image of $\bigsqcup_{1 \leq i \leq q} X^i/\sim $ in $J(X)$.

The next lemmas follow by inspection.

\begin{lemma}[]{\label{filtration}}
If $X$ has a base-point, then the map $$H:J(X) \to J(\bigvee_{1 \leq q< \infty}\widehat{X}^q)$$
restricts to a map $$H:J_N(X) \to F_NJ(\bigvee_{1 \leq q< \infty}\widehat{X}^q)= J(\bigvee_{1 \leq q \leq N}\widehat{X}^q),$$ and preserves filtrations. Furthermore, the map $H:J_N(X) \to J(\bigvee_{1 \leq q \leq N}\widehat{X}^q)$ is $G$-equivariant.
\end{lemma}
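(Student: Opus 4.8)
The plan is to verify the three assertions of Lemma~\ref{filtration} directly from the explicit formula for $H$, using only the combinatorics of admissible subsequences together with Lemma~\ref{lemma: james} on the James filtration. Recall $H(x_1,\dots,x_n) = \prod_{I \subset [n]} x_I$, where $I$ runs over admissible sequences $(i_1 < \cdots < i_q)$ and $x_I = x_{i_1} \wedge \cdots \wedge x_{i_q}$. First I would observe that if $(x_1,\dots,x_n) \in J_N(X)$, i.e.\ the reduced word has length at most $N$, then every subsequence $I \subset [n]$ has length $q \leq N$, so each letter $x_I$ of the target word lies in $\widehat{X}^q$ with $q \leq N$; hence the product $\prod_{I} x_I$ lies in the submonoid generated by $\bigvee_{1 \leq q \leq N} \widehat{X}^q$, which is exactly $J(\bigvee_{1 \leq q \leq N}\widehat{X}^q)$. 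This identifies $F_N J(\bigvee_{1 \leq q < \infty}\widehat{X}^q)$ with $J(\bigvee_{1 \leq q \leq N}\widehat{X}^q)$ and shows $H$ restricts as claimed; the only subtlety is checking this is well-defined on the quotient, namely that deleting a basepoint coordinate $x_i = \ast$ does not change the image --- but any $x_I$ containing index $i$ becomes $\ast$ in the smash product and is deleted, while the remaining $x_I$ are precisely the images under $H$ of the shortened tuple, so compatibility with $\sim$ holds on both source and target.

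Second, I would check that $H$ preserves filtrations in the finer sense: the image of $J_N(X)$ under $H$ lands in $F_N$ on the target, which is the statement just proved; one should also note the two filtration indices play different roles (James filtration length on the source, smash-power degree on the target) exactly as flagged in the text, so ``preserves filtrations'' here means the compatibility $H(J_N(X)) \subseteq F_N(\cdots)$ rather than a degree-by-degree match. This follows from the length bound on admissible subsequences with no further work.

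Third, for $G$-equivariance of $H: J_N(X) \to J(\bigvee_{1 \leq q \leq N}\widehat{X}^q)$, I would simply restrict the computation already carried out in the proof of Theorem~\ref{james2}: there it is shown that $H(g \cdot (x_1,\dots,x_n)) = g \cdot H(x_1,\dots,x_n)$ because $g$ acts diagonally and the index set of admissible subsequences is $g$-independent, while $g \cdot \ast = \ast$ ensures the basepoint (and hence the smash-product structure and the deletion relation) is respected. Since the $G$-action on $J(X)$ preserves each filtration stage $J_N(X)$ (word length is $G$-invariant) and likewise preserves each $F_N$ on the target, the restriction of this equivariant map to $J_N(X)$ is again $G$-equivariant with image in $F_N = J(\bigvee_{1 \leq q \leq N}\widehat{X}^q)$.

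The main obstacle --- really the only point requiring care --- is verifying that $H$ descends to the quotients defining the James constructions on both sides simultaneously when restricted to finite filtration stages: one must check that the deletion relation $x_i \mapsto \widehat{x_i}$ in the source corresponds under the formula to deleting exactly those letters $x_I$ with $i \in I$ in the target (which become $\ast$ in $\widehat{X}^{|I|}$) and leaving a word that is literally $H$ of the truncated tuple. This is a short but slightly fiddly bookkeeping argument with admissible subsequences, and it is the same verification that underlies the well-definedness of $H$ in Theorems~\ref{james2} and \ref{decomp.thm}, so it can be cited rather than repeated.
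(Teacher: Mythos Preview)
Your proposal is correct and matches the paper's approach: the paper states Lemma~\ref{filtration} without proof, treating it as an immediate consequence of the definition of $H$ and the filtration $F_N$, and your argument is exactly the direct verification that the paper leaves implicit. The well-definedness check under the relation $\sim$ and the citation of the equivariance computation from the proof of Theorem~\ref{james2} are the right ingredients.
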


\begin{lemma}[]{\label{filtration.and.maps}}
Let $X$ be a pointed space where the group $G$ acts on the space $Y$ as well as on $X$ fixing the base-point of $X$. Then there is a map $$\widehat{H}: Y \times J(X)  \to J(\bigvee_{1 \leq q< \infty}(Y \times \widehat{X}^q))$$ defined by 
$$\widehat{H}(y;(x_1, \cdots, x_q)) = \prod _{1 \leq i_1< \cdots< i_t \leq q}(i,(x_{i_1},x_{i_2}, \cdots, x_{i_t})).$$ 
Furthermore, there is an induced map
\[
\begin{CD}
Y \times_GJ_N(X)/ Y\times_{G}\ast @>{}>> J(\bigvee_{1 \leq q \leq N}(Y \times_G \widehat{X}^q/ Y\times_{G}\ast)) \\
\end{CD}
\] 
together with a strictly commutative diagram

\[
\begin{CD}
Y \times_GJ_{N-1}(X)/ Y\times_{G}\ast @>{}>> J(\bigvee_{1 \leq q \leq N-1}(Y \times_G \widehat{X}^q/ Y\times_{G}\ast))\\
 @VV{}V    @VV{J(\mbox{inclusion})}V       \\
 Y \times_GJ_N(X)/ Y\times_{G}\ast @>{}>> J(\bigvee_{1 \leq q \leq N}(Y \times_G \widehat{X}^q/ Y\times_{G}\ast))\\
 @VV{}V    @VV{J(\mbox{projection})}V       \\
 Y \times_G (\widehat{X}^N)/ Y\times_{G}\ast @>{1}>> (Y \times_G \widehat{X}^N/ Y\times_{G}\ast).\\
\end{CD}
\]


\end{lemma}


Observe that passage to adjoints gives a commutative diagram as follows:

\[
\begin{CD}
\Sigma(Y \times_GJ_{N-1}(X)/ Y\times_{G}\ast )  @>{}>> \Sigma(\bigvee_{1 \leq q \leq N-1}(Y \times_G \widehat{X}^q/ Y\times_{G}\ast))\\
 @VV{}V    @VV{\Sigma(\mbox{inclusion})}V       \\
\Sigma (Y \times_GJ_N(X)/ Y\times_{G}\ast )@>{}>> \Sigma(\bigvee_{1 \leq q \leq N}(Y \times_G \widehat{X}^q/ Y\times_{G}\ast))\\
 @VV{}V    @VV{\Sigma(\mbox{projection})}V       \\
 \Sigma (Y \times_G (\widehat{X}^N)/ Y\times_{G}\ast) @>{1}>>\Sigma (Y \times_G \widehat{X}^N/ Y\times_{G}\ast).\\
\end{CD}
\]

The top horizontal arrow is an equivalence for $N-1= 0$ by hypothesis, and in general by the evident inductive hypothesis. The bottom arrow is an equivalence by inspection. Furthermore, the columns give a morphism of cofibration sequences by hypothesis. Thus, the middle arrow is an equivalence assuming that all spaces are of the homotopy type of a CW-complex.


Let $G$ be a compact and connected Lie group with maximal torus $T$. Consider the quotient space $G\times_{NT}J(T)$, where $NT$ acts diagonally on the product $G\times J(T)$. Then the following are immediate corollaries of Theorem \ref{decomp.thm}. Note that Theorem \ref{thm:first approximation INTRO} is the following corollary.
\begin{cor}{\label{app1}}
Let $G$ be a compact and connected Lie group with maximal torus $T$, and $NT$ acting on $T$ by conjugation and on $G$ by group multiplication. There is a homotopy equivalence
\[\Sigma (G\times_{NT} J(T)) \simeq \Sigma (G/NT \vee (\bigvee_{n \geq 1} G\times_{NT} \widehat{T}^n /G\times_{NT} \{1\} )).\]
\end{cor}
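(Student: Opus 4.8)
The plan is to derive Corollary \ref{app1} directly from Theorem \ref{decomp.thm} by specializing $Y = G$, $X = T$, and $G$ (the acting group in the theorem) to the normalizer $NT$. First I would verify the hypotheses of Theorem \ref{decomp.thm}: the group $NT$ acts on $G$ by right (or left) multiplication, and since $NT$ is a closed subgroup of the compact Lie group $G$, the projection $G \to G/NT$ is a locally trivial principal $NT$-bundle, hence in particular a locally trivial fibre bundle. The torus $T$ is an $NT$-space via the conjugation action, and this action fixes the base-point $1 \in T$ (the identity of $G$ is fixed by any conjugation). So all hypotheses of Theorem \ref{decomp.thm} are met, with $Y/G$ in the notation of that theorem becoming $G/NT$.

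Next I would simply substitute into the conclusion of Theorem \ref{decomp.thm}. That conclusion reads
$$\Sigma(Y \times_G J(X)) \simeq \Sigma\bigl(Y/G \vee (\bigvee_{n \geq 1}(Y \times_G \widehat{X}^n)/(Y \times_G \ast))\bigr),$$
and under the substitution $Y = G$, acting group $= NT$, $X = T$, base-point $\ast = 1$, this becomes exactly
$$\Sigma(G \times_{NT} J(T)) \simeq \Sigma\bigl(G/NT \vee (\bigvee_{n \geq 1}(G \times_{NT} \widehat{T}^n)/(G \times_{NT} \{1\}))\bigr),$$
which is the asserted homotopy equivalence. One small bookkeeping point to confirm along the way is that $G \times_{NT} \ast = G \times_{NT}\{1\}$ is indeed homeomorphic to $G/NT$, so the wedge summand $Y/G$ and the ``base'' space being collapsed in each quotient are the same; this is immediate since $NT$ fixes $1 \in T$.

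Since this is purely an instance of the general theorem, there is essentially no obstacle. The only thing to be slightly careful about is that Theorem \ref{decomp.thm} is stated for $Y$ a $G$-space with $Y \to Y/G$ a locally trivial fibre bundle, and one must make sure that $G$, viewed as a right $NT$-space by multiplication, genuinely satisfies this — which it does, being a compact Lie group with a closed subgroup, so that $G \to G/NT$ admits local sections. Everything else is a direct translation of notation. I would therefore present this as a one- or two-line deduction, noting that Corollary \ref{app1} is precisely Theorem \ref{thm:first approximation INTRO} and thus completes that proof as well.
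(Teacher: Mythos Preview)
Your proposal is correct and matches the paper's approach exactly: the paper states that Corollary~\ref{app1} is an immediate corollary of Theorem~\ref{decomp.thm} obtained by taking $Y=G$, $X=T$, and the acting group to be $NT$, and gives no further argument. Your additional verification of the hypotheses (that $G\to G/NT$ is a locally trivial bundle because $NT$ is closed in the compact Lie group $G$, and that conjugation by $NT$ fixes $1\in T$) is a welcome elaboration of what the paper leaves implicit.
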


\begin{cor}{\label{app1/G}}
With the same assumptions of Corollary \ref{app1}, there is a homotopy equivalence
\[\Sigma (G\times_{NT} J(T)/(G\times \{1\})) \simeq \Sigma (G/NT \vee (\bigvee_{n \geq 1} G\times_{NT} \widehat{T}^n /G\times_{NT} \{1\} ))/(G\times \{1\}).\]
\end{cor}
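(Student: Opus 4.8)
The plan is to deduce Corollary \ref{app1/G} from Corollary \ref{app1} by collapsing a common subspace on both sides of the homotopy equivalence produced there. Write $Z := G\times_{NT}J(T)$, and let $A\subseteq Z$ be the image of $G\times\{1\}\subseteq G\times J(T)$; since $NT$ fixes the monoid identity $1\in T$, the subspace $A$ is exactly $G\times_{NT}J_0(T)$ and is homeomorphic to $G/NT$. Because $\{1\}\hookrightarrow J(T)$ is an $NT$-equivariant cofibration of well-pointed spaces and $NT$ acts freely on $G$, the inclusion $A\hookrightarrow Z$ is a cofibration, and hence so is $\Sigma A\hookrightarrow\Sigma Z$. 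In the target of Corollary \ref{app1} the corresponding subspace is the wedge summand $\Sigma(G/NT)$, whose inclusion is a cofibration as well. Throughout, the symbol $(G\times\{1\})$ appearing in the statement is read as this image $A\cong G/NT$.

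The crucial step is to observe that the homotopy equivalence
\[
\phi\colon\ \Sigma Z\ \xrightarrow{\ \simeq\ }\ \Sigma\Bigl(G/NT\ \vee\ \bigvee_{n\geq 1} G\times_{NT}\widehat{T}^n/(G\times_{NT}\{1\})\Bigr)
\]
of Corollary \ref{app1} is compatible with these subspaces, restricting on $\Sigma A$ to a homotopy equivalence onto the summand $\Sigma(G/NT)$. This is visible from the inductive construction in the proof of Theorem \ref{decomp.thm}: the summand $Y/G=G/NT$ appears already at the bottom stage $q=1$ of the James filtration, produced by the splitting $\Sigma(Y\times_G X)\simeq\Sigma(Y/G)\vee\Sigma\bigl((Y\times_G X)/(Y/G)\bigr)$ of Lemma \ref{suspension.lemma}, which by construction restricts to the identity on $\Sigma(Y/G)$ up to homotopy; the later stages of the induction only attach the summands $\Sigma\bigl(G\times_{NT}\widehat{T}^n/(G\times_{NT}\{1\})\bigr)$ for $n\geq 2$ and do not disturb the $\Sigma(G/NT)$-summand.

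Given this compatibility, there is a map of cofibre sequences with vertical arrows $\phi|_{\Sigma A}$ and $\phi$, both homotopy equivalences:
\[
\Sigma A\ \longrightarrow\ \Sigma Z\ \longrightarrow\ \Sigma Z/\Sigma A,\qquad \Sigma(G/NT)\ \longrightarrow\ \Sigma\bigl(G/NT\vee\textstyle\bigvee_{n\geq1}(\cdots)\bigr)\ \longrightarrow\ (\text{cofibre}).
\]
By the gluing lemma for cofibrations the induced map of cofibres is a homotopy equivalence. Finally, the reduced suspension is a left adjoint and so commutes with quotients by cofibrations: $\Sigma Z/\Sigma A\simeq\Sigma(Z/A)=\Sigma\bigl(G\times_{NT}J(T)/(G\times\{1\})\bigr)$, while the cofibre on the right side is $\Sigma\bigl((G/NT\vee\bigvee_{n\geq1}G\times_{NT}\widehat{T}^n/(G\times_{NT}\{1\}))/(G/NT)\bigr)$, which is exactly the right-hand side of the assertion. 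This proves Corollary \ref{app1/G}.

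The main obstacle is precisely the middle step: one must be certain that the equivalence of Corollary \ref{app1} genuinely restricts to an equivalence on the section subspace $G\times_{NT}\{1\}\cong G/NT$, rather than merely mapping it into the target --- a fact not contained in the bare statement but evident from the base case $q=1$ of the proof of Theorem \ref{decomp.thm} via Lemma \ref{suspension.lemma}. If one prefers not to inspect that proof, an equivalent route is to rerun the James-filtration induction of Theorem \ref{decomp.thm} verbatim with every space replaced by its quotient by $G\times\{1\}$, checking only that the relevant inclusions and bundle projections remain cofibrations and locally trivial bundles after this collapse.
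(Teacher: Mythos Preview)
Your argument is correct. The paper itself offers no proof of Corollary~\ref{app1/G}: it is simply declared, together with Corollary~\ref{app1}, to be an ``immediate corollary of Theorem~\ref{decomp.thm}.'' Your approach---collapsing the section $A=G\times_{NT}\{1\}\cong G/NT$ on both sides of the equivalence in Corollary~\ref{app1} and invoking the gluing lemma---is a legitimate way to make that word ``immediate'' precise, and you correctly isolate the one point that needs care, namely that the equivalence of Corollary~\ref{app1} carries $\Sigma A$ onto the wedge summand $\Sigma(G/NT)$. Your justification via the base case $q=1$ of the induction in Theorem~\ref{decomp.thm} (through Lemma~\ref{suspension.lemma}) is the right place to look.

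The alternative you sketch at the end---rerunning the James-filtration induction of Theorem~\ref{decomp.thm} with everything quotiented by $G\times_{NT}\{1\}$---is probably closer in spirit to what the authors had in mind by ``immediate,'' since Theorem~\ref{decomp.thm} is phrased for general $Y$, $G$, $X$ and the quotient version is just another instance of the same inductive mechanism. Either route works; yours has the advantage of deducing the quotient statement formally from the unquotiented one rather than reproving it.
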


\section{Proof of Theorem \ref{thm: stable decompositions of X(q G)}}\label{section: f x2g}

The space $Comm(G)$ is a special case of a more general construction on $G$. Let $F_n$ be a free group on $n$ letters. Consider the descending central series of $F_n$ denoted 
$$ \cdots \subseteq \Gamma^q(F_n) \subseteq \cdots \subseteq \Gamma^3(F_n) \subseteq \Gamma^2(F_n) \subseteq F_n.$$
Then $\Z^n=F_n/\Gamma^2(F_n)=F_n/[F_n,F_n]$. Consider the quotients $F_n/\Gamma^q(F_n)$. There are spaces of homomorphisms $\Ho(F_n/\Gamma^q(F_n),G)$ which can be realized as subspaces of $G^n$ by identifying $f \in \Ho(F_n/\Gamma^q(F_n),G)$ with $(g_1,\dots,g_n)\in G^n$, where $f(x_i)=g_i$ and $x_1,\dots,x_n$ are the generators of $F_n/\Gamma^q(F_n)$. The following definition extends the definition of $Comm(G)$ to include spaces of homomorphisms $
\Ho (F_n/\Gamma^q(F_n),G)$.

\begin{defn}\label{definition: XqG}
Define spaces $X(q,G) \subset Assoc(G)$ for $q\geq 2$ as follows
\[X(q,G):=\bigsqcup_{n\geq 1} \Ho (F_n/\Gamma^q(F_n),G)/{\sim}\]
where $\sim$ is the equivalence relation defined by
\[(x_1,\dots,x_n) \sim (x_1,\dots,x_{i-1},\widehat{x_i} ,x_{i+1},\dots,x_n)\text{ if }  x_i=\ast, \]
as in Definition \ref{definition: spaces of all commuting n-tuples}. Note the space $X(q,G)$ coincides with $Comm(q,G)$ in Definition \ref{definition: spaces of all commuting n-tuples}.
\end{defn}

The inclusions $\Ho(F_n/\Gamma^q,G) \hookrightarrow \Ho(F_n/\Gamma^{q+1},G)$ obtained by precomposition of maps, induce inclusions $X(q,G) \subset X(q+1,G)$ for all $q \geq 2$. Hence, there is a filtration of $Assoc(G)$ by the spaces $X(q,G)$
$$Comm(G) = X(2,G) \subset X(3,G) \subset \cdots \subset X(q,G) \subset \cdots \subset J(G)=Assoc(G). $$

The space $X(q,G)$ stably splits if suspended once.
\begin{thm}\label{stable.x2g.prop}
Let $G$ be a compact and connected Lie group. There are homotopy equivalences

\[\Sigma Comm(G) \simeq \Sigma \bigvee_{n\geq 1} \widehat{\Ho}(\mathbb{Z}^n,G),\]

and the spaces $X(q,G)$

\[\Sigma X(q,G) \simeq \Sigma \bigvee_{n\geq 1} \widehat{\Ho}(F_n/\Gamma^q(F_n),G).\]

\end{thm}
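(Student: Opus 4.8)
The plan is to derive Theorem \ref{stable.x2g.prop} as an instance of the general James-type splitting machinery already assembled in the excerpt, applied not to a single $G$-space but to the filtered monoid $Assoc(G)$ and its ``James-length'' filtration. First I would record the key structural observation: each $X(q,G)$ is a sub-monoid of the James monoid $J(G) = Assoc(G)$, and it inherits a length filtration $F_N X(q,G)$ given by the image of $\bigsqcup_{n \leq N}\Ho(F_n/\Gamma^q(F_n),G)$, exactly parallel to the James filtration $J_N(X)$ of Definition \ref{jamesfiltration}. The successive quotients of this filtration are, by definition of the equivalence relation $\sim$, precisely the reduced spaces $\widehat{\Ho}(F_n/\Gamma^n(F_n),G) = \Ho(F_n/\Gamma^q(F_n),G)/S_n(G)$ (in the notation of Definition \ref{definition: singular subspaces of commuting n-tuples}, extended from $\Z^n$ to $F_n/\Gamma^q(F_n)$). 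The inclusions $F_{N-1}X(q,G) \hookrightarrow F_N X(q,G)$ are cofibrations with cofibre $\widehat{\Ho}(F_N/\Gamma^q(F_N),G)$, by the same argument as Lemma \ref{lemma: james}, using that $S_N(G) \hookrightarrow \Ho(F_N/\Gamma^q(F_N),G)$ is a cofibration (this is where the closed-subgroup-of-$GL_n(\C)$ hypothesis is invoked, as in the Remark following Definition \ref{definition: singular subspaces of commuting n-tuples}).

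Next I would construct the splitting map directly, mimicking the map $H$ in the proof of Theorem \ref{james2}. Namely, define
\[
H \colon X(q,G) \longrightarrow J\Big(\bigvee_{n \geq 1}\widehat{\Ho}(F_n/\Gamma^q(F_n),G)\Big)
\]
by sending a reduced word $(g_1,\dots,g_k)$ to the ordered product $\prod_{I \subset [k]} g_I$ over admissible sequences $I$, where $g_I$ is the image of the sub-tuple indexed by $I$ in the appropriate reduced piece; one checks this is well-defined because the sub-tuple of any tuple satisfying the $\Gamma^q$-relations still satisfies them, and deleting an identity coordinate does the compatible thing on both sides. Then Lemma \ref{extensionlemma} extends $H$ multiplicatively to $\widetilde H \colon J(X(q,G)) = \Omega\Sigma X(q,G) \to \Omega\Sigma\big(\bigvee_{n}\widehat{\Ho}(F_n/\Gamma^q(F_n),G)\big)$, and the claim is that the adjoint $\Sigma X(q,G) \to \Sigma\bigvee_n \widehat{\Ho}(F_n/\Gamma^q(F_n),G)$ is a homotopy equivalence.

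To prove the claim I would run exactly the induction-on-filtration argument from the proof of Theorem \ref{james2}: $H$ restricts to filtration-preserving maps $F_N X(q,G) \to F_N J(\bigvee_n \widehat{\Ho}(\cdots))$ (the analogue of Lemma \ref{filtration}), giving a ladder of cofibration sequences whose top map is an equivalence by the inductive hypothesis, whose bottom map is the identity on $\Sigma\widehat{\Ho}(F_N/\Gamma^q(F_N),G)$, and hence whose middle map $h_N$ is an equivalence; taking the colimit over $N$ gives $\Sigma X(q,G) \simeq \Sigma\bigvee_n \widehat{\Ho}(F_n/\Gamma^q(F_n),G)$. The case $q=2$ is literally the statement for $Comm(G) = X(2,G)$, since $F_n/\Gamma^2(F_n) = \Z^n$ and $\widehat{\Ho}(F_n/\Gamma^2(F_n),G) = \widehat{\Ho}(\Z^n,G)$. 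I expect the main obstacle to be purely bookkeeping: verifying that the collapse maps and the combinatorial product-over-subsets formula interact correctly with the equivalence relation $\sim$ at each filtration stage — i.e. that $H$ genuinely lands in the reduced wedge and restricts compatibly to each $F_N$ — so that the five-lemma (or, as in the alternative proof of Theorem \ref{decomp.thm}, a Serre spectral sequence comparison) applies cleanly. Everything else is a direct transcription of the arguments already given for $J(X)$ and for the Borel construction $Y \times_G J(X)$; in particular no new homotopy-theoretic input beyond Lemmas \ref{lemma: james}, \ref{extensionlemma}, \ref{filtration} and Theorems \ref{james1}, \ref{james2} is needed.
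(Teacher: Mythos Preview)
Your proposal is correct and follows essentially the same approach as the paper: filter $X(q,G)$ by word length, identify the filtration quotients as $\widehat{\Ho}(F_n/\Gamma^q(F_n),G)$, and obtain the stable splitting by restricting the James splitting map $H$ of $J(G)=Assoc(G)$ to the submonoid $X(q,G)$ and arguing via the resulting ladder of cofibrations. The paper's own proof is terser---it records the filtration, notes the cofibrations are split by coordinate deletion, and then simply asserts that the splitting is ``induced by inspection'' from the known splitting $\Sigma J(G)\simeq \Sigma\bigvee_n \widehat{G}^n$---whereas you spell out the construction of $H$ and the inductive five-lemma argument in full; but the underlying mechanism is identical. (Minor typo: in your first paragraph $\widehat{\Ho}(F_n/\Gamma^n(F_n),G)$ should read $\widehat{\Ho}(F_n/\Gamma^q(F_n),G)$.)
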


\begin{proof}

Let $ q \geq 2$. Define a filtration of $X(q,G)$ as follows
\[F_1X(q,G) \subseteq F_2X(q,G) \subseteq \cdots  \subseteq F_nX(q,G) \subseteq F_{n+1}X(q,G) \subseteq  \cdots,\] 
where by definition $F_nX(q,G)$ is the image of
$$\bigcup_{1 \leq t \leq n} \Ho(F_t/\Gamma^q(F_t),G)$$ 
in $X(q,G).$
Notice that in the special case of $q = 2$,
$\Z^n = F_n/\Gamma^2(F_n)$.

Hence, $F_nX(q,G)$ is the space of words of length at most $n$, such that the letters of the words 
are in $\Ho(F_n/\Gamma^q(F_n), G)$ (or $\Z^n$ in case $q = 2$).

In case $ q = 2$, it follows that the quotient of two consecutive stages of the filtration is 
$$F_{q+1}X(2,G)/F_{q}X(2,G) \simeq \Ho(\mathbb{Z}^{q+1},G)/S(\Ho(\mathbb{Z}^{q+1},G))=: \widehat{\Ho}(\mathbb{Z}^{q+1},G).$$ In case $ q > 2$, it was verified in \cite{adem2007commuting} that the inclusions
$$\Ho(F_{n-1}/\Gamma^q(F_{n-1}),G) \to \Ho(F_n/\Gamma^q(F_n),G)$$ are closed cofibrations. By the above remarks,
these inclusions are split. Thus there are cofibration sequences
$$\Ho(F_{n-1}/\Gamma^q(F_{n-1}),G) \hookrightarrow \Ho(F_n/\Gamma^q(F_n),G) \to C,$$ 
where $C$ is the cofiber $\Ho(F_{n}/\Gamma^q(F_{n}),G)/\Ho(F_{n-1}/\Gamma^q(F_{n-1}),G)$, and the sequences are split via the natural projection map 
$$p_n: \Ho(F_{n}/\Gamma^q(F_{n}),G) \to \Ho(F_{n-1}/\Gamma^q(F_{n-1}),G)$$
which deletes the $n$-th coordinate.

Notice that $Assoc(G) = J(G)$ splits after one suspension as 
$$\Sigma \bigvee_{1 \leq n < \infty} \widehat{G}^{n}.$$ 
The splitting of $X(q,G)$ is induced by inspection. It follows that $X(q,G)$ splits as stated.
\end{proof}

If $G$ is a closed subgroup of $GL_n(\C)$, then A. Adem and F. Cohen \cite{adem2007commuting} show that there is a homotopy equivalence
\[\Sigma (\Ho (\mathbb{Z}^n,G)) \simeq \bigvee_{1\leq k \leq n}\Sigma\big(\bigvee^{n \choose k} \Ho (\mathbb{Z}^k,G)/S(\Ho(\mathbb{Z}^{k},G)) \big).\]
Therefore, for these Lie groups, it is possible to obtain all the stable summands of $\Ho(\mathbb{Z}^n,G)$ from the stable summands of $ Comm(G)$.

\section{Proof of Theorems \ref{thm: further.stable decompositions} \& \ref{theorem: the surjection map to x(2,g)}}\label{section:x2g}

Let $G$ be a compact and connected Lie group. Let $\Ho(\mathbb{Z}^n,G)_{1_G}$ denote the connected component of the trivial representation $\mathbf{1}=(1,\dots,1)$ in $\Ho(\mathbb{Z}^n,G)$. Since $T^n$ consists of commuting $n$-tuples, is path-connected, and contains $\mathbf{1}$, it is a subspace of $\Ho(\mathbb{Z}^n,G)_1 \subseteq G^n$. In addition, $G$ acts on the space $T^n$ by coordinatewise conjugation 
\begin{align*}
\theta_n: G\times T^n &\to \Ho(\mathbb{Z}^n,G)_{1_G} \\
g\times (t_1,\dots,t_n)& \mapsto (t_1^g,\dots,t_n^g),
\end{align*}
where $t^g=gtg^{-1}$. An $n$-tuple $(h_1,\dots,h_n)$  of elements of $G$ is in $\Ho(\mathbb{Z}^n,G)_{1_G}$ if and only if there is a maximal torus such that all $h_i$ are in that torus.  
Since all maximal tori in $G$ are conjugate (see \cite{adams.lectures.on.lie.groups}), it follows that the map $\theta_n$ is surjective.

The maximal torus $T$ acts diagonally on the product $G \times T^n$ 
\[t\cdot (g,t_1,\dots,t_n) = (gt,t^{-1}t_1t,\dots,t^{-1}t_nt)=(gt,t_1,\dots,t_n).\]
Hence, $T$ acts trivially on itself. Thus the map $\theta_n$ 
factors through $(G\times T^n)/T=G\times_T T^n$. Thus there is a surjection 
$$\bar{\theta}_n: G/T\times T^n \to \Ho(\mathbb{Z}^n,G)_1.$$
Moreover, the Weyl group $W$ of $G$ acts diagonally on $G/T \times T^n$ by
\[w\cdot (gT,t_1,\dots,t_n) = (gwT,w^{-1}t_1w,\dots,w^{-1}t_nw),\]
where $gT$ is a coset in $G/T$. It follows that the map $\hat{\theta}_n$ is $W$-invariant since
\begin{align*}
(gw,w^{-1}t_1w,\dots,w^{-1}t_nw) & =((gw)w^{-1}t_1w(gw)^{-1},\dots,(gw)w^{-1}t_nw(gw)^{-1})\\
& =(gt_1 g^{-1},\dots,gt_n g^{-1}).
\end{align*}
Therefore, the map $\bar{\theta}_n$ factors through $G\times_{NT} T^n$ and so there are surjections
$$\hat{\theta}_n: G/T\times_W T^n \to \Ho(\mathbb{Z}^n,G)_{1_G}$$
for all $n$.

In what follows let $R=\Z\left[ 1/|W|\right]$ denote the ring of integers with the order of the Weyl group of $G$ inverted.

\begin{lemma}\label{fibre2}
If $G$ is a compact and connected Lie group with maximal torus $T$ and Weyl group $W$, then $H_{\ast}((\hat{\theta}_n)^{-1}(g_1,\dots,g_n);R)$ is isomorphic to the homology of a point $H_{\ast}(pt,R)$.
\end{lemma}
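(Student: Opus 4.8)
The plan is to understand the fiber $(\hat\theta_n)^{-1}(h_1,\dots,h_n)$ over a point of $\Ho(\Z^n,G)_{1_G}$ and show that, after inverting $|W|$, its homology collapses to that of a point. The starting observation is that $\hat\theta_n$ factors as the composite of the quotient map $q: G/T \times T^n \to (G/T \times T^n)/W$ followed by the map $\bar\theta_n / W$ down to $\Ho(\Z^n,G)_{1_G}$. I would first analyze the fiber of the \emph{un}-quotiented map $\bar\theta_n: G/T \times T^n \to \Ho(\Z^n,G)_{1_G}$, $(gT,t_1,\dots,t_n)\mapsto (t_1^g,\dots,t_n^g)$. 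A pair $(gT,\mathbf t)$ and $(g'T,\mathbf t')$ map to the same $n$-tuple $\mathbf h=(h_1,\dots,h_n)$ iff $g'{}^{-1}g$ normalizes the common ambient torus in a way carrying $\mathbf t$ to $\mathbf t'$; concretely the fiber is built from the set of maximal tori containing all the $h_i$ together with the choice of how $\mathbf h$ sits inside a fixed $T$. The key classical input is that for a \emph{regular} point (all $h_i$ generating a subgroup whose centralizer is a single torus) this fiber over $\bar\theta_n$ is a single $W$-orbit, i.e. finite of size $|W|$ with free $W$-action, so the fiber of $\hat\theta_n = \bar\theta_n/W$ over such a point is a single point; for special points the centralizer $Z_G(\mathbf h)$ is larger and the fiber of $\bar\theta_n$ is (a disjoint union indexed by the relevant set of tori, or better) homotopy equivalent to $Z_G(\mathbf h)/T'$ for an appropriate torus, and the residual finite group $W_{\mathbf h}$ acting is the ``Weyl group'' of the centralizer, which injects into $W$.

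The second step is to identify, for an arbitrary point $\mathbf h \in \Ho(\Z^n,G)_{1_G}$, the fiber $F=(\hat\theta_n)^{-1}(\mathbf h)$ as a quotient $\widetilde F / W_{\mathbf h}$ where $\widetilde F$ is (homotopy equivalent to) $Z_G(\mathbf h)/S$ for a maximal torus $S$ of $Z_G(\mathbf h)$ and $W_{\mathbf h}= N_{Z_G(\mathbf h)}(S)/S$ is the Weyl group of the centralizer. Since $Z_G(\mathbf h)$ is a closed subgroup of $G$ of maximal rank containing $T$ (after conjugating so $\mathbf h \subset T$), $W_{\mathbf h}$ is a subgroup of $W$, hence $|W_{\mathbf h}|$ divides $|W|$ and is invertible in $R=\Z[1/|W|]$. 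Then the standard transfer argument applies: for a finite group $\Gamma$ acting on a reasonable space $Y$ with $|\Gamma|$ invertible in $R$, the projection $Y \to Y/\Gamma$ induces $H_*(Y/\Gamma;R)\cong H_*(Y;R)^{\Gamma}$. Applying this with $Y=\widetilde F \simeq Z_G(\mathbf h)/S$ and $\Gamma = W_{\mathbf h}$ gives
\[
H_*(F;R)\;\cong\; H_*\!\big(Z_G(\mathbf h)/S;R\big)^{W_{\mathbf h}}.
\]
Finally, $H^*(Z_G(\mathbf h)/S;R)$ is a free $R$-module with $W_{\mathbf h}$-action the usual one on the coinvariant-type algebra, and the invariants of $H^*(G'/S)$ under the Weyl group $W_{\mathbf h}$ of a compact connected group $G'=Z_G(\mathbf h)$ of maximal rank are concentrated in degree $0$ (this is the well-known fact that $H^*(G'/S;R)$ is, as a $W_{\mathbf h}$-module, the regular representation tensored up appropriately, so its invariants are $R$ in degree $0$). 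Hence $H_*(F;R)\cong H_*(\mathrm{pt};R)$.

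The step I expect to be the main obstacle is the geometric identification in Step~2: giving a clean, correct description of the fiber $(\bar\theta_n)^{-1}(\mathbf h)$ for an \emph{arbitrary} (non-regular) $n$-tuple $\mathbf h$, i.e. showing it is homotopy equivalent to $Z_G(\mathbf h)/T$ (or the appropriate flag-type space) with the residual action being exactly by $W_{\mathbf h}=N_{Z_G(\mathbf h)}(T)/T \le W$. This requires care about which maximal tori of $G$ contain the set $\{h_1,\dots,h_n\}$ and organizing them via the transitive conjugation action of $Z_G(\mathbf h)$ on the maximal tori of $Z_G(\mathbf h)$; once this is set up correctly, the transfer/invariants computation and the vanishing of higher $W_{\mathbf h}$-invariants of $H^*(Z_G(\mathbf h)/T)$ are standard. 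One should also check the mild point-set hypotheses (that the quotients behave well, using that $G$ is a closed subgroup of some $GL_N(\C)$, as assumed) so that the transfer isomorphism is valid.
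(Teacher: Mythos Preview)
The paper does not give its own proof of this lemma; it simply cites Baird \cite{bairdcohomology}, Lemma~3.2. Your outline is essentially Baird's argument: identify the fiber of $\hat\theta_n$ over $\mathbf h$ with $Z/N_Z(T)$ for $Z$ the centralizer of $\mathbf h$ (equivalently, the space of maximal tori of $G$ containing all $h_i$), then use the transfer for the finite group $W_Z=N_Z(T)/T\le W$ together with the fact that $H^*(Z/T;R)^{W_Z}$ is concentrated in degree~$0$.

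There is one genuine gap. You assert that $G'=Z_G(\mathbf h)$ is a \emph{compact connected} group and invoke the regular-representation description of $H^*(G'/T)$ for such groups. But the centralizer of a tuple in a merely compact connected $G$ need not be connected (e.g.\ $G=SO(3)$, $h$ a half-turn, $Z_G(h)\cong O(2)$); connectedness of centralizers is guaranteed only under extra hypotheses such as simple-connectivity of $G$, which the lemma does not assume. The fix is straightforward: work with the identity component $Z^0=Z_G(\mathbf h)^0$ throughout. The fiber of $\hat\theta_n$ is exactly the space of maximal tori of $G$ containing all $h_i$; any such torus lies in $Z^0$ and is a maximal torus there, so this space is $Z^0/N_{Z^0}(T)=(Z^0/T)/W_{Z^0}$ with $W_{Z^0}=N_{Z^0}(T)/T\le W$. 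Since $|W_{Z^0}|$ is invertible in $R$, the transfer and the regular-representation fact now apply verbatim to the connected group $Z^0$ and give $H_*(F;R)\cong H_*(Z^0/T;R)^{W_{Z^0}}\cong R$.

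Incidentally, the step you flagged as the main obstacle---the geometric identification of the fiber---is the easy part once you note that, projecting away the determined $\mathbf t$-coordinate, the fiber of $\bar\theta_n$ inside $G/T$ is $\{gT:gTg^{-1}\supset\{h_i\}\}$, and its quotient by the free right $W$-action is precisely the set of maximal tori containing~$\mathbf h$.
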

\begin{proof} 

See \cite[Lemma 3.2]{bairdcohomology} for a proof. Note that $\Q$ suffices for the ring of coefficients, but so does $R$.
\end{proof}
The following theorem is recorded to prove the next result. See \cite{bairdcohomology} for a proof.
\begin{thm}[Vietoris \& Begle]\label{viet-begle}
Let $h:X \longrightarrow Y$ be a closed surjection, where $X$ is a paracompact Hausdorff space. Suppose that for all $y \in Y$, $H_{\ast}(h^{-1}(y), R)= H_{\ast}(pt, R)$. Then the induced maps in homology
$h_{\ast}:H_{\ast}(X, R) \longrightarrow H_{\ast}(Y, R)$
are isomorphisms.
\end{thm}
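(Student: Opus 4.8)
The plan is to derive this from the Leray sheaf-theoretic spectral sequence of the map $h$, which is the standard route to the Vietoris--Begle mapping theorem in the form stated; see the sheaf-theory literature (e.g. Bredon's \emph{Sheaf Theory}) and the discussion in \cite{bairdcohomology}. A preliminary remark: the theorem is applied in this paper only to maps between compact, finite-dimensional, locally contractible spaces (the relevant homomorphism spaces are real semialgebraic sets), where singular and \v{C}ech (co)homology coincide, all homology groups over $R$ are finitely generated, and continuous surjections of compact Hausdorff spaces are automatically closed. I would therefore add local contractibility as a standing hypothesis and carry out the argument in that setting; the fully general statement is then the special case obtained by quoting the sheaf-theoretic version verbatim.

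First I would translate the hypothesis into \v{C}ech cohomology. Since $R=\Z[1/|W|]$ is a principal ideal domain, the isomorphism $H_\ast(h^{-1}(y);R)\cong H_\ast(\mathrm{pt};R)$ together with the universal coefficient theorem gives $\widetilde H^\ast(h^{-1}(y);R)=0$, and local contractibility makes this \v{C}ech cohomology; so each fibre $F_y:=h^{-1}(y)$ is \v{C}ech $R$-acyclic. Because $X$ is paracompact Hausdorff and $h$ is closed, $Y$ is paracompact Hausdorff, and for each open $U\subseteq Y$ the preimage $h^{-1}(U)$ is open, hence paracompact Hausdorff. There is then a Leray spectral sequence
$$E_2^{p,q}=\check H^p\!\left(Y;\mathcal H^q\right)\ \Longrightarrow\ \check H^{p+q}(X;R),$$
where $\mathcal H^q$ is the sheaf on $Y$ associated to the presheaf $U\mapsto\check H^q(h^{-1}(U);R)$, and the edge homomorphism $\check H^p(Y;\mathcal H^0)\to\check H^p(X;R)$ is induced by $h^\ast$.

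The crucial step is the stalk computation, which is where closedness of $h$ enters. The stalk of $\mathcal H^q$ at $y$ is $\varinjlim_{U\ni y}\check H^q(h^{-1}(U);R)$. Given any open $V\supseteq F_y$ in $X$, the set $X\setminus V$ is closed, so $h(X\setminus V)$ is closed and misses $y$; hence $U:=Y\setminus h(X\setminus V)$ is an open neighbourhood of $y$ with $h^{-1}(U)\subseteq V$. Thus $\{h^{-1}(U):U\ni y\}$ is a neighbourhood basis of $F_y$, and the tautness (continuity) property of \v{C}ech cohomology on paracompact Hausdorff spaces identifies the stalk with $\check H^q(F_y;R)$, which by the previous paragraph is $R$ for $q=0$ and $0$ for $q>0$. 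Hence $\mathcal H^0$ is the constant sheaf $\underline R$ and $\mathcal H^q=0$ for $q>0$, so the spectral sequence collapses and $h^\ast\colon\check H^p(Y;R)\to\check H^p(X;R)$ is an isomorphism for every $p$; under local contractibility this is an isomorphism on singular cohomology with $R$-coefficients in all degrees. Finally, to obtain the homology statement: since the spaces involved have finitely generated $R$-homology and $R$ is a PID, a map inducing an isomorphism on $H^\ast(-;R)$ in every degree induces an isomorphism on $H_\ast(-;R)$ — apply universal coefficients to the mapping cone $C$ of $h$: the hypothesis forces each $H_n(C;R)$ to be both torsion (since $\mathrm{Hom}_R(H_n(C;R),R)=0$) and free (since $\mathrm{Ext}^1_R(H_{n-1}(C;R),R)=0$), hence zero. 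Alternatively one runs the entirely parallel argument directly in \v{C}ech homology.

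The main obstacle is not the spectral-sequence bookkeeping but the mismatch between the flavours of (co)homology in the literal statement: the hypothesis and conclusion are phrased in singular homology, whereas Leray's spectral sequence is intrinsically a \v{C}ech/sheaf construction and the stalk identification above needs tautness of \v{C}ech cohomology. I would handle this precisely through the local-contractibility assumption, which holds for every space to which the theorem is applied here, and I would state that restriction explicitly. An alternative that avoids sheaves — closer to Vietoris's original argument — is to pull finite open covers of $Y$ back to $X$, compare their nerves, and induct on the size of the cover using Mayer--Vietoris together with the acyclic-fibre hypothesis to control the \v{C}ech (co)homology of preimages of small open sets; this is more elementary but considerably longer to write, and the Leray approach subsumes it.
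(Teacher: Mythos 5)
Your proposal is correct in substance, but it takes a different route from the paper for a simple reason: the paper does not prove this theorem at all — its ``proof'' is the single line ``See \cite{bairdcohomology} for a proof,'' i.e.\ an appeal to Baird, who in turn invokes the classical Vietoris--Begle mapping theorem for \v{C}ech/Alexander--Spanier cohomology on the compact, triangulable spaces at hand. You instead supply a self-contained argument: Leray spectral sequence of $h$ with sheaf coefficients, stalk identification via closedness of $h$ plus tautness of closed subspaces of paracompact Hausdorff spaces, collapse of the spectral sequence, and a universal-coefficients argument on the mapping cone to pass from a cohomology isomorphism to the stated homology isomorphism. Your insistence on adding a niceness hypothesis is not pedantry: as literally stated (singular homology, arbitrary paracompact Hausdorff $X$) the theorem is false — collapsing the limit arc of the Warsaw circle onto a genuine circle is a closed surjection of compact metric spaces with acyclic point-inverses that kills $H_1$ — so the statement is only true in a \v{C}ech-type theory or under local niceness of the spaces involved, which is exactly the regime (compact semialgebraic sets) in which the paper applies it. What the paper's citation buys is brevity and the correct level of generality for the application; what your argument buys is an explicit mechanism showing where closedness and acyclicity of fibres enter, and it doubles as a proof of Lemma \ref{fibre2}'s consequence, Theorem \ref{thm: first approximation}, without a black box.

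Two small caveats you should make explicit if this were written up. First, local contractibility of $X$ and $Y$ does not by itself make the fibres taut or identify their singular and \v{C}ech cohomology; you need the fibres themselves to be nice, which holds here because point-inverses of semialgebraic maps are semialgebraic (you gesture at this but the hypothesis as you state it is on $X$ and $Y$ only). Second, Hausdorffness and paracompactness of $Y$ are not formal consequences of the stated hypotheses (a closed image of a Hausdorff space need not be Hausdorff), so either add them to your standing assumptions or note that in every application $Y$ is a compact metric space; the same remark covers the finite generation of $H_*(-;R)$ used in your cone argument.
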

Let $Comm(G)_{1_G}$ be the connected component of the trivial representation, see Definition  \ref{definition: G to Comm(G)}.

\begin{thm}\label{thm: first approximation}
Let $G$ be a compact and connected Lie group with maximal torus $T$ and Weyl group $W$. Then there
is an induced map $$\Theta: G \times_{NT} J(T) \to Comm(G)_{1_G}$$ together with a commutative diagram for all $n \geq 0$ given as follows:

\[
\begin{CD}
G \times T^m @>{\theta_m}>> \Ho(\Z^m,G)_{1_G} \\
 @VV{}V    @VV{}V       \\
G \times_{N_{T}}J(T)  @>{\Theta}>> Comm(G)_{1_G}.\\
\end{CD}
\]

Furthermore, $\Theta$ is a surjection and the homotopy theoretic fibre of this map $\Theta: G \times_{N_{T}}J(T) \to Comm(G)$
has reduced singular homology, which is entirely torsion of an order which divides the order of the Weyl group.
\end{thm}

\begin{proof}
There is an induced map
$$\Theta: G/T \times_{W} J(T) \to Comm(G)_{1_G},$$
which is a surjection.
Define $J_n(T)$ to be the $n$-th stage of the James construction defined by
the image of 
$$\bigsqcup_{0 \leq q \leq n}T^n$$ 
as a subspace of $J(T)$ with 
$$J(T) = {\colim} J_n(T)$$ 
for path-connected CW-complexes $T$. 
Next define $Comm_n(G)$ to be the image of 
$$\bigsqcup_{0 \leq q \leq n}\Ho(\mathbb Z^q,G)$$
as a subspace of $Comm(G)$ with  
$$Comm(G) = {\colim}Comm_n(G).$$
Then $\Theta$ restricts to maps
$$\Theta: G/T \times_{W} J_n(T) \to Comm_n(G)$$
for all integers $n\geq 1$.

Now consider the surjections
$$\hat{\theta}_n: G/T\times_W T^n \to \Ho(\mathbb{Z}^n,G)_{1_G}$$
which induce homology isomorphisms if $|W|$ is inverted. These maps restrict to surjections
$$\hat{\theta}_n: G/T\times_W S_n(T) \to S_n(G)$$
which similarly induce homology isomorphisms, where $S_n(T)$ is the set of all $n$-tuples in $T^n$ with at least one element the identity $1_G$. Note that $S(\Ho(\mathbb Z^n,T)) = S_n(T)$.

Moreover, the inclusions
$$ G/T\times_W S_n(T) \subset G/T\times_W T^n$$
and
$$S_n(G) \subset \Ho(\mathbb{Z}^n,G)_{1_G}$$
are cofibrations with cofibers $G/T\times_W \widehat{T}^n/(G/NT)$ and $\widehat{\Ho}(\Z^n,G)_{1_G}$, respectively. Equivalently, there is a commutative diagram of cofibrations

\[
\begin{CD}
G/T\times_W S_n(T) @>{i}>> G/T\times_W T^n @>>> G/T\times_W \widehat{T}^n/(G/NT)\\
 @VV{\hat{\theta}_n}V    @VV{\hat{\theta}_n}V     @VVV  \\
S_n(G)  @>{\Theta}>>  \Ho(\mathbb{Z}^n,G)_{1_G} @>>> \widehat{\Ho}(\Z^n,G)_{1_G}.\\
\end{CD}
\]

The five lemma applied to the long exact sequences in homology for the cofibrations shows that the maps
$$G/T\times_W \widehat{T}^n/(G/NT) \to \widehat{\Ho}(\Z^n,G)_{1_G} $$ 
induce isomorphisms in homology with the same coefficients. Note that from the stable decompositions in Theorems \ref{thm:first approximation INTRO} and \ref{thm: stable decompositions of X(q G)}, it follows that the cofibers above are the stable summands of the spaces $G/T \times_W J(T)$ and $Comm(G)_{1_G}$, respectively. The map 
$$\Theta: G/T \times_{W} J(T) \to Comm(G)_{1_G}$$
induces a map on the level of stable decompositions which is a homology isomorphism in each summand. Therefore, $\Theta$ induces a homology isomorphism with coefficients in $\Z[1/|W|]$.
\end{proof}

Note that this also proves Theorem \ref{thm: further.stable decompositions}.

\section{Proof of Theorem \ref{thm: homology of x2g}}\label{section: homology of x2g}

Let $G$ be a compact and connected Lie group and $R$ be the ring $\Z \left[{1}/{|W|}\right]$. Using Lemma \ref{fibre2} and Theorems \ref{viet-begle} and \ref{thm: first approximation}, the following theorem holds.
\begin{thm}\label{theorem: homology of GxJ(T)/NT}
Let $G$ be a compact and connected Lie group with maximal torus $T$ and Weyl group $W$. Then there is an isomorphism in homology
\[H_{\ast}(G \times_{NT} J(T); R ) \cong H_{\ast} ( Comm(G)_1; R ).\]
\end{thm}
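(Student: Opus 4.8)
The plan is to deduce Theorem~\ref{theorem: homology of GxJ(T)/NT} by combining the two facts already assembled in the excerpt: the Vietoris--Begle mapping theorem (Theorem~\ref{viet-begle}) together with the fibre computation (Lemma~\ref{fibre2}), and the stable-summand analysis carried out in the proof of Theorem~\ref{thm: first approximation}. Concretely, I would first observe that the surjection $\Theta\colon G\times_{NT}J(T)\to Comm(G)_{1_G}$ produced in Theorem~\ref{thm: first approximation} is a closed surjection of paracompact Hausdorff spaces: $G$ is compact, $T$ is a compact CW-complex so $J(T)$ is a CW-complex (and $G\times_{NT}J(T)$ is built as a colimit $G/T\times_W J_n(T)$ of compact spaces), and $Comm(G)_{1_G}\subseteq Assoc(G)=J(G)$ is likewise a CW-complex; hence the continuity and properness hypotheses of Theorem~\ref{viet-begle} are met.

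Next I would identify the point-preimages of $\Theta$. The map $\Theta$ is assembled filtration-wise from the maps $\hat\theta_n\colon G/T\times_W T^n\to\Ho(\Z^n,G)_{1_G}$, and Lemma~\ref{fibre2} states precisely that $H_*(\hat\theta_n^{-1}(g_1,\dots,g_n);R)\cong H_*(\mathrm{pt};R)$ for every point. Since the relation $\sim$ collapsing coordinates equal to $1_G$ is applied identically on both sides of $\Theta$, the preimage under $\Theta$ of the class $[g_1,\dots,g_n]\in Comm(G)_{1_G}$ is the corresponding quotient of $\hat\theta_n^{-1}(g_1,\dots,g_n)$ (one deletes identity coordinates, which does not alter the $R$-homology type of a point); so $H_*(\Theta^{-1}(y);R)\cong H_*(\mathrm{pt};R)$ for all $y\in Comm(G)_{1_G}$. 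Applying Theorem~\ref{viet-begle} with $h=\Theta$, $R=\Z[1/|W|]$ then yields the isomorphism $\Theta_*\colon H_*(G\times_{NT}J(T);R)\xrightarrow{\ \cong\ }H_*(Comm(G)_{1_G};R)$, which is exactly the statement.

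Alternatively — and this is the route the proof of Theorem~\ref{thm: first approximation} already set up — one can avoid re-checking the Vietoris--Begle hypotheses directly on $\Theta$ by using the compatible stable decompositions: the five-lemma argument there shows each map on cofibres $G/T\times_W\widehat T^{\,n}/(G/NT)\to\widehat{\Ho}(\Z^n,G)_{1_G}$ is an $R$-homology isomorphism, and by Theorems~\ref{thm:first approximation INTRO} and~\ref{thm: stable decompositions of X(q G)} these cofibres are precisely the stable wedge summands of $\Sigma(G\times_{NT}J(T))$ and $\Sigma Comm(G)_{1_G}$ respectively. Summing over $n$ gives the claimed isomorphism. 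I would present the Vietoris--Begle version as the main line and remark that the second version follows from the material in Section~\ref{section:x2g}.

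The main obstacle is the point-set bookkeeping required to legitimately apply Theorem~\ref{viet-begle}: one must confirm that $\Theta$ is a \emph{closed} map (not merely continuous and surjective) and that $G\times_{NT}J(T)$, assembled as a colimit of the compact spaces $G/T\times_W J_n(T)$, is paracompact Hausdorff, and that the fibre identification survives passage through the colimit and through the coordinate-collapsing relation $\sim$. Once the fibres are seen to be $R$-acyclic over every point, the conclusion is immediate from Theorem~\ref{viet-begle}; the only genuinely delicate ingredient, the acyclicity of $\hat\theta_n^{-1}(g_1,\dots,g_n)$, is imported wholesale from Lemma~\ref{fibre2} (i.e.\ \cite[Lemma~3.2]{bairdcohomology}).
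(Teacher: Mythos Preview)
Your proposal is correct, and your ``alternative'' route is precisely what the paper does. In the paper the proof of Theorem~\ref{theorem: homology of GxJ(T)/NT} is a one-line citation of Lemma~\ref{fibre2}, Theorem~\ref{viet-begle}, and Theorem~\ref{thm: first approximation}; the substance is already in the last paragraph of the proof of Theorem~\ref{thm: first approximation}, which applies Vietoris--Begle to the \emph{finite-stage} maps $\hat\theta_n$ (and their restrictions to $S_n$), uses the five lemma on the resulting cofibre sequences, and then passes to the colimit via the compatible stable decompositions of Theorems~\ref{thm:first approximation INTRO} and~\ref{thm: stable decompositions of X(q G)}.

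The difference between your main line and the paper's is therefore where Vietoris--Begle is invoked. You apply it once, directly to $\Theta$ on the infinite-dimensional space $G\times_{NT}J(T)$; this is cleaner conceptually but, as you note, forces you to verify paracompactness of the colimit and closedness of $\Theta$. The paper instead applies Vietoris--Begle only to the compact maps $\hat\theta_n\colon G/T\times_W T^n\to \Ho(\Z^n,G)_{1_G}$, where the hypotheses are automatic, and then lets the stable splittings do the assembly. Your fibre identification $\Theta^{-1}([g_1,\dots,g_n])=\hat\theta_n^{-1}(g_1,\dots,g_n)$ for a reduced word is correct (conjugation preserves the identity, so no length change occurs), and the basepoint fibre $G/NT$ is handled by the $n=0$ case of Lemma~\ref{fibre2}; so your direct argument goes through once the point-set bookkeeping is done. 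Either approach is fine, but the paper's choice sidesteps the technicalities you flagged.
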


\begin{thm}\label{theorem: homology of x2g1}
Let $G$ be a compact and connected Lie group with maximal torus $T$ and Weyl group $W$. Then there is an isomorphism in homology
\[H_{\ast}( Comm(G)_1; R) \cong \big( H_{\ast}(G/T;R) \otimes_{R}  \T[V]\big)_W .\]
\end{thm}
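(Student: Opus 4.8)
The plan is to combine the homology isomorphism of Theorem \ref{theorem: homology of GxJ(T)/NT} with a computation of $H_*(G \times_{NT} J(T); R)$ via a Serre spectral sequence argument, exploiting that $|W|$ is invertible. First I would replace $G \times_{NT} J(T)$ by the homotopy quotient of the $W$-action: since $NT$ acts on $J(T)$ through $W = NT/T$ and acts on $G$ so that $G \to G/T$ is a principal $T$-bundle, there is a fibration sequence whose relevant shape is $(G/T) \times J(T) \to G \times_{NT} J(T) \to BW$, or more precisely $G \times_{NT} J(T) \simeq (G/T \times J(T))_{hW}$ up to $R$-homology, because $T$ acts trivially on $J(T)$ (as recorded in Section \ref{section:x2g}, where $t \cdot (gt, t_1, \dots, t_n) = (gt, t_1, \dots, t_n)$) and the $T$-action on $G$ produces $G/T$. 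So $H_*(G \times_{NT} J(T); R) \cong H_*((G/T \times J(T))_{hW}; R)$.

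Next, because $|W|$ is inverted in $R$, the homotopy-orbit spectral sequence $H_p(W; H_q(G/T \times J(T); R)) \Rightarrow H_{p+q}((G/T \times J(T))_{hW}; R)$ collapses: the higher group homology of the finite group $W$ with coefficients in an $R[W]$-module vanishes, so the spectral sequence is concentrated in the $p = 0$ column and the homology is simply the coinvariants $H_*(G/T \times J(T); R)_W$. Then I would apply the Künneth theorem — valid since $R$ is a PID and the homology groups of $G/T$ and of $J(T)$ are $R$-free (the first from the Bruhat/Schubert cell decomposition of $G/T$, the second because $J(T) \simeq \Omega \Sigma T$ by Theorem \ref{james1} and the James splitting gives $H_*(J(T)) \cong \T[\widetilde H_*(T)]$ as an $R$-module) — to identify $H_*(G/T \times J(T); R) \cong H_*(G/T; R) \otimes_R \T[V]$, where $V = \widetilde H_*(T; R)$. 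The $W$-action is the diagonal one coming from the conjugation action on $T$ (hence on $J(T)$ and on $V$) and the right-translation action on $G/T$, so taking $W$-coinvariants yields $\big( H_*(G/T; R) \otimes_R \T[V]\big)_W$, which is the claimed answer.

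The main obstacle I anticipate is justifying the identification $G \times_{NT} J(T) \simeq (G/T \times J(T))_{hW}$ cleanly and the collapse of the relevant spectral sequence at the chain level, i.e. making precise that the Borel construction for the $NT$-action reduces, after inverting $|W|$, to $W$-coinvariants of $H_*(G/T \times J(T); R)$. One must be careful that $G/T \times J(T)$ is not a free $W$-space (the $W$-action on $G/T$ by right translations is free, however, so $G \times_{NT} J(T) = (G/T) \times_W J(T)$ really is a genuine — not merely homotopy — quotient with the $W$-action on the $G/T$ factor free), which is exactly what makes $(G/T) \times_W J(T) \to (G/T)/W$ a fiber bundle with fiber $J(T)$ and lets the argument go through with an honest Serre spectral sequence; the averaging/transfer argument over $W$ (legitimate since $|W| \in R^\times$) forces the differentials and extensions to behave, giving the coinvariants. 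A secondary technical point is the freeness of $H_*(J(T); R)$ needed for Künneth, but this follows from the James model together with the fact that $\widetilde H_*(T; R)$ is free over $R$.
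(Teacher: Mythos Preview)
Your proposal is correct and follows essentially the same route as the paper: both use the fibration $G/T \times J(T) \to G \times_{NT} J(T) \to BW$ (obtained from the extension $1 \to T \to NT \to W \to 1$ together with the triviality of the $T$-action on $J(T)$), collapse the associated Serre/Leray spectral sequence to the $p=0$ column because $|W|$ is a unit in $R$, identify the result with $W$-coinvariants, apply K\"unneth using the torsion-freeness of $H_*(G/T)$ and of $H_*(J(T))\cong \T[V]$, and then invoke Theorem~\ref{theorem: homology of GxJ(T)/NT}. Your extra remarks about the homotopy quotient versus the genuine quotient are fine but unnecessary here, since the $W$-action on $G/T$ is already free, which is exactly what the paper uses implicitly.
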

\begin{proof}
There is a short exact sequence of groups 
$$1 \to T \to NT \to W \to 1$$ 
and associated to it, there is a fibration sequence 
\[\big(G \times J(T)\big)/T \to \big(G \times J(T)\big)/NT \to BW,\] 
which is equivalent to the fibration
\[G \times_T J(T) \longrightarrow G \times_{NT} J(T) \longrightarrow BW.\]
The Leray spectral sequence has second page given by the groups
\[E^2_{p,q}=H_p(BW; H_q(G\times_T J(T); R))\]
which converges to $H_{p+q}(G\times_{NT} J(T); R)$. Since $|W|^{-1} \in R$, it follows that $E^2_{s>0,t}=0$ and the groups on the vertical axis are given by 
\[E^2_{0,t}=H_0\big( BW; H_t(G \times_{T} J(T);R)\big).\]
Recall that homology in degree 0 is given by the coinvariants
\[ H_0\big( BW; H_t(G \times_{T} J(T);R)\big) = \big(H_t(G \times_T J(T);R)\big)_W.\]
Also $T$ acts by conjugation and thus trivially on $T^n$, so it acts trivially on $J(T)$. Hence, $G \times_T J(T)=G/T \times J(T)$. 

The \textit{flag variety} $G/T$ has torsion free integer homology, see \cite{bott}, and so does $J(T)$. So the homology of $G/T \times J(T)$ with coefficients in $R$ is given by the following tensor product
\[ H_t(G \times_{T} J(T);R) =\bigoplus_{i+j=t} \big[ H_i(G/T;R) \otimes_{R}  H_j (J(T);R) \big].\]
The spectral sequence collapses at the $E^2$ term as stated above. Hence, 
\begin{align*}
H_0(BW;H_t(G \times_{T} J(T);R))\cong \bigg( \bigoplus_{i+j=t} \big[ H_i(G/T;R) \otimes_{R}  H_j (J(T);R) \big]\bigg)_W.
\end{align*}
Using Theorem \ref{theorem: homology of GxJ(T)/NT}, it follows that 
\[H_{\ast} ( Comm(G)_1; R ) \cong H_{\ast}(G \times_{NT} J(T);R) = \big( H_{\ast}(G/T;R) \otimes_{R}  H_{\ast} (J(T);R)\big)_W.\]
Recall that the homology of $J(T)$ is the tensor algebra on the reduced homology of $T$. Let $\T[V]$ denote the tensor algebra on the reduced homology of $T$, denoted by $V$. Then there is an isomorphism
\[ H_{\ast} ( Comm(G)_1; R ) = \big( H_{\ast}(G/T;R) \otimes_{R}  \T[V]\big)_W.\]
\end{proof}

If $G$ has the property that every abelian subgroup is contained in a path-connected abelian subgroup, then $\Ho(\Z^n,G)$ is path-connected, see \cite{adem2007commuting}. Some of these groups include $U(n)$, $SU(n)$ and $\Sp(n)$. In this case $Comm(G)$ is also path-connected and it follows that  $Comm(G)_1=Comm(G)$.

\begin{cor}
Let $G$ be a compact, connected Lie group with maximal torus $T$ and Weyl group $W$, such that  every abelian subgroup is contained in a path-connected abelian subgroup. Then there is an isomorphism in homology
\[H_{\ast}( Comm(G); R) \cong \big( H_{\ast}(G/T;R) \otimes_{R}  \T[V]\big)_W .\]
\end{cor}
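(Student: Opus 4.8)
The plan is to reduce the statement to Theorem~\ref{theorem: homology of x2g1} by verifying that the stated hypothesis forces $Comm(G) = Comm(G)_{1_G}$, after which the desired isomorphism is obtained by a direct substitution.

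\textbf{Step 1: connectivity of $Comm(G)$.} First I would invoke the result of \'Adem--Cohen \cite{adem2007commuting} quoted above: if every abelian subgroup of $G$ lies in a path-connected abelian subgroup, then each space $\Ho(\Z^q,G)$ is path-connected. (For $G$ compact and connected this is the same as the hypothesis of Proposition~\ref{prop: connected Comm(G)}, namely that every abelian subgroup lies in a maximal torus.) By Definition~\ref{definition: spaces of all commuting n-tuples}, $Comm(G)$ is the quotient of $\bigsqcup_{q\geq 0}\Ho(\Z^q,G)$ by the identity-deletion relation, and $Comm(G)=\colim_n Comm_n(G)$ where $Comm_n(G)$ is the image of $\bigsqcup_{0\leq q\leq n}\Ho(\Z^q,G)$. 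The image of each $\Ho(\Z^q,G)$ in $Comm(G)$ is the continuous image of a path-connected space, hence path-connected, and it contains the class $E(1_G)$ of Definition~\ref{definition: G to Comm(G)}, since in $\Ho(\Z^q,G)$ the tuple $(1_G,\dots,1_G)$ maps to that class upon deleting every coordinate. These images cover $Comm(G)$ and share the common point $E(1_G)$, so $Comm(G)$ is path-connected; therefore $Comm(G)=Comm(G)_{1_G}$.

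\textbf{Step 2: substitution.} Theorem~\ref{theorem: homology of x2g1} asserts $H_{\ast}(Comm(G)_{1_G};R)\cong \big(H_{\ast}(G/T;R)\otimes_{R} \T[V]\big)_W$, where $V$ is the reduced homology of $T$ and $R=\Z[1/|W|]$. Combining this with the identification $Comm(G)=Comm(G)_{1_G}$ from Step~1 yields the claimed isomorphism $H_{\ast}(Comm(G);R)\cong \big(H_{\ast}(G/T;R)\otimes_{R} \T[V]\big)_W$.

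All of the real content lives in Theorem~\ref{theorem: homology of x2g1}, so there is no substantial obstacle here; the only point requiring (elementary) care is the propagation of path-connectedness from the building blocks $\Ho(\Z^q,G)$ to the colimit $Comm(G)$ through the common basepoint $E(1_G)$.
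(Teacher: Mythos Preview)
Your proposal is correct and follows essentially the same route as the paper: the paper simply notes that under the hypothesis each $\Ho(\Z^n,G)$ is path-connected (citing \cite{adem2007commuting}), hence $Comm(G)$ is path-connected and $Comm(G)=Comm(G)_{1_G}$, after which Theorem~\ref{theorem: homology of x2g1} applies directly. Your Step~1 spells out the connectivity of $Comm(G)$ via the common basepoint $E(1_G)$ in slightly more detail than the paper does, but the argument is the same.
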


To find the homology groups of $Comm(G)_1$ explicitly, it is necessary to find the coinvariants
\[\big( H_{\ast}(G/T;R) \otimes_{R}  \T[V]\big)_W .\]
This leads the subject to representation theory. Note that Theorem \ref{theorem: homology of x2g1} can also be used to study the cases of the compact and connected simple exceptional Lie groups $G_2,F_4,E_6,E_7$ and $E_8$.

\section{Proof of Theorem \ref{theorem:ungraded homology of X2G INTRO}}\label{section: ungraded homology of X2G}

Let $H_{\ast}^U$ denote ungraded homology and $\T_U [V]$ denote the ungraded tensor algebra over $V$, where $V$ is the reduced homology of $T$ with coefficients in $R$.

The homology $H_{\ast}(G/T;R)$, if considered ungraded, is isomorphic as an $R[W]$-module to the group ring of $W$, namely $R[W]$. This fact was proven in \cite[Proposition B.1]{bairdcohomology}, thus ignoring  the grading of the homology $H_{\ast}(G/T;R)$ in the proof of Theorem \ref{theorem: homology of x2g1}, as a $W$-module the homology is isomorphic to $R[W]$. Furthermore, as an ungraded module, there is an isomorphism 
$$H_*(G/T) \otimes_{R[W]} \T [V] \to  \T[V].$$ 
The next theorem follows.

\begin{thm}\label{theorem:ungraded homology of X2G}
Let $G$ be a compact, connected Lie group with maximal torus $T$ and Weyl group $W$. Then there is an isomorphism in ungraded homology
\[H_{\ast}^U( Comm(G)_1; R) \cong  \T_U [V] .\]
\end{thm}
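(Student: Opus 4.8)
The plan is to deduce Theorem~\ref{theorem:ungraded homology of X2G} directly from the graded computation of Theorem~\ref{theorem: homology of x2g1} by forgetting the grading and applying the known structure of $H_\ast(G/T;R)$ as an $R[W]$-module. Recall that Theorem~\ref{theorem: homology of x2g1} gives
\[
H_\ast(Comm(G)_1;R) \cong \bigl(H_\ast(G/T;R)\otimes_R \T[V]\bigr)_W,
\]
and that for any $R[W]$-modules $M,N$ one has a natural isomorphism of $R$-modules $(M\otimes_R N)_W \cong M\otimes_{R[W]} N$, where $W$ acts diagonally on the left and this identification sends the diagonal coinvariants to the balanced tensor product. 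So the first step is simply to rewrite the conclusion of Theorem~\ref{theorem: homology of x2g1} as $H_\ast(Comm(G)_1;R)\cong H_\ast(G/T;R)\otimes_{R[W]}\T[V]$, which is exactly the statement of Theorem~\ref{thm: homology of x2g} and is the natural bridge to the ungraded claim.

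Next I would invoke the input already cited in the paragraph preceding the theorem: by \cite[Proposition B.1]{bairdcohomology}, when the homology $H_\ast(G/T;R)$ is considered \emph{ungraded}, it is isomorphic as an $R[W]$-module to the regular representation $R[W]$ (this uses that $G/T$ has torsion-free homology of total rank $|W|$, by \cite{bott}, and that the $W$-representation on $H_\ast(G/T;\Q)$ is the regular representation — a classical fact from the Bruhat/Schubert cell structure). Then the key algebraic step is the standard identity, for any left $R[W]$-module $N$,
\[
R[W]\otimes_{R[W]} N \;\cong\; N,
\]
applied with $N = \T_U[V]$ (or with $N=\T[V]$ and then forgetting the grading on the tensor-algebra side as well — note that passing to ungraded homology on both $G/T$ and $J(T)$ commutes with the tensor product, so the ungraded homology of $G/T\times J(T)$ is $R[W]\otimes_R \T_U[V]$ as an $R[W]$-module). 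Combining: $H_\ast^U(Comm(G)_1;R)\cong \bigl(R[W]\otimes_R \T_U[V]\bigr)_W \cong R[W]\otimes_{R[W]}\T_U[V]\cong \T_U[V]$.

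The only genuine subtlety — and the step I expect to require the most care — is making sure the $W$-action bookkeeping is right: in Theorem~\ref{theorem: homology of x2g1} the coinvariants are taken for the \emph{diagonal} $W$-action on $H_\ast(G/T;R)\otimes_R \T[V]$, not a one-sided action, and the isomorphism $H_\ast(G/T;R)\cong R[W]$ must be one of $R[W]$-modules (left modules) in order for the collapse $R[W]\otimes_{R[W]} N\cong N$ to apply to the diagonal coinvariants. This is exactly what \cite[Proposition B.1]{bairdcohomology} supplies, so no new work is needed, but one should explicitly note that the ungraded homology isomorphism is $W$-equivariant and that the identification of diagonal coinvariants with the balanced tensor product is natural. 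Everything else — torsion-freeness of $H_\ast(J(T);R)$ and $H_\ast(G/T;R)$ so that the Künneth map is an isomorphism, and the identification of $H_\ast(J(T);R)$ with $\T[V]$ — has already been established in the proof of Theorem~\ref{theorem: homology of x2g1}. Hence the proof is essentially a two-line deduction once the module-theoretic facts are lined up, which matches the terse statement ``The next theorem follows'' in the excerpt.
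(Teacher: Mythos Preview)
Your proposal is correct and follows essentially the same route as the paper's proof: invoke Theorem~\ref{theorem: homology of x2g1}, replace ungraded $H_\ast(G/T;R)$ by $R[W]$ via \cite[Proposition~B.1]{bairdcohomology}, and then collapse $\bigl(R[W]\otimes_R \T_U[V]\bigr)_W \cong R[W]\otimes_{R[W]}\T_U[V]\cong \T_U[V]$. Your explicit discussion of the diagonal-coinvariants/balanced-tensor identification and the $W$-equivariance of the Baird isomorphism makes precise exactly the steps the paper leaves implicit.
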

\begin{proof}
From Theorem \ref{theorem: homology of x2g1} there is an isomorphism in homology given by
\[H_{\ast}( Comm(G)_1; R) \cong \big( H_{\ast}(G/T;R) \otimes_{R}  \T[V]\big)_W .\]
If all homology is ungraded, then there are isomorphisms in ungraded homology given by
\[
H_{\ast}^U( Comm(G)_1; R)  \cong \big( R[W] \otimes_{R}  \T_U[V]\big)_W \cong R[W] \otimes_{R[W]}  \T_U[V]   \cong \T_U[V].
\]
\end{proof}
This shows that as an abelian group, without the grading, the homology of $Comm(G)_1$ with coefficients in $R$ is the ungraded tensor algebra $\T_U[V]$. The following is an immediate corollary of Theorem \ref{theorem:ungraded homology of X2G}.

\begin{cor}
Let $G$ be a compact, connected Lie group with maximal torus $T$ and Weyl group $W$, such that  every abelian subgroup is contained in a path-connected abelian subgroup. Then there is an isomorphism in ungraded homology
\[H_{\ast}^U( Comm(G); R) \cong  \T_U [V] .\]
\end{cor}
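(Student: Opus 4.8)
The plan is to derive this statement as an immediate consequence of Theorem~\ref{theorem:ungraded homology of X2G}, whose conclusion $H_\ast^U(Comm(G)_1;R)\cong\T_U[V]$ is phrased for the distinguished path-component $Comm(G)_1$. The only point requiring attention is to identify $Comm(G)_1$ with all of $Comm(G)$ under the stated hypothesis.

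First I would observe that if every abelian subgroup of $G$ is contained in a path-connected abelian subgroup, then in particular every abelian subgroup of $G$ is contained in a maximal torus: the closure of a path-connected abelian subgroup of a compact connected Lie group is a connected compact abelian subgroup, hence a torus, hence contained in a maximal torus, and the subgroup in question is contained in its closure. Proposition~\ref{prop: connected Comm(G)} then applies and gives that $Comm(G)$ (and every $\Ho(\Z^n,G)$) is path-connected; equivalently $Comm(G)=Comm(G)_{1_G}=Comm(G)_1$.

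Having made this identification, I would simply invoke Theorem~\ref{theorem:ungraded homology of X2G} and substitute, obtaining $H_\ast^U(Comm(G);R)=H_\ast^U(Comm(G)_1;R)\cong\T_U[V]$, which is the assertion. There is no genuine obstacle: the entire content of the corollary lies in Theorem~\ref{theorem:ungraded homology of X2G} (which itself rests on Theorem~\ref{theorem: homology of x2g1} together with the fact from \cite{bairdcohomology} that, forgetting the grading, $H_\ast(G/T;R)$ is the regular representation $R[W]$, so that $R[W]\otimes_{R}\T_U[V]$ has $W$-coinvariants $R[W]\otimes_{R[W]}\T_U[V]\cong\T_U[V]$), combined with the elementary path-connectedness reduction above. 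One could alternatively bypass Proposition~\ref{prop: connected Comm(G)} by quoting the path-connectedness of each $\Ho(\Z^n,G)$ from \cite{adem2007commuting} and passing to the colimit defining $Comm(G)$, but routing through Proposition~\ref{prop: connected Comm(G)} is the cleanest formulation.
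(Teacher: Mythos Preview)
Your proposal is correct and matches the paper's treatment: the paper states this corollary as immediate from Theorem~\ref{theorem:ungraded homology of X2G}, having already recorded (just before the graded analogue) that under the hypothesis each $\Ho(\Z^n,G)$ is path-connected, whence $Comm(G)=Comm(G)_1$. Your route via Proposition~\ref{prop: connected Comm(G)} (after reducing the hypothesis to ``every abelian subgroup lies in a maximal torus'') is a harmless variant of the same reduction.
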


\section{An example given by $SO(3)$}\label{section: example SO(3)}

Consider the special orthogonal group $SO(3)$. The connected components of the space $Comm(SO(3))$
are given next. D. Sjerve and E. Torres-Giese \cite{giese.sjerve} showed that the space $\Ho(\Z^n,SO(3))$ has the following decomposition into path components
$$
\Ho(\Z^n,SO(3)) \approx \Ho(\Z^n,SO(3))_1 \bigsqcup \bigg( \bigsqcup_{N_n}  S^3/Q_8 \bigg),
$$
where $Q_8$ is the group of quaternions acting on the 3-sphere. $N_n$ is a finite positive integer depending on $n$ and equals 
$\frac{1}{6}\left(4^n+3\cdot 2^n +2 \right)$ if $n$ is even, and 
$\frac{2}{3}(4^{n-1}-1)-2^{n-1}+1$ otherwise. By definition it follows that 
$$
Comm(SO(3)) = \bigg(\bigsqcup_{n \geq 1} \big[ \Ho(\Z^n,SO(3))_1 \bigsqcup \big( \bigsqcup_{N_n}  S^3/Q_8 \big) \big] \bigg)/\sim,
$$
where $\sim$ is the relation in Definition \ref{definition: spaces of all commuting n-tuples}.

An $n$-tuple $(M_1,\dots , M_n)$ is in the connected component $\Ho(\Z^n,SO(3))_1$ if and only if all the matrices $M_1,\dots , M_n$ are rotations about the same axis. The $n$-tuple is in one of the components homeomorphic to $S^3/Q_8$ if and only if there are two matrices $M_i$ and $M_j$ that are rotations about orthogonal axes such that the other coordinates are equal to one of $M_i$, $M_j$, $M_i M_j$ or $1_G$, see \cite{giese.sjerve}. For any positive integer $n$ there is at least one $n$-tuple not in $\Ho(\Z^n,SO(3))_1$ with no coordinate the identity. Therefore, in the identifications above the number of copies of $S^3/Q_8$ increases with $n$ and there are infinitely many copies of $S^3/Q_8$ as connected components of $Comm(SO(3))$. So it follows that $Comm(SO(3))$ has the following path components
$$Comm(SO(3)) =  Comm(SO(3))_1 \bigsqcup \big( \bigsqcup_{\infty}  S^3/Q_8 \big).$$
See \cite{stafa.thesis} for more details. Note that Theorem \ref{thm:souped-up-Molien INTRO} gives information about the rational cohomology as well as the integral cohomology with $2$ inverted of $Comm(SO(3))_1$.

\section{The cases of $U(n)$, $SU(n)$, $Sp(n)$, and $Spin(n)$ }\label{section:Un.SUn}

Recall the map of spaces
$$\Theta: G\times_{NT}J(T) \to Comm(G)_{1},$$ 
which induces a map in singular homology which is an isomorphism under the conditions that $G$ is compact, simply-connected, and the order of the Weyl group $W$ for a maximal torus $T$ has been inverted in the coefficient ring (Theorem \ref{theorem: homology of x2g1}). The purpose of this section is to describe the ungraded homology of $Comm(G)$ for $G$ one of the groups $U(n)$, $SU(n)$, $Sp(n)$, and $Spin(n)$.

A maximal torus for $U(n)$ is of rank $n$ given by $$T = (S^1)^n.$$ Thus the ungraded homology of $Comm(U(n))$ has Poincar\'e series 
$$ \Hilb(Comm(U(n)),t)= 1/(2-(1+t)^n).$$ The analogous result for $SU(n)$ follows, that is, the Poincar\'e series is given by
$$\Hilb(Comm(SU(n)),t)=1/(2-(1+t)^{n-1}).$$
These series give the ungraded homology of $Comm(G)$ for $G = U(n)$ and $SU(n)$, respectively.

The case of $Sp(n)$ is analogous as a maximal torus is of rank $n$, so the Poincar\'e series for the ungraded 
homology of $Comm(Sp(n))$ with $n!$ inverted is 
$$\Hilb(Comm(Sp(n)),t)=1/(2-(1+t)^n).$$

The cases of $SO(n)$ and $Spin(n)$ breaks down classically into two cases as expected with $n = 2a \  \mbox{or} \ n = 2b+1$. 

First, the case of $$n = 2b+1.$$ The rank of a maximal torus (finite product of circles) is $b$. Thus the Poincar\'e series for the ungraded homology of $Comm(Spin(2b+1),t)$ with $n! = (2b+1)!$ inverted is 
$$\Hilb(Comm(Spin(2b+1)),t)=1/(2-(1+t)^b).$$

Second, the case of $$n = 2a.$$ The rank of a maximal torus (finite product of circles) is $a$. Thus the Poincar\'e series for the ungraded homology of $Comm(Spin(2a),t)$ with $n! = (2a)!$ inverted is 
$$\Hilb(Comm(Spin(2a)),t)=1/(2-(1+t)^a).$$


\section{Results for $G_2$, $F_4$, $E_6$, $E_7$ and $E_8$}\label{section:exceptional Lie groups}

In this section applications of earlier results concerning $Comm(G)$ are applied to the classical exceptional, simply-connected, simple compact Lie groups $G_2$, $F_4$, $E_6$, $E_7$ and $E_8$. In particular, the ungraded homology of $Comm(G)$ is given in these cases.

Recall the map 
$$\Theta: G\times_{NT}J(T) \to Comm(G).$$ 
In the special cases of this section, if the rank of the maximal torus is $r$, then a choice of maximal torus will be denoted $$T_r.$$ Thus the following is classical, see \cite{chevalley1999theory}. 
The first explicit determination of the Poincar\'e  polynomials of the exceptional simple Lie groups has been accomplished by Yen Chih-Ta \cite{yen.chih.ta}.

\begin{enumerate}
\item If $G = G_2$, then $r=2$, the Weyl group $W$ is the dihedral group of order $12$, and 
the cohomology with $6$ inverted is an exterior algebra on classes in degrees $3$, and $11$,
\item If $G = F_4$, then $r=4$, the Weyl group $W$ is the dihedral group of order $2^7\times 3^2=1,152$, and
and  the cohomology with $6$ inverted is an exterior algebra on classes in degrees $3,11,15,23$.
\item If $G = E_6$, then $r=6$, the Weyl group $W$ is $O(6, \mathbb F_2)$ of order $51,840$, and
and  the cohomology with $6$ inverted is an exterior algebra on classes in degrees $3,9,11,15,17,23$.
\item If $G = E_7$, then $r=7$, the Weyl group $W$ is $O(7,\mathbb F_2) \times \mathbb Z/2$ of order $2,903,040$, and
and the cohomology with $2\cdot3\cdot 5 \cdot 7$ inverted is an exterior algebra on classes in degrees $3, 11, 15, 19, 23, 27, 35$.
\item If $G = E_8$, then $r=8$, the Weyl group $W$ is a double cover of  $O(8,\mathbb F_2)$ of order $(2 ^{14})(3^5)(5^2)(7)$, and  the cohomology with $2 \cdot 3 \cdot 5\cdot 7$ inverted is an exterior algebra on classes in degrees $3,15,23,27,35,39, 47, 59$, and $11$,
\end{enumerate}

Finally, recall Theorem \ref{theorem:ungraded homology of X2G} restated as follows where
$H_{\ast}^U(X; R)$ denotes ungraded homology.
\begin{thm}
Let $G$ be one of the exceptional Lie groups $G_2, F_4, E_6, E_7, E_8$ with maximal torus $T$ and Weyl group $W$. Then there is an isomorphism in ungraded homology
\[H_{\ast}^U( Comm(G); R) \cong  \T_U [V] .\]
\end{thm}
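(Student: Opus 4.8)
The plan is to observe that the statement for the exceptional groups $G_2, F_4, E_6, E_7, E_8$ is nothing more than a direct application of Theorem \ref{theorem:ungraded homology of X2G}, once one checks that each of these groups satisfies the hypotheses of that theorem and that the component issue does not interfere. First I would recall that each of $G_2, F_4, E_6, E_7, E_8$ is a compact, connected Lie group with a maximal torus $T$ of rank $r$ (here $r = 2, 4, 6, 7, 8$ respectively) and a finite Weyl group $W$ whose order is recorded in the list preceding the theorem. Thus all the hypotheses of Theorem \ref{theorem:ungraded homology of X2G} are met with $R = \Z[1/|W|]$, and that theorem yields an isomorphism of ungraded $R$-modules
\[
H_{\ast}^U( Comm(G)_{1_G}; R) \cong \T_U[V],
\]
where $V = \widetilde{H}_\ast(T;R)$ is the reduced homology of the maximal torus.

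Next I would address why one may drop the subscript $1_G$, i.e.\ why $Comm(G) = Comm(G)_{1_G}$ for these groups. This follows from Proposition \ref{prop: connected Comm(G)}: if every abelian subgroup of $G$ is contained in a maximal torus, then $Comm(G)$ is path-connected. Each of $G_2, F_4, E_6, E_7, E_8$ is simply-connected, and for a compact simply-connected Lie group every abelian subgroup (indeed, every element, since it lies in some maximal torus by conjugacy of maximal tori) generates a subgroup contained in a maximal torus — more precisely the relevant fact, used already for $U(n)$, $SU(n)$, $Sp(n)$ in Section \ref{section:Un.SUn}, is that simple-connectivity forces centralizers of tori to be connected, so commuting tuples always lie in a common maximal torus. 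Hence $Comm(G)$ is path-connected and the isomorphism above reads $H_{\ast}^U(Comm(G);R) \cong \T_U[V]$, which is exactly the claim.

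The only genuinely substantive point, and therefore the step I would present most carefully, is the connectedness claim: one must be sure that for each exceptional simply-connected group $G$, every pair (and more generally every finite tuple) of commuting elements lies in a common maximal torus. I would invoke the classical fact (see \cite{adem2007commuting,bott}) that in a compact simply-connected Lie group the centralizer of any torus is connected, from which an easy induction on the number of coordinates shows that any commuting $n$-tuple lies in a maximal torus; alternatively one can cite directly that $\Ho(\Z^n, G)$ is path-connected for these $G$, a fact already used in Section \ref{section:Un.SUn} and attributed to \cite{adem2007commuting}. Given that, the remainder is bookkeeping: quoting Theorem \ref{theorem:ungraded homology of X2G} and noting that all of $G_2, F_4, E_6, E_7, E_8$ are compact and connected so the hypotheses are literally satisfied. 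I expect no calculational obstacle at all here — the content of this theorem is entirely that the exceptional groups are covered by the general machinery, and the proof is a one-line deduction from Theorem \ref{theorem:ungraded homology of X2G} together with the path-connectedness of $Comm(G)$ in the simply-connected case.
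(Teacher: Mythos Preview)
Your reduction to Theorem \ref{theorem:ungraded homology of X2G} is exactly what the paper does --- it simply introduces this theorem as a restatement of that one, with no further argument. However, the extra step you add, arguing that $Comm(G)$ is path-connected for $G_2, F_4, E_6, E_7, E_8$, is incorrect. Simple-connectivity of $G$ guarantees that the centralizer of any element is connected, which does show that commuting \emph{pairs} lie in a common maximal torus; but your induction breaks at the next step: the centralizer $Z_G(a)$, while connected, is typically not simply-connected, so $Z_{Z_G(a)}(b)$ need not be connected and a commuting triple $(a,b,c)$ need not lie in a common maximal torus. In fact every one of the five exceptional groups admits non-toral commuting triples (this is precisely the subject of Borel--Friedman--Morgan \cite{borel2002almost} and Kac--Smilga \cite{kac.smilga}, cited in the introduction), so $\Ho(\Z^n,G)$ and hence $Comm(G)$ are genuinely disconnected for these $G$.

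The appearance of $Comm(G)$ rather than $Comm(G)_1$ in the statement is a slip in the paper: the theorem is explicitly introduced as a restatement of Theorem \ref{theorem:ungraded homology of X2G}, which concerns $Comm(G)_1$, and the immediately following Corollary \ref{cor: rational.for.exceptional simple Lie groups} correctly reverts to $Comm(G)_1$ throughout. So the right conclusion is $H_\ast^U(Comm(G)_{1_G};R)\cong \T_U[V]$, obtained by direct citation of Theorem \ref{theorem:ungraded homology of X2G}; your connectedness paragraph should be dropped rather than repaired.
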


One consequence is that if homology groups are regarded as ungraded, then the copies of the group ring can be canceled, and this gives the ungraded homology of $Comm(G)$ in the cases of these exceptional Lie groups, which are recorded as follows.

\begin{cor}\label{cor: rational.for.exceptional simple Lie groups}
Let $G$ be one of $G_2, F_4, E_6, E_7, E_8$. The ungraded homology of the space $Comm(G)_{1}$ 
in these cases with coefficients in $\mathbb Z[1/|W|]$, where $|W|$ is the order of the Weyl group, is given as follows: The ungraded homology is isomorphic to $$\T[\widetilde{H}_*(T)]$$
where $T$ is a maximal torus. For each of the groups, the Hilbert-Poincar\'e series is given as follows
\begin{enumerate}
\item If $G = G_2$, then the ungraded homology of $Comm(G)_1$
is the tensor algebra $\T[\widetilde{H}_*((S^1)^2)]$, and has Hilbert-Poincar\'e series $$1/(1-2t-t^2).$$

\item If $G = F_4$, then the ungraded homology of $Comm(G)_1$
is the tensor algebra $\T[\widetilde{H}_*((S^1)^4)]$, and has Hilbert-Poincar\'e series $$1/(2-(1+t)^4).$$

\item If $G = E_6$, then the ungraded homology of $Comm(G)_1$
is the tensor algebra $\T[\widetilde{H}_*((S^1)^6)]$, and has Hilbert-Poincar\'e series $$1/(2-(1+t)^6).$$

\item If $G = E_7$, then the ungraded homology of $Comm(G)_1$
is the tensor algebra $\T[\widetilde{H}_*((S^1)^7)]$, and has Hilbert-Poincar\'e series $$1/(2-(1+t)^7).$$

\item If $G = E_8$, then the ungraded homology of $Comm(G)_1$
is the tensor algebra $\T[\widetilde{H}_*((S^1)^8)]$, and has Hilbert-Poincar\'e series $$1/(2-(1+t)^8).$$
\end{enumerate}
\end{cor}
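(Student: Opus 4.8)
The plan is to combine the restatement of Theorem~\ref{theorem:ungraded homology of X2G} given immediately above the corollary with the elementary computation of the Hilbert--Poincar\'e series of a tensor algebra on the reduced homology of a torus carried out in Section~\ref{section: James construction}; the argument is purely a matter of assembling already-established facts and specializing to five groups.

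First I would invoke Theorem~\ref{theorem:ungraded homology of X2G}: for any compact, connected Lie group $G$ with maximal torus $T$ and Weyl group $W$, and with $R=\Z[1/|W|]$, there is an isomorphism of ungraded $R$-modules
\[
H_*^U(Comm(G)_1;R)\cong \T_U[V],\qquad V=\widetilde{H}_*(T;R).
\]
Each of $G_2, F_4, E_6, E_7, E_8$ is compact and connected, so this applies verbatim, with $R=\Z[1/|W|]$ for the Weyl-group orders listed in the enumeration of Section~\ref{section:exceptional Lie groups}. This already yields the first assertion of the corollary, namely that the ungraded homology of $Comm(G)_1$ is the tensor algebra on $\widetilde{H}_*(T)$.

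Next I would record the ranks. For a compact connected Lie group a maximal torus is a product of circles $T=T_r=(S^1)^r$ with $r$ the rank, and classically (see \cite{chevalley1999theory}) one has $r=2$ for $G_2$, $r=4$ for $F_4$, $r=6$ for $E_6$, $r=7$ for $E_7$, and $r=8$ for $E_8$. Thus $V=\widetilde{H}_*((S^1)^r;R)$, and since the homology of $(S^1)^r$ is $R$-free, the Bott--Samelson theorem recalled in Section~\ref{section: James construction} gives $\T[V]\cong H_*(J((S^1)^r);R)$ as graded algebras. Applying the Hilbert--Poincar\'e series formula $\Hilb(J(X),t)=1/(2-\Hilb(X,t))$ together with $\Hilb((S^1)^r,t)=(1+t)^r$ from Example~\ref{ex:ungraded homology of X2G} yields
\[
\Hilb(\T[V],t)=\Hilb(J((S^1)^r),t)=\frac{1}{2-(1+t)^r}.
\]

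Finally I would specialize $r$. For $r=2$ one simplifies $2-(1+t)^2=1-2t-t^2$, giving the series $1/(1-2t-t^2)$ of item (1); the cases $r=4,6,7,8$ give $1/(2-(1+t)^r)$ directly, which are the series of items (2)--(5). I do not expect a genuine obstacle: the whole proof is bookkeeping on top of Theorem~\ref{theorem:ungraded homology of X2G} and the James/Bott--Samelson machinery of Section~\ref{section: James construction}, and the only points requiring care are the correct ranks and Weyl-group orders. (One may note that the exterior-algebra presentations of $H^*(G)$ in the enumeration are irrelevant for this ungraded statement, and that since the corollary concerns only the identity component $Comm(G)_1$, no connectedness hypothesis on $Comm(G)$ itself is needed.)
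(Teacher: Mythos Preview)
Your proposal is correct and follows exactly the paper's approach: the corollary is recorded in the paper without a formal proof, as a direct consequence of Theorem~\ref{theorem:ungraded homology of X2G} together with the Hilbert--Poincar\'e series computation for $J((S^1)^r)$ from Section~\ref{section: James construction} (Example~\ref{ex:ungraded homology of X2G}), specialized to the known ranks $r=2,4,6,7,8$. Your write-up simply makes explicit the bookkeeping the paper leaves to the reader.
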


\appendix

\section{Proof of Theorem \ref{thm:souped-up-Molien INTRO}}\label{appendix: proof of proposition}

\begin{center}
{V. Reiner
}\end{center}

Let $W$ be a finite subgroup of $GL_n(\RR)$ generated by reflections acting on $\RR^n$. Then $W$ also acts in a grade-preserving fashion 
on the polynomial algebra 
$$
R=\RR[x_1,\ldots,x_n],
$$ 
where $x_1,\ldots,x_n$ are a basis for the dual space $(\RR^n)^*$.
The theorem of Shephard-Todd and Chevalley \cite[\S 4.1]{broue.reflextion.gps}
asserts that the subalgebra of $W$-invariant polynomials is again a polynomial algebra
$$
R^W=\RR[f_1,\ldots,f_n].
$$  
One can choose the $f_1,\ldots,f_n$ homogeneous, say with degrees
$d_1,\ldots,d_n$.  For example, when $W$ is the symmetric group $S_n$ permuting coordinates in $\RR^n$, 
one can choose $f_i=e_i(x_1,\ldots,x_n)$ 
the {\it elementary symmetric functions}, 
and one has $(d_1,\ldots,d_n)=(1,2,\ldots,n)$.

The usual grading conventions for the cohomology of a topological space requires that all degrees $d_j$ be doubled in the formulas here.  For example, in the case of $G= U(n)$ with $W$ given by the symmetric group on $n$-letters, the value $d_j = j$,  but the homological degree of the $j$-th Chern class is degree $2j$.

Then $R^W$ has the following $\NN$-graded Hilbert series in the variable $q$:
\begin{equation}
\label{polynomial-invariants-hilb}
\Hilb(R^W,q):=\sum_{i=0}^\infty \dim_\RR(R^W_i)\,\, q^{i} 
=\prod_{j=1}^n \frac{1}{1-q^{2d_j}}.
\end{equation}

The {\it coinvariant algebra} is the quotient ring 
$$
C:=R/(f_1,\ldots,f_n)=R/(R^W_+),
$$
which also carries a grade-preserving $W$-action.  

In addition, we consider the $W$-action on the
exterior algebra $E=\wedge \RR^n$,
the reduced exterior algebra $\barE:=\bigoplus_{k=1}^n \wedge^k \RR^n$,
and on the $\RR$-dual $\TTT^*[\barE]$ of the tensor 
algebra over $\RR$ on $\barE$.
These will have their own separate $\NN^2$-bi-grading 
so that the $\RR$-dual of $\wedge^k \RR^n$ lies in bidegree $(k,1)$,
and the $\RR$-dual of
$\wedge^{k_1} \RR^n \otimes \cdots \otimes \wedge^{k_m} \RR^n$ lies in
the bidegree $(k_1+\cdots+k_m,m)$ of $\TTT^*[\barE]$.
Thus the diagonal action of $W$ 
on the tensor product $C \otimes \TTT^*[\barE]$ over $\RR$
respects an {\it $\NN^3$-tri-grading}.  Its $W$-fixed space
has trigraded Hilbert series defined by
$$
\begin{aligned}
&\Hilb\left( \left(C \otimes \TTT^*[\barE] \right)^W , q, s, t \right) \\
&\quad := \sum_{i,m=0}^\infty \sum_{\substack{(k_1,\ldots,k_m) \\ \text{in }\{1,2,\ldots\}^m} }
     \dim_\RR \left( C^i \otimes 
      \left(\wedge^{k_1} \RR^n \otimes \cdots \otimes \wedge^{k_m} \RR^n\right)^* 
         \right)^W  
q^{i} s^{k_1+\cdots+k_m} t^m.
\end{aligned}
$$

In the next theorem, the exponent in $q^{2i}$ is doubled as these degrees correspond to the topological degrees in the invariant algebra.

\begin{thm}\label{thm:souped-up-Molien}
If $G$ is a compact, connected Lie group with maximal torus $T$, and Weyl group $W$, then
$$
\begin{aligned}
&\Hilb\left( \left(C^* \otimes \TTT^*[\barE] \right)^W , q, s, t \right) \\
&\quad =\frac{ \prod_{i=1}^n (1-q^{2d_i}) }{ |W| }\sum_{w \in W} \frac{1}{\det(1-q^2w) \left( 1-t(\det(1+sw)-1) \right)}.
\end{aligned}
$$

\end{thm}
\begin{proof}
For any (ungraded) finite-dimensional $W$-representation $X$ over $\RR$, 
one has an isomorphism \cite[(4.5)]{broue.reflextion.gps} of $\NN$-graded $\RR$-vector spaces 
\begin{equation}
\label{Broue-isomorphism}
\left( R \otimes X \right)^W
\cong R^W \otimes \left( C^* \otimes X \right)^W.
\end{equation}
If the $W$-action respects an additional $\NN^2$-grading on $X=\oplus_{(j,m)} X_{j,m}$, 
separate from the $\NN$-grading on $R$, then 
\eqref{Broue-isomorphism} becomes an
isomorphism of $\NN^3$-trigraded $\RR$-vector spaces.  Consequently, one has
\begin{equation}
\label{tensor-consequence}
\begin{aligned}
\Hilb\left( \left( C^* \otimes X \right)^W, q,s,t\right) 
&=
\frac{ 
\Hilb\left( \left( R \otimes X \right)^W, q,s,t\right) 
}
{ 
\Hilb( R^W, q) 
}\\
&=
\prod_{i=1}^n (1-q^{2d_i}) \cdot
\Hilb\left(( \left( R \otimes X \right)^W, q,s,t\right) 
\end{aligned}
\end{equation}
using \eqref{polynomial-invariants-hilb} for the last equality.

Next note that 
\begin{equation}
\label{bigraded-trace-consequence}
\begin{aligned}
\Hilb\left( \left( R \otimes X \right)^W, q,s,t\right)
&=\sum_{i,j,m = 0}^{\infty}
  \dim_\RR \left( R_i \otimes X_{j,m} \right)^W  q^{i} s^j t^m \\
&=\frac{1}{|W|} \sum_{w \in W}
\sum_{i,j,m = 0}^{\infty}
  \Trace\left(w|_{R_i \otimes X_{j,m}} \right) q^{i} s^j t^m
\end{aligned}
\end{equation}
as $\pi: v \longmapsto 1/|W| \sum_{w \in W} w(v)$ is
an idempotent projection onto the $W$-fixed subspace $V^W$ of any
$W$-representation $V$, so 
the trace of $\pi$ is $\dim_\RR(V^W)$.
Taking $X=\TTT^*[\barE]$ in \eqref{tensor-consequence} and \eqref{bigraded-trace-consequence},  the theorem follows
from this claim: for any $w$  in $W$,
\begin{equation}
\label{trace-claim}
\sum_{i,j,m = 0}^{\infty}
  \Trace\left(w|_{R_i \otimes \TTT^*[\barE]_{j,m}} \right) q^{i} s^j t^m =
    \frac{1}{\det(1-q^2w) \left( 1-t(\det(1+sw)-1) \right)}
\end{equation}
To prove the claim \eqref{trace-claim}, start with the facts \cite[Example 3.25]{broue.reflextion.gps} that 
\begin{eqnarray}
\label{polynomials-graded-trace}
\sum_{i=0}^\infty 
  \Trace\left(w|_{R_i} \right) q^{i}  
\displaystyle
&=& \frac{1}{\det(1-q^2w^{-1})}
= \frac{1}{\det(1-q^2w)}, \\
\label{exterior-graded-trace}
\sum_{j=0}^n
  \Trace\left(w|_{\wedge^j \RR^n} \right) s^j  
&=&\det(1+sw)=\det(1+sw^{-1}).
\end{eqnarray}
The rightmost equalities in 
\eqref{polynomials-graded-trace},\eqref{exterior-graded-trace}
arise because $w$ acts orthogonally on $\RR^n$,
forcing $w, w^{-1}$ to have the same eigenvalues with multiplicities, 
From \eqref{exterior-graded-trace} one deduces that
$$
\sum_{j=1}^n
  \Trace\left(w|_{\wedge^j \RR^n} \right) s^j 
= \det(1+sw)-1.
$$
Since (the dual of) $\wedge^j \RR^n$ 
lies in the bidegree $\TTT^*[\barE]_{j,1}$, 
this implies
\begin{equation}
\label{tensors-graded-trace}
\begin{aligned}
\sum_{j,m=0}^\infty
  \Trace\left( w|_{\TTT^*[\barE]_{j,m}} \right) s^j t^m  
&= \frac{1}{1-t\left(\det(1+sw^{-1})-1\right)} \\
&= \frac{1}{1-t\left(\det(1+sw)-1\right)}.
\end{aligned}
\end{equation}
Consequently the claim \eqref{trace-claim} follows from
\eqref{polynomials-graded-trace}
and \eqref{tensors-graded-trace}, since
graded traces are multiplicative on
graded tensor products.  This completes the proof.
\end{proof}

\section{acknowledgments}
The authors thank V. Reiner for including the appendix in this paper along with his formula giving the Hilbert-Poincar\'e series for $Comm(G)$.


\begin{thebibliography}{10}

\bibitem{adams.lectures.on.lie.groups}
J.~Adams, \emph{Lectures on lie groups}, Midway Reprints, University of Chicago
  Press, 1969.

\bibitem{adem2007commuting}
A.~Adem and F.~R. Cohen, \emph{Commuting elements and spaces of homomorphisms},
  Math. Ann. \textbf{338} (2007), no.~3, 587--626.

\bibitem{adem.cohen.gomez}
A.~Adem, F.~R. Cohen, and J.~G{\'o}mez, \emph{Stable splittings, spaces of
  representations and almost commuting elements in {L}ie groups}, Math. Proc.
  Cambridge Philos. Soc. \textbf{149} (2010), no.~3, 455--490. 

\bibitem{adem.cohen.gomez2}
A.~Adem, F.~R. Cohen, and J.~G{\'o}mez, \emph{Commuting elements in central products of special unitary
  groups}, Proc. Edinb. Math. Soc. (2) \textbf{56} (2013), no.~1, 1--12.


\bibitem{fredb2g}
A.~Adem, F.~R. Cohen, and E.~Torres-Giese, \emph{{Commuting elements,
  simplicial spaces, and filtrations of classifying spaces}}, Math. Proc. Camb.
  Phil. Soc \textbf{152} (2011), no.~1, 91--114.

\bibitem{adem.gomez}
A.~Adem and J.~G{\'o}mez, \emph{{On the structure of spaces of commuting
  elements in compact Lie groups}}, Configuration spaces: Geometry,
  Combinatorics and Topology \textbf{14} (2013), 1--26.

\bibitem{adem.gomez3}
A.~Adem and J.~G{\'o}mez, \emph{A classifying space for commutativity in {L}ie
  groups}, Alg. Geom. Topol. \textbf{15} (2015), no.~1, 493--535.

\bibitem{adem.gomez2}
A.~Adem and J.~M. G{\'o}mez, \emph{Equivariant {$K$}-theory of compact {L}ie
  group actions with maximal rank isotropy}, J. Topol. \textbf{5} (2012),
  no.~2, 431--457. 

\bibitem{bairdcohomology}
T.~Baird, \emph{Cohomology of the space of commuting {$n$}-tuples in a compact
  {L}ie group}, Algebr. Geom. Topol. \textbf{7} (2007), 737--754. 
  
\bibitem{bairdsu2}
T.~Baird, L.~C Jeffrey, and P.~Selick, \emph{{The space of commuting n-tuples
  in SU(2)}}, Illinois J. Math. \textbf{55} (2011), no.~3, 805--813.

\bibitem{bergeron}
M.~Bergeron, \emph{The topology of nilpotent representations in reductive
  groups and their maximal compact subgroups}, {Geom. Topol.} \textbf{19}
  (2015), 1383--–1407.

\bibitem{borel2002almost}
A~Borel, R.~Friedman, and J.~Morgan, \emph{{Almost Commuting Elements in
  Compact Lie Groups}}, Mem. Amer. Math. Soc., no. 747, AMS, 2002.

\bibitem{bott}
R.~Bott, \emph{{On torsion in Lie groups}}, Proceedings of the National Academy
  of Sciences of the United States of America \textbf{40} (1954), no.~7, 586.

\bibitem{bridson.h.k}
M.~R. Bridson, P.~De~La~Harpe, and V.~Kleptsyn, \emph{{The Chabauty space of
  closed subgroups of the three-dimensional Heisenberg group}}, Pacific J.
  Math. \textbf{240} (2009), no.~1, 1--48.

\bibitem{broue.reflextion.gps}
M.~Brou{\'e}, \emph{Introduction to complex reflection groups and their braid
  groups}, Lecture Notes in Mathematics, vol. 1988, Springer-Verlag, Berlin,
  2010. 

\bibitem{chevalley1999theory}
C.~Chevalley, \emph{{Theory of Lie Groups}}, Princeton Landmarks in Mathematics
  and Physics, no. v. 1, Princeton University Press, 1999.

\bibitem{yen.chih.ta}
Y.~{Chih-Ta}, \emph{{Sur les polyn\^omes de Poincar\'e des groupes simples
  exceptionnels.}}, {C. R. Acad. Sci., Paris} \textbf{228} (1949), 628--630
  (French).

\bibitem{goldman}
W.~Goldman, \emph{Topological components of spaces of representations}, Invent.
  Math. \textbf{93} (1988), no.~3, 557--607 (English).

\bibitem{gomez.pettet.souto}
J.~G{\'o}mez, A.~Pettet, and J.~Souto, \emph{On the fundamental group of {${\rm
  Hom}({\Bbb Z}^k,G)$}}, Math. Z. \textbf{271} (2012), no.~1-2, 33--44.

\bibitem{hatcher2002algebraic}
A.~Hatcher, \emph{Algebraic topology}, Cambridge University Press, 2002.

\bibitem{james1955reduced}
I.~James, \emph{Reduced product spaces}, Ann Math \textbf{62} (1955), no.~1,
  170--197.

\bibitem{kac.smilga}
V.~Kac and A.~Smilga, \emph{{Vacuum structure in supersymmetric Yang-Mills
  theories with any gauge group}}, The many faces of the superworld (2000),
  185--234.

\bibitem{ramras}
S.~Lawton and D.~Ramras, \emph{Covering spaces of character varieties}, New
  York J. Math \textbf{21} (2015), 383--416.

\bibitem{may.generalized.splitting.thm}
J.~P. May and L.~R. Taylor, \emph{Generalized splitting theorems}, Math. Proc.
  Camb. Phil. Soc, vol.~93, Cambridge Univ Press, 1983, pp.~73--86.

\bibitem{okay}
C.~Okay, \emph{Homotopy colimits of classifying spaces of abelian subgroups of
  a finite group}, Alg. Geom. Top. \textbf{14} (2014), no.~4, 2223--2257.

\bibitem{pettet.souto}
A.~Pettet and J.~Souto, \emph{Commuting tuples in reductive groups and their
  maximal compact subgroups}, Geom. Topol. \textbf{17} (2013), no.~5,
  2513--2593.

\bibitem{giese.sjerve}
D.~Sjerve and E.~Torres-Giese, \emph{Fundamental groups of commuting elements
  in {L}ie groups}, Bull. Lond. Math. Soc. \textbf{40} (2008), no.~1, 65--76.

\bibitem{stafa.thesis}
M.~Stafa, \emph{{On Polyhedral Products and Spaces of Commuting Elements in Lie
  Groups}}, Ph.D. thesis, University of Rochester, 2013.

\bibitem{witten1}
E.~{Witten}, \emph{{Constraints on supersymmetry breaking}}, Nuclear Phys. B
  \textbf{202} (1982), 253--316.

\bibitem{witten2}
E.~Witten, \emph{Toroidal compactification without vector structure}, J. High
  Energy Phys. \textbf{1998} (1998), no.~02, 006.

\end{thebibliography}
\end{document}